\documentclass[12pt,leqno]{amsart}
\usepackage{enumerate}
\usepackage{amsrefs}
\usepackage{amsfonts, amsmath, amssymb, amscd, amsthm, bm, cancel}
\usepackage{url}
\usepackage{graphicx}
\usepackage[
linktocpage=true,colorlinks,citecolor=magenta,linkcolor=blue,urlcolor=magenta]{hyperref}
\usepackage{multicol}
\usepackage{comment}
\usepackage[margin=1in]{geometry}
\makeatletter
\@namedef{subjclassname@2020}{\textup{2020} Mathematics Subject Classification}
\makeatother
\theoremstyle{plain} \newtheorem{thm}{Theorem}[section]
\theoremstyle{plain} \newtheorem{prop}[thm]{Proposition}
\theoremstyle{plain} \newtheorem{prob}[thm]{Problem}
\theoremstyle{definition} 
\theoremstyle{plain} \newtheorem{cor}[thm]{Corollary}
\theoremstyle{plain} \newtheorem{lem}[thm]{Lemma}
\theoremstyle{plain} 
\theoremstyle{remark} \newtheorem{rmk}[thm]{Remark}
\theoremstyle{plain} \newtheorem{conj}{Conjecture}
 
 \newtheorem*{ack}{Acknowledgments}

 \numberwithin{equation}{section}
 \allowdisplaybreaks

\newcommand{\bR}{\mathbb{R}}

\newcommand{\bS}{\mathbb{S}}
\newcommand{\bB}{\mathbb{B}}
\newcommand{\cK}{\mathcal{K}}
\newcommand{\cH}{\mathcal{H}}

\newcommand{\tr}{\mathrm{tr}}

\newcommand{\dist}{\mathrm{dist}}
\newcommand{\divg}{\mathrm{div}}
\newcommand{\Id}{\mathrm{Id}}
\newcommand{\II}{\mathrm{II}}

\newcommand{\tV}{\widetilde V}
\newcommand{\tC}{\widetilde C}

\newcommand{\dd}{\,d}
\newcommand{\R}{\mathbb{R}}
\newcommand{\HS}{\mathrm{HS}}
\begin{document}

\title[Dual quermassintegrals beyond the volume index]{On the Brunn-Minkowski inequality for $q$-th dual quermassintegrals with $q>n$}

\author{Haizhong Li}
\address{Department of Mathematical Sciences, Tsinghua University, Beijing 100084, P.R. China}
\email{\href{mailto:lihz@tsinghua.edu.cn}{lihz@tsinghua.edu.cn}}

\author{Yao Wan}
\address{Department of Mathematics, The Chinese University of Hong Kong, Shatin, N.T., Hong Kong}
\email{\href{mailto:yaowan@cuhk.edu.hk}{yaowan@cuhk.edu.hk}}

\author{Yijia Zhang}
\address{Department of Mathematical Sciences, Tsinghua University, Beijing 100084, P.R. China}
\email{\href{mailto:zhang-yj23@mails.tsinghua.edu.cn}{zhang-yj23@mails.tsinghua.edu.cn}}

\keywords{Dual quermassintegral, Brunn-Minkowski inequality, unconditional convex body, moment of inertia, Reilly formula}
\subjclass[2020]{52A40; 52A20; 26D15}


\begin{abstract}
In this paper, we study the Brunn-Minkowski inequality for $q$-th dual quermassintegrals with $q>n$. This problem was recently posed by Sadovsky and Zhang. First, by a second variation argument and a dimension reduction construction, we show that the inequality fails for arbitrary convex bodies when $q>n$, and fails even in the origin-symmetric class when $q>n+2$. Secondly, we prove the endpoint case $q=n+2$ for origin-symmetric convex bodies via Hadwiger's inequality for the polar moment of inertia. Finally, for unconditional convex bodies, we establish the inequality in the full range $0<q\le n+1$ by using a singular weighted Reilly formula and a coordinate-slice Hardy inequality. As applications, we derive several uniqueness results for the corresponding dual curvature measures.
\end{abstract}

\maketitle

\section{Introduction}\label{sec-1}

The Brunn-Minkowski inequality is a cornerstone of convex geometry. 
It originated from the work of Brunn and Minkowski, and it later became the starting point of the classical Brunn-Minkowski theory. In its classical form, the Brunn-Minkowski 
inequality states that for convex bodies $K,L\subset \bR^n$,
\begin{align}\label{classical-BM}
    |K+L|^{\frac1n}\geq |K|^{\frac1n}+|L|^{\frac1n},
\end{align}
where $K+L$ is the Minkowski sum. Equality holds if and only if $K$ and $L$ are homothetic.

The dual Brunn-Minkowski theory, initiated by Lutwak \cite{Lut75} in the 1970s, is based on the radial addition of star bodies and on dual mixed volumes. It has become an important part of convex geometry, especially through its connections with intersection bodies, the Busemann-Petty problem, and geometric tomography; see, e.g., \cite{Lut88,Gar06,Kold05,Zha99}.
If $K_1,\ldots,K_n\subset \bR^n$ are convex bodies containing the origin in their interiors, their dual mixed volume is defined by
\begin{align}\label{def-dual-mixed-volume}
    \widetilde V(K_1,\ldots,K_n)
    =\frac1n\int_{\bS^{n-1}}\rho_{K_1}(u)\cdots \rho_{K_n}(u)\dd u,
\end{align}
where $\mathbb{S}^{n-1}$ is the $(n-1)$-dimensional unit sphere, and $\rho_K(u)=\max\{\lambda>0:\lambda u\in K\}$ is the radial function of $K$.
Taking $K_1=\cdots=K_j=K$ and $K_{j+1}=\cdots=K_n=\bB^n_2$ gives the $j$-th dual quermassintegral
\[
    \widetilde V_j(K)=\frac1n\int_{\bS^{n-1}}\rho_K(u)^j\dd u,
    \qquad j=0,1,\ldots,n.
\]

Note that usual quermassintegrals can be represented by averages of projection volumes, whereas dual quermassintegrals can be represented by averages of section volumes,
\[
    \widetilde V_j(K)
    =
    \frac{\omega_n}{\omega_j}
    \int_{G_{n,j}} |K\cap \xi|_j\,d\xi,
    \qquad j=1,\ldots,n.
\]
In recent years, the dual Brunn-Minkowski theory has developed further through the introduction of dual curvature measures and their Minkowski problems by Huang, Lutwak, Yang, and Zhang \cite{HLYZ16}; see also the survey \cite{HYZ25}.

For any real number $q$, the index of the dual quermassintegral can be extended by setting
\begin{align}\label{def-dual-q}
    \tV_q(K)=\frac1n\int_{\bS^{n-1}}\rho_K(u)^q\dd u.
\end{align}
When $q>0$, polar coordinates give the equivalent formula
\begin{align}\label{polar-dual-q}
    \tV_q(K)=\frac qn\int_K |x|^{q-n}\dd x.
\end{align}
Thus $\tV_q$ is $q$-homogeneous, i.e. $\tV_q(tK)=t^q\tV_q(K)$ for $t>0$. The case $q=n$ is the ordinary volume, namely $\tV_n(K)=|K|$.

It is therefore natural to ask whether a Brunn-Minkowski inequality holds for $\tV_q$ with respect to Minkowski addition
\begin{align}\label{BM-dual}
    \tV_q(K+L)^{\frac1q}\geq \tV_q(K)^{\frac1q}+\tV_q(L)^{\frac1q}.
\end{align}
When $q=n$, this is exactly the classical Brunn-Minkowski inequality 
\eqref{classical-BM}. For arbitrary convex bodies containing the origin in their interiors, Xi and Zhang \cite{XZ22} proved \eqref{BM-dual} in the range $0<q\leq 1$. More generally, they established $L_p$ Brunn-Minkowski inequalities for dual quermassintegrals in the range $p\geq q$, and obtained corresponding uniqueness results for $(p,q)$-dual curvature measures.

For origin-symmetric convex bodies, Lutwak conjectured that \eqref{BM-dual} holds for the integer range $2\leq q\leq n-1$. This conjecture was later stated in the framework of the dual Minkowski problem; see, for example, \cite{HLYZ16,HYZ25}. Recently, Sadovsky and Zhang \cite{SZ25}, building on the analytic works of Kolesnikov-Milman \cite{KM18,KM22}, 
Kolesnikov-Livshyts \cite{KL21}, and Cordero-Erausquin-Rotem \cite{CER23}, 
proved the following strengthened form.

\begin{thm}[\cite{SZ25}]\label{thm-SZ}
Let $K,L\subset \bR^n$ be origin-symmetric convex bodies. Then for $0<q\leq n$,
\begin{align}\label{SZ-BM}
    \tV_q(K+L)^{\frac1q}\geq \tV_q(K)^{\frac1q}+\tV_q(L)^{\frac1q}.
\end{align}
Equality holds if and only if $K$ and $L$ are dilates of each other.
\end{thm}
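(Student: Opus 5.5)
The argument I would give is variational: reduce the inequality to a local (second-variation) statement and then prove that local statement with a weighted Poincaré/Reilly-type estimate. This follows the Kolesnikov--Milman route to the log-Brunn--Minkowski problem.

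\textbf{Step 1 (reduction to concavity).} The functional $\tV_q$ is continuous in the Hausdorff metric on bodies containing the origin. By approximation, it therefore suffices to treat $K,L$ smooth, origin-symmetric and with positive curvature. By $q$-homogeneity and the standard normalization argument, \eqref{SZ-BM} is equivalent to concavity of
\[
t\mapsto \tV_q\bigl((1-t)K+tL\bigr)^{1/q}\qquad\text{on }[0,1].
\]
Along the Minkowski combination, the support function $h_t=(1-t)h_K+th_L$ is affine in $t$. So it is enough to show that for every smooth even $\varphi$ on $\bS^{n-1}$,
\[
q\,\tV_q(K)\,\delta^2\tV_q(K)[\varphi]\le (q-1)\bigl(\delta\tV_q(K)[\varphi]\bigr)^2,
\]
where $\delta$ and $\delta^2$ are the first and second variations in the direction $h_K+s\varphi$. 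Here $\varphi$ even is the only place symmetry enters.

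\textbf{Step 2 (Euclidean formulation).} Using \eqref{polar-dual-q}, write $\tV_q(K)=\frac qn\int_K|x|^{q-n}dx$ with density $\mu_q=|x|^{q-n}dx$. The first variation is $\frac qn\int_{\partial K}\varphi(\nu)|x|^{q-n}$. Extend $\varphi$ into $K$ as in Kolesnikov--Milman: let $u$ solve the weighted Neumann problem $L_\mu u=\mathrm{const}$, $\partial_\nu u=\varphi(\nu)$, where $L_\mu=\Delta+\nabla\log|x|^{q-n}\cdot\nabla$. The second variation then becomes a boundary integral involving $\II^{-1}$. Applying the weighted Reilly formula for $\mu_q$ turns the desired inequality into a Poincaré-type inequality: for $u$ even with $\int_K L_\mu u\,d\mu_q$ prescribed,
\[
\int_K \|\nabla^2u\|_{\HS}^2-\mathrm{Ric}_\mu(\nabla u,\nabla u)\,d\mu_q \ \ge\ \frac{q-1}{q}\,\frac{\bigl(\int_K L_\mu u\,d\mu_q\bigr)^2}{\mu_q(K)}+\cdots.
\]
The Bakry--\'Emery tensor of $|x|^{q-n}$ is $\mathrm{Ric}_\mu=(n-q)\bigl(\frac{\Id}{|x|^2}-\frac{2x\otimes x}{|x|^4}\bigr)$. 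This tensor is not signed, and it is singular at the origin when $q<n$.

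\textbf{Step 3 (the hard spectral estimate).} The main obstacle is establishing this inequality for even $u$ with the singular, indefinite weight. My plan is to decompose $u=\frac{c}{2}|x|^2+v$ after projecting out the linear part. Odd functions are excluded by symmetry, which is exactly the role of origin symmetry. I would then control the negative term $-2(n-q)|x|^{-4}\langle x,\nabla v\rangle^2$ with a Hardy inequality on the symmetric body $K$. The Hardy constant $(n-2)^2/4$ needs to match the available room, which I expect to work precisely for $q\le n$. Alternatively, I would follow the Cordero-Erausquin--Rotem approach of viewing $\mu_q$ as a limit of radial measures. In that approach $|x|^{q-n}$ is $s$-concave in the generalized sense, and one uses their local $L^2$ estimate for even functions, extended via the Kolesnikov--Livshyts dimensional improvement.

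\textbf{Step 4 (equality).} If equality holds, the second variation must vanish along the segment. The equality case of the Reilly/Hardy step forces $\nabla^2u$ to be a multiple of $\Id$. Hence $\varphi$ is a multiple of $h_K$, and $h_L$ is proportional to $h_K$, so $K$ and $L$ are dilates.
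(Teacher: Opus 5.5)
Your Steps 1--2 set up the right framework. They mirror Lemma~\ref{lem-equivalent} and the Reilly argument the paper uses for Theorem~\ref{thm-unconditional}; the paper does not reprove this theorem, it quotes it from \cite{SZ25}. The genuine gap is Step~3, which is the entire content of the theorem. You only sketch two options for it, and neither works as described.

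\textbf{Route (a).} A Hardy inequality needs a vanishing condition, and evenness of $u$ gives only $u_i(0)=0$ at a single point. Along rays, the resulting inequality $\int_0^\rho g'(r)^2r^{q-1}\dd r\ge c\int_0^\rho g(r)^2r^{q-3}\dd r$, with $g(0)=0$ and a free outer endpoint, has best constant $c=0$ when $q\ge2$ (test $g=r^\gamma$ with $\gamma\downarrow0$). The extra angular constant available for odd functions requires whole spheres $\partial B_r$ inside the domain, i.e.\ a ball. Also, $(n-2)^2/4$ is the unweighted constant; for $|x|^{q-n}\dd x$ it is $(q-2)^2/4$. That vanishes at $q=2$, while the coefficient $n-q$ it must beat does not (for $n\ge3$).

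This is exactly why the paper, in Lemmas~\ref{lem-hardy-neg} and~\ref{lem-slice}, passes to coordinate lines $x=te_i+x'$ with the nonsingular offset weight $(t^2+|x'|^2)^{\alpha/2}$. It then uses $u_i(0,x')=0$ on the whole hyperplane $\{x_i=0\}$. That holds for unconditional $u$ but fails for merely even $u$, which is why the paper's method stops at unconditional bodies.

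\textbf{Route (b).} This is a pointer rather than an argument. Concavity of the density in Borell's sense is useless here: for $q<n$, $|x|^{q-n}$ is only $-\frac1{n-q}$-concave, and $-\frac1{n-q}<-\frac1n$. So symmetry must enter in an essential way. You would have to state precisely which theorem of \cite{CER23} covers this non-log-concave, singular weight and verify its hypotheses. The Kolesnikov--Livshyts ``dimensional improvement'' is not the missing piece. Log-concavity of $\mu_q$ on origin-symmetric bodies already upgrades to \eqref{SZ-BM} by $q$-homogeneity, via the normalization at the end of Section~\ref{sec-unconditional-proof}.

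\textbf{Step 2 target.} This is misstated. Take $L_\mu u=C$, $u_\nu=\varphi$, $m(s)=\mu_q(K_s)$, and $W=(n-q)\log|x|$. The Reilly formula gives $m'(0)^2/m(0)\ge I(u)+m''(0)$, where $I(u)=\int_K\bigl(\|D^2u\|_{\HS}^2+D^2W(\nabla u,\nabla u)\bigr)\dd\mu_q$. So the curvature term enters with a plus sign, as your own ``negative term'' $-2(n-q)|x|^{-4}\langle x,\nabla u\rangle^2$ presupposes.
\begin{itemize}
\item Local $1/q$-concavity then needs $I(u)\ge\frac1qC^2\mu_q(K)$.
\item The dilation direction $u=\frac{C}{2q}|x|^2$ has $I(u)=\frac1qC^2\mu_q(K)$ exactly, so your constant $\frac{q-1}{q}$ is false for $q>2$.
\item In fact $I(u)\ge0$ suffices, as in Proposition~\ref{prop-energy}. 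This removes the need to split off $\frac c2|x|^2$.
\end{itemize}
You also need to justify the Reilly formula for the singular weight: the inner boundary terms on $\partial B_\varepsilon$ and $D^2u\in L^2$ near $0$. This is done in Lemmas~\ref{lem-expansion} and~\ref{lem-inner-boundary}, and evenness suffices for it.

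\textbf{Step 4.} This is unsupported. It invokes the equality case of an inequality you have not proved, and it only concerns $C^2_+$ bodies. The approximation in Step~1 does not transfer equality to general convex bodies. For $0<q\le n$, the paper proves an equality characterization only for unconditional $C^2_+$ bodies. The remark after Theorem~\ref{thm-unconditional} presents the equality case of \eqref{SZ-BM} as Problem~9 of \cite{SZ25}.
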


Although the functional $\tV_q$ is well-defined for all real $q$, the proof in \cite{SZ25} uses essentially the condition $q\leq n$. This led Sadovsky and Zhang to ask the following question.

\begin{prob}[\cite{SZ25}, Problem 12]\label{problem-SZ}
Does the inequality \eqref{BM-dual} hold when $q>n$?
\end{prob}

Some partial negative results were obtained by Xu and Yang \cite{XY26}. They proved that \eqref{BM-dual} fails for some origin-symmetric convex bodies when $q>2n+1$, and they also studied the local behavior near the Euclidean ball.

The purpose of this paper is to investigate Problem \ref{problem-SZ}. Our first result shows that, without the origin-symmetry assumption, the range $0<q\leq n$ is sharp.

\begin{thm}\label{thm-nonsym-fail}
Let $q>n$. Then there exist smooth strictly convex bodies $K,L\subset \bR^n$ containing the origin in their interiors such that
\begin{align}
    \tV_q(K+L)^{\frac1q}<\tV_q(K)^{\frac1q}+\tV_q(L)^{\frac1q}.
\end{align}
\end{thm}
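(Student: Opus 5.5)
The plan is to exploit translations of the Euclidean ball. For $q>n$, the weight $|x|^{q-n}$ in \eqref{polar-dual-q} is strictly increasing in $|x|$. So among all translates of a ball of fixed radius, the one centered at the origin should minimize $\tV_q$. Minkowski addition of two opposite translates recenters the ball, and this will produce a sum that is "too small".

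Concretely, fix $v\in\bR^n$ with $0<|v|<1$, and set
\[
K=\bB^n_2+v,\qquad L=\bB^n_2-v .
\]
Both are smooth, strictly convex, and contain the origin in their interiors, since $|v|<1$. Their Minkowski sum is $K+L=2\bB^n_2$. By $q$-homogeneity,
\[
\tV_q(K+L)^{1/q}=2\,\tV_q(\bB^n_2)^{1/q}.
\]
By the reflection $x\mapsto -x$, which preserves $|x|$, we have $\tV_q(K)=\tV_q(L)$. Hence the desired strict inequality reduces to showing
\[
\tV_q(\bB^n_2)<\tV_q(\bB^n_2+v).
\]

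To prove this, write $w(x)=|x|^{q-n}$ and use \eqref{polar-dual-q}:
\[
\tV_q(\bB^n_2+v)-\tV_q(\bB^n_2)=\frac qn\Big(\int_{(\bB^n_2+v)\setminus\bB^n_2}w\,dx-\int_{\bB^n_2\setminus(\bB^n_2+v)}w\,dx\Big).
\]
The two difference sets have equal, positive Lebesgue measure, because translation preserves volume and $v\neq0$. On the first set $|x|>1$, so $w>1$. On the second set $|x|\le 1$, so $w\le1$. Since $q-n>0$, $w$ is strictly increasing in $|x|$, and the difference is therefore strictly positive. (Equivalently, this is the bathtub or rearrangement principle for the radially increasing weight $w$.) This gives $\tV_q(K+L)^{1/q}<\tV_q(K)^{1/q}+\tV_q(L)^{1/q}$, which proves Theorem~\ref{thm-nonsym-fail}.

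There is no real obstacle here; the only point needing care is strictness. It holds because the symmetric difference of the ball and its translate has positive measure, and $w$ is strictly greater than $1$ off the closed unit ball, using $q>n$. For a more robust or quantitative version, one could instead compute the second variation $\frac{d^2}{dt^2}\tV_q(\bB^n_2+tv)\big|_{t=0}=\frac qn\int_{\bB^n_2}\partial_v^2 w\,dx$. Averaging over directions $v$, this equals $\frac{q|v|^2}{n^2}\int_{\bB^n_2}\Delta w\,dx$, with $\Delta w=(q-n)(q-2)|x|^{q-n-2}>0$ when $q>n\ge2$. This shows strict convexity of $t\mapsto\tV_q(\bB^n_2+tv)^{1/q}$ at $t=0$, which is incompatible with the concavity along Minkowski combinations that \eqref{BM-dual} would force. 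The elementary comparison above already suffices, however.
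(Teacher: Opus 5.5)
Your proof is correct, and it uses the same pair of bodies as the paper, verified by a different argument. The paper perturbs the ball by the first spherical harmonic, $h_t=1+t\langle u,e_1\rangle$. This is exactly the support function of $\bB^n_2+te_1$, so the paper's $K_{\pm t}$ are your $\bB^n_2\pm v$, with $K_t+K_{-t}=2\bB^n_2$.

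The difference lies in how strictness is established. The paper applies the weighted second variation formula (Lemma~\ref{lem-variation}) at the ball and computes $H_W=q-1$. It then compares this with the first eigenvalue $n-1$ of $-\Delta_{\bS^{n-1}}$. This shows that $t\mapsto\mu_q(K_t)^{1/q}$ is strictly convex at $t=0$, so the inequality fails only for sufficiently small $t$.

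You instead prove directly, by a bathtub comparison for the radially increasing weight $|x|^{q-n}$, that $\tV_q(\bB^n_2)<\tV_q(\bB^n_2+v)$ for every $0<|v|<1$.

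\begin{itemize}
\item \emph{What your route buys:} it is elementary and global. It needs no variational formula and no smallness of the translation. It also makes the mechanism transparent: for $q>n$ the centered ball minimizes $\tV_q$ among its translates, while adding opposite translates re-centers.
\item \emph{What the paper's route buys:} its spectral formulation characterizes exactly when the ball is locally stable under all perturbations, namely $q\le n$. The same computation restricted to even perturbations yields the local threshold $q\le 2n+1$ in the subsequent remark, which frames the origin-symmetric problem.
\end{itemize}

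Your side computation of $\frac{d^2}{dt^2}\tV_q(\bB^n_2+tv)\big|_{t=0}$ is also right. It requires $q>2$ so that $D^2|x|^{q-n}$ is integrable at the origin, which holds since $q>n\ge2$. It reproduces the paper's value, $\frac{q(q-n)}{n^2}|\bS^{n-1}|$ for $|v|=1$. As you note, it is not needed.
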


In the origin-symmetric class, we obtain a stronger counterexample range than that in \cite{XY26}.

\begin{thm}\label{thm-sym-fail}
Let $q>n+2$. Then there exist smooth strictly convex origin-symmetric bodies $K,L\subset \bR^n$ such that
\begin{align}
    \tV_q(K+L)^{\frac1q}<\tV_q(K)^{\frac1q}+\tV_q(L)^{\frac1q}.
\end{align}
\end{thm}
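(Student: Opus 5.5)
The plan is to combine a second-variation computation with a dimension-reduction construction. Since $\tV_q$ is $q$-homogeneous, \eqref{BM-dual} for all pairs in a class closed under Minkowski combinations is equivalent to concavity of $t\mapsto \Phi(t)=\tV_q(K+tL)^{1/q}$ on $[0,\infty)$. So it suffices to produce smooth strictly convex origin-symmetric $K,L$ with $\Phi''(0)>0$. Writing $\Phi=e^{g}$ with $g=\frac1q\log \tV_q(K+tL)$, the condition becomes $g''(0)+g'(0)^2>0$, which is a statement about the first and second variations of $\tV_q$ along $h_K+th_L$. Strict convexity and smoothness can be arranged at the end by replacing $K,L$ with $K+\delta\bB^n_2$, $L+\delta\bB^n_2$; a strict inequality survives small perturbations because $\tV_q$ is continuous in the Hausdorff metric.

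\textbf{Dimension reduction.} I would take flattened product-type bodies $K=K'\times \varepsilon\bB^{n-2}_2$ and $L=L'\times \varepsilon\bB^{n-2}_2$, where $K',L'\subset\bR^2$ are origin-symmetric (rounded off later). Then $K+tL=(K'+tL')\times(1+t)\varepsilon\bB^{n-2}_2$. Using \eqref{polar-dual-q}, $\tV_q(K)=\frac qn\int_K|x|^{q-n}\dd x$, and writing $x=(y,z)$ with $|z|\le\varepsilon$, one gets
\[
\tV_q(K+tL)=c_n\,\big((1+t)\varepsilon\big)^{n-2}\,\big(W_{q'}(K'+tL')+o(1)\big),
\qquad q'=q-n+2,
\]
as $\varepsilon\to0$, uniformly in $t$ on compacts and together with two $t$-derivatives. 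Here $W_{q'}(K')=\int_{K'}|y|^{q-n}\dd y$ is, up to a constant, the planar $q'$-th dual quermassintegral. Hence
\[
g(t)=\tfrac1q\Big[(n-2)\log(1+t)+\log W_{q'}(K'+tL')\Big]+o(1).
\]
The key point is that the exponent $1/q$ is now mismatched with the homogeneity $q'$ of the planar functional. The hypothesis $q>n+2$ is exactly $q'>4$.

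\textbf{Planar second variation.} Take $K'=\bB^2_2$ and $h_{L'}=1+a\cos 2\theta$ with $a$ small (even, hence origin-symmetric). Use the planar expansion $\rho_{h}=h+\frac{h_\theta^2}{2h}+O(|h-1|^3)$ near the disk, which comes from the exact relation between the radial and support functions. This gives $\log W_{q'}$ to second order in $t$. In particular, the $\cos2\theta$ mode contributes $\frac{q'(q'-1)}{4}a^2$ from $\rho^{q'}$ and $q'a^2$ from the $h_\theta^2$-term. Combining with the $(n-2)\log(1+t)$ term and the radial part $1$ of $h_{L'}$, one computes $g''(0)+g'(0)^2$ as an explicit quadratic in $a$. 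I expect its $a^2$-coefficient to be positive precisely when $q'>4$, i.e.\ the threshold $q=n+2$ should appear here; this is consistent with the endpoint $q=n+2$ being true. If the pure mode $k=2$ gives exactly the threshold $4$ only in the limit, I would also check higher even modes $\cos 2k\theta$ and choose whichever makes the coefficient positive.

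\textbf{Main obstacle.} The main obstacle is this explicit second-variation bookkeeping. One must correctly account for the cross term between the isotropic part of $L'$ (which scales everything, including the thickness factor) and the oscillating part. One must also verify the sign exactly in the range $q'>4$ rather than in a smaller range. A secondary technical point is justifying that the $o(1)$ error in the dimension reduction is $C^2$ in $t$, so that the strict sign passes to small $\varepsilon$. I would handle this by writing $\tV_q$ via \eqref{polar-dual-q}, integrating out $z$ explicitly, and using dominated convergence for the $t$-derivatives, since the integrand $|x|^{q-n}$ is smooth enough when $q>n$.
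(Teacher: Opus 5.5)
There is a genuine gap. With the disk as the planar base body, your construction cannot reach $n+2<q\le n+3$.

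First, the expansion you plan to use has the wrong sign. Since $\rho_K\le h_K$, one has $\rho_h=h-\frac{h_\theta^2}{2h}+O(|h-1|^3)$. For $q'=2$, your version would give area $\frac12\int(h^2+h_\theta^2)\,d\theta$ instead of the correct $\frac12\int(h^2-h_\theta^2)\,d\theta$.

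Second, the isotropic part of $h_{L'}$ and the co-scaled thin factor drop out by homogeneity. Write $K+tL=(1+t)\bigl(M_s\times\varepsilon\bB^{n-2}_2\bigr)$ with $h_{M_s}=1+s\cos2\theta$ and $s=ta/(1+t)$. Then $g'(0)=1$ and $\Phi''(0)=a^2\Psi''(0)$, where $\Psi(s)=\tV_q(M_s\times\varepsilon\bB^{n-2}_2)^{1/q}$ is even in $s$.

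With the correct sign, the average of $\rho^{q'}$ over the circle is $1+\frac{q'(q'-5)}{4}s^2+O(s^3)$. Hence
\[
\lim_{\varepsilon\to0}\bigl(g''(0)+g'(0)^2\bigr)=\frac{q'(q'-5)}{2q}\,a^2 .
\]
For the mode $\cos2k\theta$, the factor $q'-5$ becomes $q'-1-4k^2$, so higher modes only make things worse. This is just the planar case of the local stability of the ball under even perturbations (the first nonconstant even eigenvalue on $\bS^1$ is $4$). So your family gives counterexamples only for $q'>5$, i.e.\ $q>n+3$.

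Nor does the ``mismatch'' between $1/q$ and $q'$ help. For co-scaled products, with $w=\log W_{q'}(K'+tL')$, one finds in the limit $\varepsilon\to0$
\[
q\bigl(g''+g'^2\bigr)=\Bigl(w''+\frac{(w')^2}{q'}\Bigr)-\frac{(n-2)(w'-q')^2}{q'q}.
\]
So the $n$-dimensional convexity defect never exceeds the planar one. You need an honest failure of $1/q'$-concavity of the planar functional, which no even perturbation of the disk provides when $q'\le5$.

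The missing idea is to test at a non-round planar body. The paper uses the square with the rectangle deformation $R_t=[-1-t,1+t]\times[-1+t,1-t]$, keeping the thin factor $[-\varepsilon,\varepsilon]^{n-2}$ fixed. It compares $R_t\times[-\varepsilon,\varepsilon]^{n-2}$ with $R_{-t}\times[-\varepsilon,\varepsilon]^{n-2}$, whose Minkowski midpoint is $R_0\times[-\varepsilon,\varepsilon]^{n-2}$.

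By symmetry $F_Q'(0)=0$. Differentiating in the side lengths gives $F_Q''(0)=8\bigl(\alpha\int_{-1}^1(1+y^2)^{\alpha-1}\,dy-2^\alpha\bigr)$ with $\alpha=\frac{Q-2}{2}$. The bound $1+y^2\ge 2|y|$ shows this is positive exactly when $\alpha>1$, i.e.\ $Q>4$.

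Because the first variation vanishes, the exponent $1/q$ plays no role. Your dimension-reduction step (uniform in $t$ with two derivatives) and your final smoothing then give counterexamples for all $q>n+2$.
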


The proof is based on a dimension reduction argument. Degenerating an $n$-dimensional body to a product of a planar rectangle and a thin cube reduces the $n$-dimensional parameter $q$ to a planar parameter $Q=q-n+2$. The planar rectangle calculation gives a strict failure for $Q>4$, which is precisely $q>n+2$.

At the endpoint $q=n+2$, we prove a positive result for origin-symmetric bodies. This follows from a classical inequality of Hadwiger \cite{Had56} for the polar moment of inertia.

\begin{thm}\label{thm-endpoint}
Let $K,L\subset \bR^n$ be origin-symmetric convex bodies. Then
\begin{align}\label{endpoint-BM}
    \tV_{n+2}(K+L)^{\frac1{n+2}}\geq \tV_{n+2}(K)^{\frac1{n+2}}+\tV_{n+2}(L)^{\frac1{n+2}}.
\end{align}
Equality holds if and only if $K$ and $L$ are dilates of each other.
\end{thm}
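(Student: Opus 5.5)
The plan is to reduce the statement to Hadwiger's inequality for the polar moment of inertia \cite{Had56}. For a convex body $K\subset\bR^n$, let
\[
I(K)=\int_K |x-c_K|^2\dd x
\]
denote its polar moment of inertia about its centroid $c_K$. Hadwiger's theorem states that
\[
I(K+L)^{\frac1{n+2}}\ge I(K)^{\frac1{n+2}}+I(L)^{\frac1{n+2}},
\]
with equality if and only if $K$ and $L$ are homothetic. I will take this classical result as given and only check carefully that its hypotheses and normalization match the present setting.

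\textbf{Step 1: identify $\tV_{n+2}$ with the moment of inertia.} Using the polar-coordinate formula \eqref{polar-dual-q} with $q=n+2$,
\[
\tV_{n+2}(K)=\frac{n+2}{n}\int_K |x|^2\dd x .
\]
If $K$ is origin-symmetric, then $c_K=0$, so $\tV_{n+2}(K)=\frac{n+2}{n}I(K)$. The bodies $L$ and $K+L$ are also origin-symmetric, since $-(K+L)=(-K)+(-L)=K+L$. Hence the same identity holds for all three bodies $K$, $L$ and $K+L$.

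\textbf{Step 2: transfer the inequality.} The constant $\bigl(\frac{n+2}{n}\bigr)^{1/(n+2)}$ is the same for all three terms. Multiplying Hadwiger's inequality by it gives \eqref{endpoint-BM} directly.

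\textbf{Step 3: equality.} If $K$ and $L$ are dilates, then $K+L=(1+t)K$, and equality follows from the $(n+2)$-homogeneity of $\tV_{n+2}$. Conversely, suppose equality holds. Then equality holds in Hadwiger's inequality, so $L=tK+v$ for some $t>0$ and $v\in\bR^n$. Comparing centroids, both $K$ and $L$ are origin-symmetric, so $c_L=0=t\,c_K+v$, which forces $v=0$. Thus $L=tK$.

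\textbf{Main obstacle.} The only delicate point is ensuring that Hadwiger's result is available in exactly this form. It must be stated about the centroid, rather than about a fixed point, and it must include the equality characterization. This is why the symmetry assumption is essential here: for non-symmetric bodies, $\int_K|x|^2\dd x$ differs from $I(K)$ by the term $|K|\,|c_K|^2$, and Step 1 breaks down.

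If a self-contained argument were preferred over citing \cite{Had56}, I would proceed as follows. First write $\int_K |x|^2\dd x=\sum_i\int_K x_i^2\dd x$. Next realize each $\int_K x_i^2\dd x$, up to a constant, as the volume of an $(n+2)$-dimensional convex body: rotate $K$ in two extra dimensions around the hyperplane $x_i=0$. Then apply the classical Brunn-Minkowski inequality in $\bR^{n+2}$ to each such volume. Finally, combine the $n$ resulting inequalities using Minkowski's inequality: a sum of functionals whose $\frac1{n+2}$-th powers are superadditive again has a superadditive $\frac1{n+2}$-th power. In that route, the step I expect to be hardest is checking that the $(n+2)$-dimensional body built from $K+L$ contains the Minkowski sum of the bodies built from $K$ and $L$.
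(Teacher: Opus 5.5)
Your main argument (Steps 1--3) is correct and is essentially the paper's proof. For origin-symmetric bodies the centroid is $0$, so $\tV_{n+2}=\frac{n+2}{n}I$ on $K$, $L$ and $K+L$, and Hadwiger's inequality transfers directly. In the equality case, symmetry removes the translation part of the homothety; your centroid comparison is a clean way to make the paper's one-line remark precise.

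The optional self-contained route, however, has a genuine error in its last step. Superadditivity of each $F_i^{1/(n+2)}$ does not imply superadditivity of $(\sum_i F_i)^{1/(n+2)}$. Put $a_i=F_i(K)^{1/p}$, $b_i=F_i(L)^{1/p}$ and $p=n+2>1$. You only get $\sum_i F_i(K+L)\ge\|a+b\|_p^p$, and Minkowski's inequality gives $\|a+b\|_p\le\|a\|_p+\|b\|_p$, which is the wrong direction (take $a=(1,0)$, $b=(0,1)$).

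The paper's proof of Hadwiger's theorem avoids this. It normalizes $I(K)=I(L)=1$ and chooses an orthonormal basis $u_1,\dots,u_n$ in which the traceless matrix $\int_K xx^{T}dx-\int_L xx^{T}dx$ has zero diagonal. Then $a=b$, and nothing is lost in summing. This requires the one-directional inequality for an arbitrary direction $u$, not only for the coordinate axes.

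The lift also needs care. It should be taken over the half-body $M=K\cap\{\langle x,u\rangle\ge 0\}$, for example $M^\sharp=\{(x,y)\in\bR^n\times\bR^2: x\in M,\ |y|\le\langle x,u\rangle\}$, whose volume is $\pi\int_M\langle x,u\rangle^2dx$. If "rotate $K$ around the hyperplane" is meant literally, the resulting body of revolution is not convex whenever some chord of $K$ parallel to $u$ misses $u^\perp$. By origin symmetry the two halves of $K$ carry equal moments, so Hadwiger's translation step is unnecessary here. Finally, the containment you expected to be hardest, $M^\sharp+N^\sharp\subseteq(M+N)^\sharp$, is immediate from $|y_1+y_2|\le|y_1|+|y_2|$.
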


Motivated by this, we propose the following conjecture.

\begin{conj}\label{conj-sym-range}
Let $K,L\subset \bR^n$ be origin-symmetric convex bodies. Then for $n<q<n+2$,
\begin{align}
    \tV_q(K+L)^{\frac1q}\geq \tV_q(K)^{\frac1q}+\tV_q(L)^{\frac1q}.
\end{align}
Equality holds if and only if $K$ and $L$ are dilates of each other.
\end{conj}

Finally, for unconditional convex bodies, Hu-Ivaki \cite{HI26} proved the endpoint case \(q=n+2\), as well as a positive range \(q\in (n,n+3-2\sqrt{2}]\). In this paper, we verify the above conjecture in the unconditional case. In fact, we obtain the full range $0<q\le n+1$ for unconditional convex bodies.

\begin{thm}\label{thm-unconditional}
Let $K,L\subset \bR^n$ be unconditional convex bodies. Then for $0<q\leq n+1$,
\begin{align}\label{unc-BM}
    \tV_q(K+L)^{\frac1q}\geq \tV_q(K)^{\frac1q}+\tV_q(L)^{\frac1q}.
\end{align}
Moreover, if in addition $K$ and $L$ are of class $C^2_+$, then equality holds if and only if $K$ and $L$ are dilates of each other.
\end{thm}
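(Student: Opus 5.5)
We follow the local-to-global strategy of Kolesnikov--Milman and Sadovsky--Zhang. Since the class of unconditional bodies is closed under Minkowski addition and approximation, it suffices to treat $K,L$ of class $C^2_+$. By the standard reduction, the inequality \eqref{unc-BM} for all such pairs is equivalent to concavity of $t\mapsto \tV_q(K+tL)^{1/q}$. It is enough to check this at $t=0$ for every unconditional $C^2_+$ body $K$ and every unconditional perturbation $h_L$. Writing $h_t=h_K+t\varphi$ with $\varphi$ even in each coordinate, the condition is the second variation inequality
\[
\tV_q\,\frac{d^2}{dt^2}\tV_q(K_t)\Big|_{t=0}\le \frac{q-1}{q}\Big(\frac{d}{dt}\tV_q(K_t)\Big|_{t=0}\Big)^2 .
\]

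By \eqref{polar-dual-q}, $\tV_q(K)=\frac qn\int_K|x|^{q-n}\,dx$ is a weighted volume with density $w=|x|^{q-n}$. For $q>n$ this weight is not log-concave; it is singular (degenerate) at the origin, which is where the difficulty lies. We compute the two derivatives as boundary integrals against $w$. The first is $\frac qn\int_{\partial K}w\varphi\,d\mu$ in terms of support-function data. The second involves $\int_{\partial K}w(\II^{-1}\nabla_\partial\varphi\cdot\nabla_\partial\varphi-H\varphi^2-\partial_\nu w\,\varphi^2/w)$.

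Following Kolesnikov--Milman, we solve the weighted Neumann problem $L_wu=\Delta u+\nabla\log w\cdot\nabla u=c$ in $K$, with $\partial_\nu u=\varphi$ on $\partial K$ and $c$ chosen from the compatibility condition. Because $\varphi$ and $K$ are unconditional, $u$ is even in each coordinate. We then apply a weighted Reilly formula for $w$. This requires care at the origin: we excise a small ball, check that the boundary terms vanish (this is where $q-n>-n$ and evenness of $u$ are used), and so obtain a singular weighted Reilly formula. Combined with the second fundamental form term, the second variation inequality reduces to
\[
\int_K w\Big(\|\nabla^2u\|_{\HS}^2-\nabla^2 \log w(\nabla u,\nabla u)\Big)\ge \frac{1}{q}\Big(\int_K w\,L_wu\Big)^2\Big/\int_K w .
\]
Here $-\nabla^2\log w=(q-n)\big(-\Id/|x|^2+2xx^T/|x|^4\big)$. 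Its negative part is $-(q-n)|\nabla u|^2/|x|^2$ in the directions tangential to spheres. This negative term must be absorbed by $\|\nabla^2u\|^2$, using the splitting $\|\nabla^2u\|^2\ge \sum_i(\partial_{ii}u)^2+\sum_{i\ne j}(\partial_{ij}u)^2$ together with a Cauchy--Schwarz step producing $(L_wu)^2/q$ for the trace part.

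\textbf{Main obstacle.} The main step is a Hardy-type inequality exploiting that $\partial_iu$ vanishes on the coordinate hyperplane $\{x_i=0\}$ (by unconditionality). Roughly, we need
\[
\int_K w\,\frac{(\partial_iu)^2}{x_i^2}\lesssim \int_K w\,|\nabla\partial_iu|^2 .
\]
The coordinate-slice Hardy inequality comes from integrating along lines parallel to $e_i$ within the convex body; the one-dimensional Hardy constant is $4$ for functions vanishing at $0$. We would use $|x|^2\ge x_i^2$ and distribute the tangential negative term over coordinates. The bookkeeping of constants should give exactly $q-n\le 1$, i.e.\ $q\le n+1$; I expect this constant tracking, and controlling the weight $|x|^{q-n}$ along slices (monotone in $|x_i|$, which helps), to be the hardest part.

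\textbf{Equality.} In the $C^2_+$ case, equality forces equality in the Cauchy--Schwarz and Hardy steps. This gives $\nabla^2u$ proportional to $\Id$ and $\nabla u$ radial, so $\varphi=c\,h_K$ up to the trivial direction. Hence $L$ is a dilate of $K$.
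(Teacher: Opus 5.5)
Your architecture matches the paper's: a weighted Neumann problem, a Reilly formula on $K\setminus\overline{B_\varepsilon}$ whose inner boundary terms vanish by unconditional evenness, and a coordinate-slice Hardy inequality using $u_i=0$ on $\{x_i=0\}$. The problem is that the step you call bookkeeping is the heart of the proof, and the inequality you propose cannot reach $q=n+1$.

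Write $\alpha=q-n$. Bounding $|\nabla_Tu|^2\le|\nabla u|^2$ and using $|x|\ge|x_i|$ reduces you to a slice inequality $\int_K u_i^2x_i^{-2}|x|^\alpha\le C\int_K u_{ii}^2|x|^\alpha$. On slices near the $x_i$-axis this is the classical $t^\alpha$-weighted Hardy inequality. Its sharp constant $4/(1-\alpha)^2$ blows up at $\alpha=1$. Absorbing $\alpha\int_K|\nabla_Tu|^2|x|^{\alpha-2}$ then needs $4\alpha\le(1-\alpha)^2$, i.e. $\alpha\le 3-2\sqrt2$, which is essentially the Hu--Ivaki range, not $\alpha\le1$.

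The missing idea has two parts. First, keep the exact pointwise bound
\[
|x|^2|\nabla_Tu|^2=\sum_{i<j}(x_iu_j-x_ju_i)^2\le 2\sum_{i}\bigl(|x|^2-x_i^2\bigr)u_i^2 .
\]
Then on the slice $x=te_i+x'$ the quantity to control is $|x'|^2u_i^2(t^2+|x'|^2)^{(\alpha-4)/2}$, which decays like $t^{\alpha-4}$ rather than $t^{\alpha-2}$. Second, pair this with the sharp offset Hardy inequality
\[
\int_0^R f'(t)^2(t^2+c^2)^{\alpha/2}\,dt\ \ge\ (3-\alpha)\,c^2\int_0^R f(t)^2(t^2+c^2)^{(\alpha-4)/2}\,dt,\qquad f(0)=0,
\]
proved via $t=c\sinh s$ with ground state $\tanh s$. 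Together these give $\alpha\int_K|\nabla_Tu|^2|x|^{\alpha-2}\le\frac{2\alpha}{3-\alpha}\sum_i\int_K u_{ii}^2|x|^\alpha$, and $\frac{2\alpha}{3-\alpha}\le1$ is exactly $\alpha\le1$.

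Your sketch also does not cover $0<q<n$. There the negative part of $D^2W_\alpha$ is radial, not tangential. The paper handles it with the separate slice estimate $\int_K u_{ii}^2|x|^\alpha\ge-\alpha\int_K u_i^2|x|^{\alpha-2}$; alternatively, invoke Theorem~\ref{thm-SZ} for the inequality.

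There is a second, related problem. You aim for $I_\alpha(u)\ge\frac1q\bigl(\int_K wL_wu\bigr)^2/\int_K w$, extracting $(L_wu)^2/q$ by Cauchy--Schwarz on the diagonal of $D^2u$. But you also spend that same diagonal on the Hardy absorption. At $\alpha=1$ the absorption above uses up all of $\sum_iu_{ii}^2$: the leftover coefficient $\frac{3(1-\alpha)}{3-\alpha}$ vanishes, so the plan double-counts.

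The paper avoids this by proving only $I_\alpha(u)\ge0$, i.e. log-concavity $\tilde\mu\,\tilde\mu''\le(\tilde\mu')^2$. It then upgrades to the $1/q$ inequality by $q$-homogeneity: apply log-concavity to $K/a$ and $L/b$ with $a=\tilde\mu_\alpha(K)^{1/q}$, $b=\tilde\mu_\alpha(L)^{1/q}$, at $t_0=b/(a+b)$.

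Alternatively, you can rescue your stronger target directly. Write $u=\frac{C_\varphi}{2q}|x|^2+v$ with $L_\alpha v=0$. The cross term in $I_\alpha$ is $\frac{2C_\varphi}{q}\int_K L_\alpha v\,d\tilde\mu_\alpha=0$, so $I_\alpha(u)=\frac{C_\varphi^2}{q}\tilde\mu_\alpha(K)+I_\alpha(v)$. Your target then reduces to $I_\alpha(v)\ge0$, which the Hardy estimates above give.

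Your equality argument needs redoing accordingly. At $\alpha=1$ nothing forces $D^2u\propto\Id$; rigidity comes from $u_{ij}=0$ for $i\neq j$ together with $\partial_ru=0$. The global equality statement then follows from strict local log-concavity along the whole segment $(1-t)K+tL$.
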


\begin{rmk}
In the unconditional $C^2_+$ setting, the equality characterization in
Theorem~\ref{thm-unconditional} answers Problem 9 of Sadovsky-Zhang \cite{SZ25}, which asks for the equality case in \eqref{SZ-BM}. 
\end{rmk}

The proof uses a singular weighted Reilly formula for the density $|x|^\alpha$, where $\alpha=q-n\in(-n,1]$, together with a Hardy-type inequality on coordinate slices adapted to the sign of $\alpha$.
The argument is inspired by Hu-Ivaki \cite{HI26}, but is adapted here to the full range $0<q\le n+1$.
As an application, we derive a Minkowski inequality and consequent uniqueness for the $(1,q)$-th dual curvature measure in the unconditional $C^2_+$ class.

The paper is organized as follows. In Section~\ref{sec-2} we recall the basic definitions and variational formulas. Sections~\ref{sec-3} and~\ref{sec-4} contain counterexamples: the inequality fails for arbitrary convex bodies when $q>n$, and fails for origin‑symmetric bodies when $q>n+2$ by reducing to a planar calculation. Section~\ref{sec-5} treats the endpoint $q=n+2$, where Hadwiger's inequality for the polar moment of inertia gives the Brunn–Minkowski inequality for all symmetric bodies. The core of the paper lies in Sections~\ref{sec-unconditional-prep}-\ref{sec-unconditional-proof}, where we develop a singular weighted Reilly formula and a coordinate‑slice Hardy inequality, valid for both positive and negative indices, and prove the full range $0<q\le n+1$. Finally, Section~\ref{sec-8} derives uniqueness results for $(1,q)$-th dual curvature measures as consequences of our Brunn–Minkowski inequalities.

\section{Preliminaries}\label{sec-2}

\subsection{Convex bodies and dual quermassintegrals}\

Let $\cK_o^n$ denote the class of convex bodies in $\bR^n$ containing the origin in their interiors. We write $\cK_{+}^n$ for the subclass of convex bodies with $C^2$ boundary and positive Gauss curvature, and $\cK_{+,e}^n$ for the class of origin-symmetric convex bodies in $\cK_{+}^n$.

For $K\in \cK_o^n$, the support function of $K$ is defined by
\[
    h_K(u)=\max_{x\in K}\langle x,u\rangle,\qquad u\in \bS^{n-1}.
\]
If $K,L\in \cK_o^n$ and $a,b\geq 0$, then
\[
    h_{aK+bL}=ah_K+bh_L.
\]
We also recall that the polar body of $K\in\cK_o^n$ is
\[
K^\ast=\{x\in \bR^n:\langle x,y\rangle\le 1\ \text{for all }y\in K\},
\]
and that $K^\ast\in \cK_o^n$ whenever $K\in \cK_o^n$.

For $q>0$, define the weighted measure
\begin{align}\label{mu-q}
    d\mu_q(x)=|x|^{q-n}\,dx,\qquad 
    \mu_q(K)=\int_K |x|^{q-n}\,dx.
\end{align}
By the polar-coordinate formula,
\[
    \tV_q(K)=\frac qn \mu_q(K).
\]
In particular, since $\mu_q$ is homogeneous of degree $q$, the Brunn-Minkowski inequality \eqref{BM-dual} for $\tV_q$ is equivalent to the corresponding inequality for $\mu_q$.

It will be convenient to use the standard notation
\[
W(x)=(n-q)\log |x|,\qquad d\mu_q=e^{-W}\,dx.
\]
Although $W$ is singular at the origin, this does not affect the variational arguments below, since all convex bodies under consideration contain a fixed neighborhood of the origin.

\subsection{Weighted second variational formula}\

Let $K\in \cK_+^n$ and consider a perturbation of the support function
\[
    h_t=h_K+t f,
\]
where $f\in C^2(\bS^{n-1})$ and $|t|$ is sufficiently small. Let $K_t$ be the corresponding convex body. Writing $\nu$ for the Gauss map of $\partial K$, the normal velocity of $\partial K_t$ at $t=0$ is $ \phi=f\circ\nu$.

Let $\II$ be the second fundamental form of $\partial K$, and define the weighted mean curvature
\[
    H_W=H-\langle \nabla W,\nu\rangle,
\]
where $H=\tr \II$. The following variational formula is due to Kolesnikov-Milman \cite{KM18}; see also \cite[Eq.~(10)]{SZ25}.

\begin{lem}[Weighted first and second variation]\label{lem-variation}
For the above perturbation,
\begin{align}
    \left.\frac{d}{dt}\right|_{t=0}\mu_q(K_t)
    &=\int_{\partial K}\phi\,d\mu_{\partial K,q},\\
    \left.\frac{d^2}{dt^2}\right|_{t=0}\mu_q(K_t)
    &=\int_{\partial K}\left(H_W\phi^2-\left\langle \II^{-1}\nabla_{\partial K}\phi,\nabla_{\partial K}\phi\right\rangle\right)d\mu_{\partial K,q},
\end{align}
where
\[
    d\mu_{\partial K,q}=|x|^{q-n}d\cH^{n-1}_{\partial K}.
\]
\end{lem}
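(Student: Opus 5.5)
The plan is to carry out both variations on the sphere, using the support function as the parametrization. For $h=h_t$ of class $C^2$ with positive definite $A[h]=\nabla^2_{\bS^{n-1}}h+h\,\Id$, the inverse Gauss map is $x_t(u)=\nabla h_t(u)=h_t(u)u+\nabla_{\bS^{n-1}}h_t(u)$, where $\nabla h_t$ is the Euclidean gradient of the $1$-homogeneous extension. The area element pulls back as $d\cH^{n-1}_{\partial K_t}=\det A[h_t]\,du$. The eigenvalues of $A[h_t]$ are the principal radii, so $A^{-1}$ represents $\II$ in an orthonormal frame transported by $\nu$. Since $\partial K_t$ stays in a fixed annulus around the origin for small $|t|$, the singular weight $e^{-W}=|x|^{q-n}$ is smooth along the whole deformation and plays no special role.

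For the first variation, apply the transport formula for a moving domain:
\[
\frac{d}{dt}\int_{K_t}e^{-W}dx=\int_{\partial K_t}e^{-W}\langle\partial_t x_t,\nu_t\rangle\,d\cH^{n-1}.
\]
Because $\partial_t x_t=\nabla f$, we have $\langle\nabla f,\nu\rangle=f$ at $u=\nu$, so the normal velocity is $f\circ\nu_t$. At $t=0$ this gives $\int_{\partial K}\phi\,d\mu_{\partial K,q}$. For general $t$ the same identity reads
\[
\frac{d}{dt}\mu_q(K_t)=\int_{\bS^{n-1}}f\,e^{-W(\nabla h_t)}\det A[h_t]\,du .
\]

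For the second variation, differentiate this last expression at $t=0$. Two terms appear.
\begin{itemize}
\item The weight term is $-\int f\,e^{-W}\det A\,\langle\nabla W,\nabla f\rangle\,du$. Split it using $\nabla f=f u+\nabla_{\bS}f$. The normal part gives $-\int f^2\langle\nabla W,\nu\rangle\,d\mu_{\partial K,q}$, and the tangential part $-\int f e^{-W}\det A\,\langle\nabla W,\nabla_{\bS}f\rangle\,du$ remains.
\item The determinant term is $\int f e^{-W}\,c^{ij}\bigl(f_{ij}+f\delta_{ij}\bigr)\,du$, where $c^{ij}$ is the cofactor matrix of $A$. Integrate by parts using the divergence-free property $\nabla_i c^{ij}=0$ on the sphere. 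This produces $\int e^{-W}\bigl(f^2\tr c-c^{ij}f_if_j\bigr)du$ together with $-\int f\,c^{ij}f_j\,\nabla_i\bigl(e^{-W(\nabla h)}\bigr)du$.
\end{itemize}
Since $\nabla_i(\nabla h)=A_{ik}e_k$, the chain rule gives $\nabla_i e^{-W(\nabla h)}=-e^{-W}W_kA_{ki}$. Combined with $c^{ij}A_{ki}=\det A\,\delta_{jk}$, the last integral becomes $+\int f e^{-W}\det A\,\langle\nabla W,\nabla_{\bS}f\rangle\,du$. This exactly cancels the tangential leftover from the weight term.

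It remains to translate back to $\partial K$. We have $\tr c/\det A=\tr A^{-1}=H$ and $c/\det A=A^{-1}$. The chain rule gives $\nabla_{\partial K}\phi=A^{-1}\nabla_{\bS}f$, hence
\[
\langle\II^{-1}\nabla_{\partial K}\phi,\nabla_{\partial K}\phi\rangle=\langle A\,A^{-1}\nabla f,A^{-1}\nabla f\rangle=\langle A^{-1}\nabla_{\bS}f,\nabla_{\bS}f\rangle .
\]
Collecting the terms gives $\int_{\partial K}(H-\langle\nabla W,\nu\rangle)\phi^2-\langle\II^{-1}\nabla\phi,\nabla\phi\rangle$ against $d\mu_{\partial K,q}$, as claimed.

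The main obstacle is this bookkeeping: performing the cofactor integration by parts correctly and noticing that the weight gradient's tangential contribution cancels, so that only $\langle\nabla W,\nu\rangle$ survives in $H_W$. Alternatively, this identity can be quoted from Kolesnikov--Milman \cite{KM18}.
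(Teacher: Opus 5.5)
Your derivation is correct, but it follows a different route from the paper. The paper gives no proof of Lemma~\ref{lem-variation}; it simply quotes the formula from Kolesnikov--Milman \cite{KM18} (see also \cite[Eq.~(10)]{SZ25}).

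You instead prove it directly in support-function coordinates on $\bS^{n-1}$. Your steps are:
\begin{enumerate}[(1)]
\item Reynolds transport gives $\frac{d}{dt}\mu_q(K_t)=\int f\,e^{-W(\nabla h_t)}\det A[h_t]\,du$.
\item You differentiate this once more in $t$.
\item You integrate by parts using the divergence-free cofactor matrix $c^{ij}$ of $A=\nabla^2_{\bS^{n-1}}h+h\,\Id$.
\item The identity $c^{ij}A_{ki}=\det A\,\delta_{jk}$ makes the tangential part of $\nabla W$ cancel exactly, so only $\langle\nabla W,\nu\rangle$ survives in $H_W$.
\end{enumerate}
I checked the signs in this cancellation and in the translation back to $\partial K$ (via $\tr c=\det A\,H$ and $\nabla_{\partial K}\phi=A^{-1}\nabla_{\bS^{n-1}}f$); they are right.

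Your route gives a self-contained, elementary proof. It makes visible why the singular weight is harmless (the boundaries stay in a fixed annulus) and why only the normal component of $\nabla W$ enters. The citation buys brevity and generality: the Kolesnikov--Milman formula is stated intrinsically on $\partial K$ for general smooth densities, independent of any Gauss-map parametrization.

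One point you should add concerns regularity. For $K\in\cK_+^n$ the support function is only $C^2$, so $c^{ij}$ is merely continuous, and $\nabla_i c^{ij}=0$ (hence your integration by parts) is not available pointwise. Either read it distributionally, or run your computation for smooth approximants of $h$ with $A[h]$ uniformly positive definite. Then pass to the limit: both the pre- and post-integration-by-parts expressions are continuous in $h$ in the $C^2$ topology.
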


Consequently, the local $1/q$-concavity of $\mu_q$ at $K$ is equivalent to
\begin{align}\label{local-concavity}
    \mu_q(K)\left.\frac{d^2}{dt^2}\right|_{t=0}\mu_q(K_t)
    \leq \frac{q-1}{q}
    \left(\left.\frac{d}{dt}\right|_{t=0}\mu_q(K_t)\right)^2.
\end{align}

The next lemma summarizes the equivalences that will be used repeatedly below.

\begin{lem}\label{lem-equivalent}
Let $q>0$. For origin-symmetric convex bodies, the following are equivalent:
\begin{enumerate}[(i)]
    \item The Brunn-Minkowski inequality for the dual quermassintegral $\tV_q$ holds:
    \[
        \tV_q(K+L)^{1/q}\ge \tV_q(K)^{1/q}+\tV_q(L)^{1/q}
    \]
    for all origin-symmetric convex bodies $K,L\subset\bR^n$.

    \item The measure $\mu_q$ is $1/q$-concave on origin-symmetric convex bodies, i.e.,
    \[
        \mu_q((1-\lambda)K+\lambda L)^{1/q}
        \ge (1-\lambda)\mu_q(K)^{1/q}+\lambda\mu_q(L)^{1/q}
    \]
    for all $\lambda\in[0,1]$ and all origin-symmetric convex bodies $K,L\subset\bR^n$.

    \item For every $K\in\cK_{+,e}^n$ and every $f\in C_e^2(\bS^{n-1})$ such that
    $h_t=h_K+t f$ is a smooth support-function perturbation, the second variation inequality holds:
    \begin{align}\label{eq-secondvar-equiv}
        \mu_q(K)\left.\frac{d^2}{dt^2}\right|_{t=0}\mu_q(K_t)
        \le \frac{q-1}{q}
        \left(\left.\frac{d}{dt}\right|_{t=0}\mu_q(K_t)\right)^2.
    \end{align}

    \item Equivalently, writing $\phi=f\circ\nu$, one has
    \begin{align}\label{eq-expanded-equiv}
        \mu_q(K)\int_{\partial K}\left(
        H_W\phi^2-\left\langle \II^{-1}\nabla_{\partial K}\phi,\nabla_{\partial K}\phi\right\rangle
        \right)d\mu_{\partial K,q}
        \le \frac{q-1}{q}
        \left(\int_{\partial K}\phi\,d\mu_{\partial K,q}\right)^2.
    \end{align}
\end{enumerate}
Moreover, the above equivalences also hold for general convex bodies in $\mathcal{K}_o^n$, where the restriction to even functions is replaced by general functions in $C^2(\bS^{n-1})$.
\end{lem}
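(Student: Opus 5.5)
The equivalence (i)$\Leftrightarrow$(ii) uses only the $q$-homogeneity of $\mu_q$ and the relation $\tV_q=\frac qn\mu_q$. Given (i) and $\lambda\in[0,1]$, apply it to the bodies $(1-\lambda)K$ and $\lambda L$; these are again origin-symmetric convex bodies, degenerate only when $\lambda\in\{0,1\}$, where (ii) is trivial. Homogeneity gives
\[
\mu_q((1-\lambda)K)^{1/q}=(1-\lambda)\mu_q(K)^{1/q},
\]
and similarly for $\lambda L$, which yields (ii). Conversely, given (ii) with $\mu_q(K),\mu_q(L)>0$, set $\lambda=\mu_q(L)^{1/q}/(\mu_q(K)^{1/q}+\mu_q(L)^{1/q})$. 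Apply (ii) to $K'=K/(1-\lambda)$ and $L'=L/\lambda$, which satisfy $\mu_q(K')=\mu_q(L')$. Since $(1-\lambda)K'+\lambda L'=K+L$, rescaling gives (i). This is the standard normalization argument.

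For (ii)$\Rightarrow$(iii), fix $K\in\cK_{+,e}^n$ and even $f\in C^2(\bS^{n-1})$. For $|t|$ small, $h_K+tf$ is the support function of a $C^2_+$ origin-symmetric body $K_t$, and $t\mapsto h_{K_t}$ is affine. By (ii) and the identity $h_{(1-\lambda)K_s+\lambda K_t}=(1-\lambda)h_{K_s}+\lambda h_{K_t}$, the function
\[
g(t)=\mu_q(K_t)^{1/q}
\]
is concave near $0$. It is $C^2$ by Lemma~\ref{lem-variation}, so $g''(0)\le0$. Expanding,
\[
g''=\tfrac1q\mu^{1/q-2}\bigl(\mu\mu''-\tfrac{q-1}{q}(\mu')^2\bigr),
\]
which is exactly \eqref{eq-secondvar-equiv}. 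The step (iii)$\Leftrightarrow$(iv) is then immediate: substitute the first and second variation formulas of Lemma~\ref{lem-variation} with $\phi=f\circ\nu$.

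The main work is (iii)$\Rightarrow$(ii), which goes from local to global.
\begin{enumerate}[(a)]
\item First take $K,L\in\cK_{+,e}^n$ and let $K_\lambda=(1-\lambda)K+\lambda L$. Each $K_\lambda$ lies in $\cK_{+,e}^n$, since its support function is a convex combination of $C^2_+$ support functions and the radii-of-curvature matrix $\nabla^2h+h\,\Id$ stays positive definite.
\item At each $\lambda_0$, write $h_{K_\lambda}=h_{K_{\lambda_0}}+(\lambda-\lambda_0)(h_L-h_K)$ and apply (iii) at $K_{\lambda_0}$ with $f=h_L-h_K$, which is even. This gives $g''(\lambda_0)\le0$ for $g(\lambda)=\mu_q(K_\lambda)^{1/q}$, so $g$ is concave on $[0,1]$, which is (ii) for $C^2_+$ bodies.
\item For general origin-symmetric convex bodies, approximate $K,L$ in the Hausdorff metric by bodies in $\cK_{+,e}^n$; the standard smoothing of support functions preserves symmetry.
\item Pass to the limit using continuity of $\mu_q$ under Hausdorff convergence. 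Since $q>0$, the weight $|x|^{q-n}$ is locally integrable, and all bodies contain a common ball about the origin.
\end{enumerate}
The delicate points are the regularity that makes $g$ twice differentiable, which follows from the variation formula being valid uniformly along the segment, and the continuity of $\mu_q$ in the approximation step.

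For general bodies in $\cK_o^n$ the argument is identical: drop evenness of $f$ and symmetry of the approximants. The only extra care is that the approximating bodies must contain the origin in their interiors, and this holds for Hausdorff-close approximants of a body in $\cK_o^n$.
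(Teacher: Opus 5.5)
Your proposal is correct and follows the standard route the paper relies on. The paper states this lemma without a separate proof, but it uses exactly your local-to-global scheme (second variation along the affine segment $h_{K_\lambda}$, normalization by homogeneity, then Hausdorff approximation) in the proof of Theorem~\ref{thm-unconditional}, and it uses your ``concavity forces $g''(0)\le 0$'' direction in contrapositive form in Section~\ref{sec-3}.
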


\subsection{Unconditional bodies}\

A convex body $K\subset \bR^n$ is called unconditional if it is invariant under reflections in the coordinate hyperplanes. Equivalently, its support function satisfies
\[
h_K(\sigma_1 u_1,\dots,\sigma_n u_n)=h_K(u_1,\dots,u_n)
\]
for every choice of signs $\sigma_i\in\{\pm1\}$. For an unconditional convex body, its intersection with each orthant is determined by its part in the positive orthant; see, e.g., \cite{BL95,Sar15,HI26}.

In particular, if $K$ is unconditional and $x\in K$, then every $y\in \bR^n$ satisfying $|y_i|\le |x_i|$ for all $i$ also belongs to $K$. This monotonicity property will be used repeatedly in the unconditional case.

\section{Failure for arbitrary convex bodies when $q>n$}\label{sec-3}

In this section, we prove Theorem~\ref{thm-nonsym-fail}. The argument is based on testing the local $1/q$-concavity of $\mu_q$ at the Euclidean ball.

Let $K=B_2^n$. On $\partial K=\bS^{n-1}$, we have
\[
    \II=\Id,\qquad H=n-1,\qquad \nu=x,\qquad |x|=1.
\]
Moreover, for the density $d\mu_q=|x|^{q-n}\dd x$,
\[
    W(x)=(n-q)\log|x|,\qquad \nabla W=(n-q)x,
\]
and hence
\[
    H_W=H-\langle \nabla W,\nu\rangle=(n-1)-(n-q)=q-1.
\]

Therefore, by Lemma \ref{lem-variation}, \eqref{local-concavity} is equivalent to
\begin{equation}\label{ball-quadratic}
    \int_{\bS^{n-1}}|\nabla_{\bS^{n-1}}\phi|^2\,\dd u
    -(q-1)\int_{\bS^{n-1}}(\phi-\bar\phi)^2\,\dd u\ge 0,
\end{equation}
where $\bar\phi$ is given by
\[
    \bar\phi=\frac{1}{|\bS^{n-1}|}\int_{\bS^{n-1}}\phi\,\dd u.
\]

Recall that the first nonzero eigenvalue of $-\Delta_{\bS^{n-1}}$ is $n-1$, and its eigenspace consists of the restrictions of linear functions. Hence \eqref{ball-quadratic} holds for all smooth $\phi\in C^2(\bS^{n-1})$ if and only if $ q-1\le n-1$, that is, $q\le n.$

\begin{proof}[Proof of Theorem~\ref{thm-nonsym-fail}]
Assume $q>n$. Choose a nonzero first spherical harmonic $\phi(u)=\langle u,e_1\rangle$. Then
\[
    \int_{\bS^{n-1}}|\nabla_{\bS^{n-1}}\phi|^2\,\dd u
    =(n-1)\int_{\bS^{n-1}}\phi^2\,\dd u
    <(q-1)\int_{\bS^{n-1}}\phi^2\,\dd u.
\]
Thus the second variation of $\mu_q(K_t)^{1/q}$ at the ball in the direction $\phi$ is positive, so $t\mapsto \mu_q(K_t)^{1/q}$ is locally convex at $t=0$. Consequently, for sufficiently small $t>0$,
\[
    \mu_q\!\left(\frac{K_t+K_{-t}}{2}\right)^{1/q}
    <
    \frac12\mu_q(K_t)^{1/q}+\frac12\mu_q(K_{-t})^{1/q}.
\]
By the $q$-homogeneity of $\mu_q$, this is equivalent to
\[
    \mu_q(K_t+K_{-t})^{1/q}
    <
    \mu_q(K_t)^{1/q}+\mu_q(K_{-t})^{1/q}.
\]
Using \eqref{polar-dual-q}, we obtain the failure of the Brunn-Minkowski inequality for $\tV_q$. This completes the proof.
\end{proof}

\begin{rmk}
The perturbation above is odd, so it does not preserve origin-symmetry and only gives counterexamples for arbitrary convex bodies.

In the origin-symmetric class, one must use even perturbations. Then the first nonconstant even eigenvalue of $-\Delta_{\bS^{n-1}}$ is $2n$, so the Euclidean ball is locally stable up to $q\le 2n+1$. This local threshold was identified by Xu and Yang \cite{XY26}, but it is not the global sharp threshold.
\end{rmk}

\section{Symmetric counterexamples for $q>n+2$}\label{sec-4}

In this section, we prove Theorem~\ref{thm-sym-fail} by a dimension reduction argument. Throughout this section, we write $\mu_q^{(n)}$ for the $q$-homogeneous measure on $\R^n$ with density
\[
    d\mu_q^{(n)}(x)=|x|^{q-n}\dd x.
\]

\subsection{A planar rectangle calculation}\

Let $Q>0$ and consider the planar weighted measure
\[
    d\mu_Q^{(2)}(x,y)=(x^2+y^2)^{\frac{Q-2}{2}}\,\dd x\,\dd y.
\]
For small $t$, define the origin-symmetric rectangles
\[
    R_t=[-1-t,1+t]\times[-1+t,1-t],
\]
and set
\[
    F_Q(t)=\mu_Q^{(2)}(R_t).
\]
Then
\[
    \frac{R_t+R_{-t}}{2}=R_0.
\]

\begin{lem}\label{lem-planar-rectangle}
If $Q>4$, then $F_Q'(0)=0$ and $F_Q''(0)>0$. In particular, $F_Q(t)^{1/Q}$ is locally convex at $t=0$.
\end{lem}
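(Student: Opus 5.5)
The plan is to reduce everything to the two-variable function
\[
G(a,b)=\mu_Q^{(2)}\bigl([-a,a]\times[-b,b]\bigr)=\int_{-b}^{b}\int_{-a}^{a}(x^2+y^2)^{\frac{Q-2}{2}}\dd x\,\dd y ,
\]
so that $F_Q(t)=G(1+t,1-t)$. Two structural facts about $G$ carry most of the argument.
\begin{enumerate}[(a)]
\item $G(a,b)=G(b,a)$.
\item $G$ is positively homogeneous of degree $Q$.
\end{enumerate}

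\textbf{First derivative.} By the chain rule, $F_Q'(0)=G_a(1,1)-G_b(1,1)$, which vanishes by the symmetry (a).

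\textbf{Second derivative, reduced to one integral.} Again by the chain rule and symmetry,
\[
F_Q''(0)=G_{aa}-2G_{ab}+G_{bb}=2(G_{aa}-G_{ab}),
\]
all evaluated at $(1,1)$. To eliminate $G_{aa}$, differentiate Euler's relation $aG_a+bG_b=QG$ in $a$. This gives $aG_{aa}+bG_{ab}=(Q-1)G_a$, so at $(1,1)$ we have $G_{aa}=(Q-1)G_a-G_{ab}$.

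The remaining derivatives are computed directly, by differentiating the integral in its limits. Since $G_a(a,b)=2\int_{-b}^{b}(a^2+y^2)^{\frac{Q-2}{2}}\dd y$, we get
\[
G_a(1,1)=4\int_0^1(1+y^2)^{m}\dd y, \qquad G_{ab}(1,1)=4\cdot 2^{m},
\]
where $m=\tfrac{Q-2}{2}$. Writing $I(m)=\int_0^1(1+y^2)^m\dd y$ and using $Q-1=2m+1$, this yields
\[
F_Q''(0)=8\bigl((2m+1)I(m)-2^{m+1}\bigr).
\]
A sanity check: at $Q=4$ (so $m=1$) we have $I(1)=4/3$, and the bracket is exactly $0$. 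This is consistent with $Q=4$ being the threshold.

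\textbf{The inequality (the expected main obstacle).} We must show $(2m+1)I(m)>2^{m+1}$ for $m>1$, i.e.\ for $Q>4$. The first step is the recursion obtained by integrating $I(m)$ by parts against $y$:
\[
I(m)=2^m-2m\bigl(I(m)-I(m-1)\bigr), \qquad\text{i.e.}\qquad (2m+1)I(m)=2^m+2mI(m-1).
\]
The goal therefore becomes $2m\,I(m-1)>2^m$, that is, $I(m-1)>2^{m-1}/m$.

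I would obtain this from AM--GM. Since $1+y^2\ge 2y$ and $m-1>0$, we have
\[
(1+y^2)^{m-1}\ge 2^{m-1}y^{m-1}\quad\text{on }[0,1],
\]
with strict inequality for $y<1$. Integrating gives $I(m-1)>2^{m-1}\int_0^1y^{m-1}\dd y=2^{m-1}/m$. Hence $F_Q''(0)>0$ whenever $Q>4$.

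\textbf{Local convexity of $F_Q^{1/Q}$.} Finally,
\[
\bigl(F_Q^{1/Q}\bigr)''=\tfrac1Q F_Q^{1/Q-1}\Bigl(F_Q''+\bigl(\tfrac1Q-1\bigr)\tfrac{(F_Q')^2}{F_Q}\Bigr).
\]
At $t=0$ the term with $F_Q'$ vanishes, so $\bigl(F_Q^{1/Q}\bigr)''(0)>0$. Since $F_Q$ is $C^2$ near $0$, this gives the claimed local convexity.
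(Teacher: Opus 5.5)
Your proof is correct and essentially the same as the paper's. It uses the same side-length function $G$ with $F_Q(t)=G(1+t,1-t)$ and ends with the same bound $1+y^2\ge 2y$. The only difference is bookkeeping: you eliminate $G_{aa}$ via Euler's relation and then undo that with the integration-by-parts recursion, whereas the paper differentiates $G_a$ directly, getting $G_{aa}(1,1)=8m\int_0^1(1+y^2)^{m-1}dy$ and landing at once on your criterion $2m\,I(m-1)>2^m$.
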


\begin{proof}
Write $\alpha=\frac{Q-2}{2}$. Then
\[
    F_Q(t)=\int_{-1-t}^{1+t}\int_{-1+t}^{1-t}(x^2+y^2)^\alpha\,\dd y\,\dd x.
\]

It is convenient to define $a(t)=1+t,\, b(t)=1-t$, and
\[
    G(a,b)=4\int_0^a\int_0^b (x^2+y^2)^\alpha\,\dd y\,\dd x,
\]
so that $F_Q(t)=G(a(t),b(t))$. By differentiating with respect to the side lengths, we get
\[
    \partial_a G(a,b)=4\int_0^b (a^2+y^2)^\alpha\,\dd y,\qquad
    \partial_b G(a,b)=4\int_0^a (x^2+b^2)^\alpha\,\dd x,
\]
\[
    \partial_{aa}G(a,b)=8\alpha a\int_0^b(a^2+y^2)^{\alpha-1}\dd y,\quad
    \partial_{bb}G(a,b)=8\alpha b\int_0^a(x^2+b^2)^{\alpha-1}\dd x,
\]
and
\[
    \partial_{ab}G(a,b)=4(a^2+b^2)^\alpha.
\]
At $t=0$, since $a(0)=b(0)=1$, it follows that
\[
     F_Q'(0)=\partial_aG(1,1)-\partial_bG(1,1)=0,
\]
and
\begin{align*}
    F_Q''(0)
    &=\partial_{aa}G(1,1)-2\partial_{ab}G(1,1)+\partial_{bb}G(1,1)
    = 8\Big(\alpha\int_{-1}^1(1+y^2)^{\alpha-1}\dd y-2^\alpha\Big).
\end{align*}
Therefore,
\begin{equation}\label{planar-condition}
    F_Q''(0)>0
    \quad\Longleftrightarrow\quad
    \alpha\int_{-1}^1(1+y^2)^{\alpha-1}\dd y>2^\alpha.
\end{equation}

Assume now that $Q>4$, equivalently $\alpha>1$. Since $1+y^2\ge 2|y|$ for $|y|\le 1$, with strict inequality except at $|y|=1$, we obtain
\[
    \alpha\int_{-1}^1(1+y^2)^{\alpha-1}\dd y
    >2\alpha\int_0^1(2y)^{\alpha-1}\dd y
    =2^\alpha.
\]
Hence \eqref{planar-condition} holds, so $F_Q''(0)>0$.
Combining with $F_Q'(0)=0$ and $F_Q(0)>0$,
\[
    \left.\frac{\dd^2}{\dd t^2}\right|_{t=0}F_Q(t)^{1/Q}
    =\frac{1}{Q}F_Q(0)^{1/Q-1}F_Q''(0)>0,
\]
which implies that $t\mapsto F_Q(t)^{1/Q}$ is locally convex at $t=0$.
\end{proof}

\subsection{Dimension reduction}\

Let $n\ge2$ and set $d=n-2$. For a planar origin-symmetric convex body $A\subset\R^2$, define
\[
    A_\varepsilon=A\times[-\varepsilon,\varepsilon]^d\subset\R^n.
\]
Let $ Q=q-n+2$. For $x=(y,z)\in\R^2\times\R^d$, we have $|x|^{q-n}=(|y|^2+|z|^2)^{\frac{Q-2}{2}}.$
Therefore
\[
    \mu_q^{(n)}(A_\varepsilon)
    =\int_A\int_{[-\varepsilon,\varepsilon]^d}(|y|^2+|z|^2)^{\frac{Q-2}{2}}\,\dd z\,\dd y.
\]
As $\varepsilon\to 0$, we have
\[
    \int_{[-\varepsilon,\varepsilon]^d}(|y|^2+|z|^2)^{\frac{Q-2}{2}}\,\dd z
    =(2\varepsilon)^d|y|^{Q-2}+o(\varepsilon^d),
\]
uniformly for $y$ in compact subsets of $\R^2$. Integrating in $y$ yields
\begin{equation}\label{dimension-reduction}
    \mu_q^{(n)}(A_\varepsilon)
    =(2\varepsilon)^d\mu_Q^{(2)}(A)+o(\varepsilon^d).
\end{equation}

For the rectangle family above, the same argument applies uniformly in $t$ near $0$. 
Hence, the asymptotic \eqref{dimension-reduction} also holds uniformly together with the first and second derivatives in $t$.

\begin{proof}[Proof of Theorem~\ref{thm-sym-fail}]
Assume $q>n+2$, and denote $Q=q-n+2>4.$ Consider
\[
    K_{\varepsilon,t}=R_t\times[-\varepsilon,\varepsilon]^{n-2}.
\]
By \eqref{dimension-reduction}, we have
\[
    \mu_q^{(n)}(K_{\varepsilon,t})
    =(2\varepsilon)^{n-2}F_Q(t)+o(\varepsilon^{n-2}),
\]
uniformly near $t=0$, together with the first and second derivatives in $t$. 
Lemma~\ref{lem-planar-rectangle} implies that, for all sufficiently small $\varepsilon>0$,
\[
    \left.\frac{\dd^2}{\dd t^2}\right|_{t=0}\mu_q^{(n)}(K_{\varepsilon,t})^{1/q}>0.
\]
Thus $t\mapsto \mu_q^{(n)}(K_{\varepsilon,t})^{1/q}$ is locally convex at $t=0$, and therefore for some sufficiently small nonzero $t$,
\[
    \mu_q^{(n)}(K_{\varepsilon,0})^{1/q}
    <
    \frac12\mu_q^{(n)}(K_{\varepsilon,t})^{1/q}
    +\frac12\mu_q^{(n)}(K_{\varepsilon,-t})^{1/q}.
\]
Since $\frac{1}{2}K_{\varepsilon,t}+\frac{1}{2}K_{\varepsilon,-t}=K_{\varepsilon,0}$, we obtain
\[
    \mu_q^{(n)}(K_{\varepsilon,t}+K_{\varepsilon,-t})^{1/q}
    <
    \mu_q^{(n)}(K_{\varepsilon,t})^{1/q}
    +\mu_q^{(n)}(K_{\varepsilon,-t})^{1/q}.
\]
Thus the inequality fails for origin-symmetric boxes. By smoothing the support functions, the strict inequality persists under Hausdorff approximation by smooth strictly convex origin-symmetric bodies. The proof is complete.
\end{proof}

\begin{rmk}
The theorem reduces the unknown origin-symmetric range in Problem~\ref{problem-SZ} to
\[
    n<q\le n+2.
\]
The endpoint $q=n+2$ will be treated in the next section.
\end{rmk}
\subsection{An \(L_p\) Brunn-Minkowski threshold for \(q>q_p^{(n)}\)}\label{sec-Lp-threshold} \

For \(0<p\le1\), \(0<\lambda<1\), and origin-symmetric convex bodies \(K,L\) containing the origin, define the \(L_p\) geometric combination
\[
(1-\lambda) \cdot K+_{p} \lambda\cdot L
:=\Bigl[\bigl((1-\lambda)h_K^p+\lambda h_L^p\bigr)^{1/p}\Bigr],
\]
and for \(p=0\) the logarithmic combination
\[
(1-\lambda)\cdot K+_{0} \lambda \cdot L:=\bigl[h_K^{1-\lambda}h_L^\lambda\bigr].
\]

Let \(R_t=[-1-t,1+t]\times[-1+t,1-t]\) be the rectangles. A direct calculation gives
\[
\frac{1}{2}\cdot R_t+_p\frac{1}{2}\cdot R_{-t}=r_{p,t}R_0,
\]
where
\[
r_{p,t}
=
\begin{cases}
\Bigl(\frac{(1+t)^p+(1-t)^p}{2}\Bigr)^{1/p}, & 0<p\le1,\\[2mm]
\sqrt{1-t^2}, & p=0.
\end{cases}
\]
For \(Q>2\), define
\[
P(Q)
=
(Q-2)\frac{\displaystyle\int_0^1(1-s^2)(1+s^2)^{(Q-4)/2}\,ds}
{\displaystyle\int_0^1(1+s^2)^{(Q-2)/2}\,ds}.
\]
The integrand is strictly increasing in \(Q\), hence \(P\) is continuous, strictly increasing on \([2,4]\), and satisfies \(P(2)=0\), \(P(4)=1\). Thus for every \(0\le p\le1\) there is a unique \(Q_p\in[2,4]\) such that \(P(Q_p)=p\).

\begin{prop}\label{prop-Lp-fail-threshold}
Let \(0\le p\le1\), and set
\[
q_p^{(n)}:=n+Q_p-2.
\]
If \(q>q_p^{(n)}\), then there exist origin-symmetric \(C^\infty_+\) convex bodies
\(K,L\subset\R^n\) such that, for \(0<p\le1\),
\[
\tV_q\bigl(\frac{1}{2}\cdot K+_p\frac{1}{2}\cdot L\bigr)^{\frac{p}{q}}
<
\frac12\tV_q(K)^{\frac{p}{q}}
+\frac12\tV_q(L)^{\frac{p}{q}},
\]
while for \(p=0\),
\[
\tV_q\bigl(\frac{1}{2}\cdot K+_0\frac{1}{2}\cdot L\bigr)
<
\tV_q(K)^{\frac12}\tV_q(L)^{\frac12}.
\]
Hence, the \(L_p\) Brunn-Minkowski inequality for \(\tV_q\) fails in
\(\mathcal{K}_e^n\) for every \(q>q_p^{(n)}\).
\end{prop}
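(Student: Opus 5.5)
The plan is to repeat the two-step scheme of the proof of Theorem~\ref{thm-sym-fail}: first a second-order computation for the planar rectangles $R_t$, now with $L_p$ combinations, and then the dimension reduction \eqref{dimension-reduction} followed by smoothing.

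\emph{Step 1: reduction to a sign condition.} Set $F_Q(t)=\mu_Q^{(2)}(R_t)$ as in Lemma~\ref{lem-planar-rectangle}. By the symmetry $(x,y)\mapsto(y,x)$ we have $F_Q(t)=F_Q(-t)$, so
\[
F_Q(\pm t)=F_Q(0)+\tfrac12F_Q''(0)t^2+o(t^2).
\]
The combination $\frac12\cdot R_t+_p\frac12\cdot R_{-t}$ equals $r_{p,t}R_0$. By $Q$-homogeneity its measure is $r_{p,t}^QF_Q(0)$.

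For $0<p\le1$, expanding $r_{p,t}^p=\frac{(1+t)^p+(1-t)^p}{2}$ gives $1+\frac{p(p-1)}{2}t^2+o(t^2)$. The average $\frac12\sum_\pm F_Q(\pm t)^{p/Q}$ equals
\[
F_Q(0)^{p/Q}\Bigl(1+\frac{p}{2Q}\frac{F_Q''(0)}{F_Q(0)}t^2+o(t^2)\Bigr).
\]
So the strict failure at small $t$ follows from
\begin{equation*}
p-1<\frac{F_Q''(0)}{QF_Q(0)}.
\end{equation*}
For $p=0$ we take logarithms and use $\log r_{0,t}=-\frac{t^2}{2}+o(t^2)$. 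This yields the same condition with $p=0$. Thus in all cases it suffices to prove $p<1+\frac{F_Q''(0)}{QF_Q(0)}$.

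\emph{Step 2: identify $1+\frac{F_Q''}{QF_Q}$ with $P(Q)$.} Write $\alpha=\frac{Q-2}{2}$. The proof of Lemma~\ref{lem-planar-rectangle} gives
\[
F_Q''(0)=8\Bigl(2\alpha\int_0^1(1+s^2)^{\alpha-1}ds-2^\alpha\Bigr).
\]
For $QF_Q(0)$ I would use Euler's relation, namely the $s$-derivative of $\mu_Q^{(2)}(sR_0)$ at $s=1$. This is the boundary integral of $\langle x,\nu\rangle|x|^{Q-2}$ over the four sides, each with support value $1$, which gives
\[
QF_Q(0)=8\int_0^1(1+s^2)^\alpha ds.
\]
Next integrate by parts: $2^\alpha=\int_0^1\frac{d}{ds}\bigl[s(1+s^2)^\alpha\bigr]ds=\int_0^1(1+s^2)^\alpha ds+2\alpha\int_0^1s^2(1+s^2)^{\alpha-1}ds$. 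Substituting gives
\[
F_Q''(0)+QF_Q(0)=16\alpha\int_0^1(1-s^2)(1+s^2)^{\alpha-1}ds.
\]
Hence $1+\frac{F_Q''(0)}{QF_Q(0)}=P(Q)$. The needed inequality becomes $P(Q)>p$.

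For $Q\in(Q_p,4]$ this follows from strict monotonicity of $P$ and $P(Q_p)=p$. For $Q>4$ it follows from Lemma~\ref{lem-planar-rectangle}: there $F_Q''(0)>0$, so $P(Q)>1\ge p$.

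\emph{Step 3: lift to $\R^n$ and smooth.} Let $q>q_p^{(n)}$, so that $Q=q-n+2>Q_p$. Take $K_{\varepsilon,\pm t}=R_{\pm t}\times[-\varepsilon,\varepsilon]^{n-2}$. Support functions of products add coordinatewise in the sense of $h_{A\times B}(u,v)=h_A(u)+h_B(v)$, so the $L_p$ combination of $K_{\varepsilon,t}$ and $K_{\varepsilon,-t}$ is not exactly a product. I would therefore compare instead with the inclusion
\[
\tfrac12\cdot K_{\varepsilon,t}+_p\tfrac12\cdot K_{\varepsilon,-t}\subseteq r_{p,t}R_0\times[-\varepsilon,\varepsilon]^{n-2}.
\]
This holds because the box is the Wulff shape of its own support function, which dominates the $L_p$ mean of the two support functions. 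The inclusion is the right direction for proving failure, since $\mu_q$ is monotone under inclusion. Then \eqref{dimension-reduction} applies, uniformly in $t$ with two $t$-derivatives, and transfers the strict second-order inequality from Step~1 to $\mu_q^{(n)}$ for small $\varepsilon$.

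Finally, approximate the boxes by origin-symmetric $C^\infty_+$ bodies in the Hausdorff metric. The $L_p$ combination (for $p=0$, the Wulff shape of $h_K^{1/2}h_L^{1/2}$) and $\tV_q$ are continuous under this convergence, so the strict inequality persists.

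\emph{Main obstacle.} I expect the hardest part to be the uniformity in $t$, up to second derivatives, of the $\varepsilon\to0$ asymptotics, exactly as in the proof of Theorem~\ref{thm-sym-fail}. The main risk is the inclusion step for the $L_p$ combination of products. If the inclusion fails, I would first try replacing the cube factor by a Euclidean ball $\varepsilon B_2^{n-2}$, or run the second-variation computation directly on $\R^n$.
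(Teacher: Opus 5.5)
Your Steps 1--2 match the paper's planar computation. Your condition $p-1<F_Q''(0)/(QF_Q(0))$, combined with the identity $1+F_Q''(0)/(QF_Q(0))=P(Q)$, is exactly the paper's $F_{p,Q}''(0)=4Q\bigl(P(Q)-p\bigr)\int_0^1(1+s^2)^{(Q-2)/2}\,ds>0$. You derive that identity correctly, whereas the paper only states it. Your use of Lemma~\ref{lem-planar-rectangle} to get $P(Q)>1\ge p$ for $Q>4$ also covers a range that monotonicity of $P$ on $[2,4]$ does not.

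\textbf{The flawed step is the justification of the inclusion in Step~3.} Write $f=M_p\bigl(h_{K_{\varepsilon,t}},h_{K_{\varepsilon,-t}}\bigr)$ for the power mean with weights $\tfrac12$, and $h$ for the support function of $B:=r_{p,t}R_0\times[-\varepsilon,\varepsilon]^{n-2}$. You claim $h\ge f$, but this is backwards. For $0\le p\le1$ the power mean is concave and $1$-homogeneous, hence superadditive. Since $h_{K_{\varepsilon,\pm t}}(u)=(1\pm t)|u_1|+(1\mp t)|u_2|+\varepsilon\sum_{i\ge3}|u_i|$, this gives $f\ge h$. For instance, at $u=(e_1+e_2)/\sqrt2$ one has $f(u)=\sqrt2>\sqrt2\,r_{p,t}=h(u)$ whenever $p<1$ and $t\neq0$.

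The inclusion you need is still true, for a different reason. The Wulff shape of $f$ lies in the slabs $\{\pm x_i\le f(\pm e_i)\}$, and $f(\pm e_1)=f(\pm e_2)=r_{p,t}$, while $f(\pm e_i)=\varepsilon$ for $i\ge3$. Together with $f\ge h$, this shows $\tfrac12\cdot K_{\varepsilon,t}+_p\tfrac12\cdot K_{\varepsilon,-t}=B$ exactly, which is the product structure the paper uses. So your remark that the combination is ``not exactly a product'' is mistaken, and your fallback plans are unnecessary.

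The ``main obstacle'' you identify can also be avoided, as the paper does. Because $\tV_q(K_{\varepsilon,t})=\tV_q(K_{\varepsilon,-t})$, for every $0\le p\le1$ the target inequality is simply $\tV_q(B)<\tV_q(K_{\varepsilon,t})$; the exponent ($p/q$ rather than $p/Q$) plays no role. Fix one small $t_0\neq0$ with $\mu_Q^{(2)}(r_{p,t_0}R_0)<\mu_Q^{(2)}(R_{t_0})$. Then apply the zeroth-order asymptotic \eqref{dimension-reduction} to the two fixed planar bodies $r_{p,t_0}R_0$ and $R_{t_0}$, and let $\varepsilon\to0$. No uniformity in $t$ and no control of $t$-derivatives is needed.
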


\begin{proof}
We first prove the planar case \(n=2\). Write \(Q=q\); the condition
\(q>q_p^{(2)}\) is \(Q>Q_p\), equivalently \(p<P(Q)\). Let
\[
F_{p,Q}(t)=\tV_Q(R_t)-r_{p,t}^Q\tV_Q(R_0).
\]
By Lemma~\ref{lem-planar-rectangle} and \(R_t+_pR_{-t}=r_{p,t}R_0\), we obtain $F_{p,Q}'(0)=0$ and
\[
F_{p,Q}''(0)=4Q\bigl(P(Q)-p\bigr)
\int_0^1(1+s^2)^{(Q-2)/2}\,ds.
\]
Since \(p<P(Q)\), we have \(F_{p,Q}''(0)>0\). Hence for all sufficiently small
nonzero \(t\),
\[
\tV_Q(R_t)>r_{p,t}^Q\tV_Q(R_0)=\tV_Q(R_t+_pR_{-t}).
\]
Since \(R_{-t}\) is a rotation of \(R_t\),
\(\tV_Q(R_{-t})=\tV_Q(R_t)\). Therefore, for \(0<p\le1\),
\[
\tV_Q\bigl(\frac{1}{2}\cdot R_t+_p\frac{1}{2}\cdot R_{-t}\bigr)^{\frac{p}{Q}}
<
\tV_Q(R_t)^{\frac{p}{Q}}
=
\frac12\tV_Q(R_t)^{\frac{p}{Q}}
+\frac12\tV_Q(R_{-t})^{\frac{p}{Q}},
\]
while for \(p=0\),
\[
\tV_Q\bigl(\frac{1}{2}\cdot R_t+_0\frac{1}{2}\cdot R_{-t}\bigr)
<
\tV_Q(R_t)
=
\tV_Q(R_t)^{\frac12}\tV_Q(R_{-t})^{\frac12}.
\]
This proves the desired planar failure.

For \(n\ge2\), set \(Q=q-n+2\) and consider
\[
K_{t,\varepsilon}=R_t\times[-\varepsilon,\varepsilon]^{n-2}.
\]
The dimension reduction argument of Section~\ref{sec-4} gives
\[
\tV_q(K_{t,\varepsilon})
=
\frac{2q}{nQ}(2\varepsilon)^{n-2}\tV_Q(R_t)
+o(\varepsilon^{n-2}),
\]
and the \(L_p\) combination is compatible with the product structure:
\[
K_{t,\varepsilon}+_pK_{-t,\varepsilon}
=
(r_{p,t}R_0)\times[-\varepsilon,\varepsilon]^{n-2}.
\]
Thus
\[
\tV_q\bigl(\frac{1}{2}\cdot K_{t,\varepsilon}+_p\frac{1}{2}\cdot K_{-t,\varepsilon}\bigr)
=
\frac{2q}{nQ}(2\varepsilon)^{n-2}\tV_Q(r_{p,t}R_0)
+o(\varepsilon^{n-2}).
\]
The strict inequality in the planar case is preserved for sufficiently small
\(\varepsilon>0\). Smoothing the boxes yields the desired \(C^\infty_+\) bodies.
This completes the proof.
\end{proof}

\begin{rmk}
For \(p=0\), we have \(Q_0=2\) and hence \(q_0^{(n)}=n\). In this case, the threshold in Proposition~\ref{prop-Lp-fail-threshold} agrees with the threshold in Xiong and Yang \cite[Theorem 1.4]{XiongY26}.
For \(p=1\), \(Q_1=4\), we get \(q_1^{(n)}=n+2\), which is precisely the range covered by Theorem~\ref{thm-sym-fail}.
\end{rmk}

\section{The endpoint $q=n+2$ and Hadwiger's inequality}\label{sec-5}

For a convex body $K\subset \R^n$, let $g(K)=\frac{1}{|K|}\int_K x\,\dd x$ be its centroid. Its polar moment of inertia with respect to the centroid is
\[
    I(K)=\int_K |x-g(K)|^2\,\dd x.
\]
More generally, if $E=u^\perp$ is a hyperplane through the origin, where $u\in S^{n-1}$, define the planar moment of inertia
\[
    T_E(K)=\int_K \dist(x,E)^2\,\dd x=\int_K \langle x,u\rangle^2\,\dd x
\]
provided that the centroid of $K$ is at the origin. Hadwiger \cite{Had56} proved the Brunn-Minkowski inequality for the polar moment of inertia.

\begin{thm}[\cite{Had56}]\label{thm-Hadwiger}
For convex bodies $K,L\subset\R^n$,
\[
    I(K+L)^{\frac1{n+2}}\ge I(K)^{\frac1{n+2}}+I(L)^{\frac1{n+2}}.
\]
Equality holds if and only if $K$ and $L$ are homothetic.
\end{thm}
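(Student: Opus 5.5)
Theorem~\ref{thm-Hadwiger} is a classical result that the paper quotes from \cite{Had56}; I have not reconstructed Hadwiger's argument. The plan below is a transport argument with one unverified step (the pointwise inequality in the last paragraph). Since $I$ is translation invariant, I would first translate $K$ and $L$ so that $g(K)=g(L)=0$. Then $g(K+L)=0$ as well, because $(1-\lambda)K+\lambda L$ has centroid $(1-\lambda)g(K)+\lambda g(L)$ at least under the transport parametrisation below. Using the variance identity
\[
I(M)=\min_{c\in\R^n}\int_M|x-c|^2\,dx,
\]
it suffices to prove, for every $c$ and every $\lambda\in(0,1)$,
\[
\int_{(1-\lambda)K+\lambda L}|z-c|^2\,dz\ \ge\ \Bigl((1-\lambda)I(K)^{\frac1{n+2}}+\lambda I(L)^{\frac1{n+2}}\Bigr)^{n+2}.
\]
By the $(n+2)$-homogeneity of $I$, this concavity statement is equivalent to the Minkowski-sum form.

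\textbf{Transport step.} I would normalise $|K|=|L|=1$. This is legitimate: replacing $L$ by a dilate $sL$ rescales $I(L)$ by $s^{n+2}$, and the full statement follows from the normalised one by a Brunn--Minkowski-type reparametrisation of $\lambda$, as in the classical proof. Let $T=\nabla\varphi\colon K\to L$ be the Brenier map pushing $\mathbf 1_K\,dx$ forward to $\mathbf 1_L\,dx$. Then $T_\lambda=(1-\lambda)\Id+\lambda T$ maps $K$ injectively into $(1-\lambda)K+\lambda L$. Change of variables gives
\[
\int_{(1-\lambda)K+\lambda L}|z-c|^2\,dz\ \ge\ \int_K |T_\lambda x-c|^2\det\bigl((1-\lambda)\Id+\lambda D^2\varphi\bigr)\,dx .
\]
By Minkowski's determinant inequality, $\det\bigl((1-\lambda)\Id+\lambda D^2\varphi\bigr)^{1/n}\ge (1-\lambda)+\lambda(\det D^2\varphi)^{1/n}$.

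\textbf{Main obstacle.} The difficulty is the factor $|T_\lambda x-c|^2$: unlike the density in the Borell--Brascamp--Lieb setting, it is not a pointwise $\tfrac12$-concave function of the pair $(x,Tx)$ unless the two displacements point in compatible directions. My first attempt would be to replace $|\cdot|^2$ by directional quantities $\langle\cdot,u\rangle^2$ and average over $u\in\bS^{n-1}$. For each fixed $u$, the scalar $t\mapsto\langle x,u\rangle$ is affine along the segment $[x,Tx]$, so $|\langle T_\lambda x,u\rangle|^{2/2}$ is concave when $\langle x,u\rangle$ and $\langle Tx,u\rangle$ have the same sign. To guarantee that sign condition I would replace the Brenier map by a monotone, Knothe-type map adapted to a frame containing $u$. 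Combining this directional concavity with the determinant bound through the Borell--Brascamp--Lieb inequality with exponents $\tfrac12$ and $\tfrac1n$ yields exponent $\tfrac1{n+2}$.

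\textbf{Reassembling over directions.} Summing these directional bounds over $u$ is not immediate, because reverse Minkowski summation goes the wrong way. I would instead apply the Borell--Brascamp--Lieb argument directly to the product structure $(x,w)\in\R^n\times\R^2$ with weight $\mathbf 1_{\{|w|\le |x|\}}$, i.e.\ an $(n+2)$-dimensional lifting. Verifying that this lifting is compatible with the Minkowski combination is exactly the step I expect to fail or need Hadwiger's own Steiner-symmetrisation idea. Should it fail, the fallback is to reproduce Hadwiger's proof \cite{Had56} via Steiner symmetrisation, which preserves the centred moment structure. Equality analysis would then follow from the equality case of Minkowski's determinant inequality, forcing $D^2\varphi$ to be a constant multiple of $\Id$, hence homothety.
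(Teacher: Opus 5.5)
Your proposal is not yet a proof. The two steps you flag as unverified are exactly where the essential ideas are missing.

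\emph{Directional step.} For fixed $u$, concavity of $z\mapsto|\langle z,u\rangle|$ along $[x,Tx]$ needs $\langle x,u\rangle$ and $\langle Tx,u\rangle$ to have the same sign. A Knothe map that is monotone in the $u$-coordinate gives this only if the hyperplane $\{\langle x,u\rangle=0\}$ splits the two transported measures in the same proportion, and nothing in your setup arranges that. Without it, the inequality about a fixed hyperplane is simply false: replacing $K,L$ by $K+\tau u$, $L-\tau u$ leaves $K+L$ unchanged, while $\int_K\langle x,u\rangle^2dx$ and $\int_L\langle y,u\rangle^2dy$ tend to infinity as $\tau\to\infty$.

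The paper resolves this in two moves.
\begin{enumerate}[(i)]
\item It proves a one-sided inequality inside a half-space via the lifting $M^*=\{(x,s,t):x\in M,\ 0\le s,t\le\langle x,u\rangle\}\subset\R^{n+2}$. This set is convex, satisfies $|M^*|=\int_M\langle x,u\rangle^2dx$, and is compatible with Minkowski addition precisely because $\langle x,u\rangle$ is linear and nonnegative there.
\item With $S=\alpha K+\beta L$ placed so that $g(S)=0$ and $E=u^\perp$, it translates $K$ by $+\tau\beta u$ and $L$ by $-\tau\alpha u$, which leaves $S$ unchanged, until the ratios $T^E_+/T^E_-$ of one-sided moments coincide. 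It then combines the two half-space inequalities, which is lossless because the ratios agree, and returns to centred moments via $T_E(M)\ge T(M,u)$.
\end{enumerate}

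\emph{Reassembly over directions.} You correctly note that summing directional inequalities runs against Minkowski's inequality in $\ell^{n+2}$. The lifting with weight $\mathbf 1_{\{|w|\le|x|\}}$ cannot fix this. The set $\{(x,w):x\in K,\ |w|\le|x|\}$ is not convex when $0\in\operatorname{int}K$, and Minkowski compatibility would require $|\alpha x+\beta y|\ge\alpha|x|+\beta|y|$, which fails. More fundamentally, no argument about a fixed centre can work, because it would yield
$\bigl(\int_{K+L}|z|^2dz\bigr)^{1/(n+2)}\ge\bigl(\int_K|x|^2dx\bigr)^{1/(n+2)}+\bigl(\int_L|y|^2dy\bigr)^{1/(n+2)}$.
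This fails for $K=B_2^n+te_1$, $L=B_2^n-te_1$, which is the mechanism behind Theorem~\ref{thm-nonsym-fail} at $q=n+2$.

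The missing idea is to choose the frame so that the ratios are constant. After normalising $I(K)=I(L)=1$ and centring $K$ and $L$, the matrix $M_K-M_L$, where $M_K=\int_Kxx^T\,dx$, is trace-free. Hence some orthonormal basis $u_1,\dots,u_n$ satisfies $u_i^T(M_K-M_L)u_i=0$, so $T(K,u_i)=T(L,u_i)$ for each $i$. The directional inequality then gives $T(\alpha K+\beta L,u_i)\ge(\alpha+\beta)^{n+2}T(K,u_i)$, and summing over $i$ yields $I(\alpha K+\beta L)\ge(\alpha+\beta)^{n+2}$ with no loss.

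Three smaller points:
\begin{enumerate}[(i)]
\item Since $T(\cdot,u)$ is taken about each body's own centroid, $g(\alpha K+\beta L)$ is never needed. Your claim that $g(K+L)=0$ when $g(K)=g(L)=0$ is false, because the centroid is not Minkowski additive; your variance reduction makes it dispensable anyway.
\item The Steiner fallback points the wrong way: Steiner symmetrisation decreases $I$, so reducing to symmetrals only weakens the right-hand side.
\item The equality analysis via Minkowski's determinant inequality is moot until the inequality itself is established.
\end{enumerate}
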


We include a brief proof for completeness.

\begin{proof}[Proof of Theorem~\ref{thm-Hadwiger}]
We divide the argument into three steps.

\smallskip
\noindent\emph{Step 1: one-sided planar moments.}
Fix $u\in \bS^{n-1}$ and write $H^+=\{x\in\bR^{n}:\langle x,u\rangle\ge0\}$. For a convex body $M\subset H^+$, define
\[
    T_+(M)=\int_M \langle x,u\rangle^2\,\dd x.
\]
Embed $\R^n$ into $\R^{n+2}=\R^n\times\R^2$ and set
\[
    M^*=\{(x,s,t):x\in M,\ 0\le s\le \langle x,u\rangle,\ 0\le t\le \langle x,u\rangle\}.
\]
Since $\langle x,u\rangle\ge0$ on $M$, the set $M^*$ is convex, and
\[
    |M^*|
    =\int_M\int_0^{\langle x,u\rangle}\int_0^{\langle x,u\rangle}\dd s\,\dd t\,\dd x
    =\int_M \langle x,u\rangle^2\,\dd x
    =T_+(M).
\]
Moreover, for convex bodies $M,N\subset H^+$ and $\alpha,\beta\ge0$,
\[
    (\alpha M+\beta N)^*=\alpha M^*+\beta N^*.
\]
 
Hence the Brunn-Minkowski inequality in $\R^{n+2}$ gives
\[
    T_+(\alpha M+\beta N)^{\frac1{n+2}}
    \ge \alpha T_+(M)^{\frac1{n+2}}+\beta T_+(N)^{\frac1{n+2}}.
\]

\smallskip
\noindent\emph{Step 2: planar moments.}
For $u\in \bS^{n-1}$ and a convex body $M$, let
\[
    T(M,u)=\int_M \langle x-g(M),u\rangle^2\,\dd x
\]
be the planar moment of inertia in direction $u$. We claim that
\begin{equation}\label{eq-planar-Hadwiger}
    T(\alpha K+\beta L,u)^{\frac1{n+2}}
    \ge \alpha T(K,u)^{\frac1{n+2}}+\beta T(L,u)^{\frac1{n+2}}
\end{equation}
for all convex bodies $K,L$ and all $\alpha,\beta\ge0$.

Set $S=\alpha K+\beta L$, and let $E=u^\perp+g(S)$ be the hyperplane orthogonal to $u$ through the centroid of $S$. Write $H^\pm$ for the two half-spaces bounded by $E$, and define
\[
    T_\pm^E(M)=\int_{M\cap H^\pm}\dist(x,E)^2\,\dd x.
\]
Then $T(S,u)=T_E(S)=T_+^E(S)+T_-^E(S)$. Note that for any convex body $M$,
\begin{equation}\label{TE-relation}
    T_E(M)=\int_{M}\langle x-g(S),u\rangle^2\,\dd x\geq \int_{M}\langle x-g(M),u\rangle^2\,\dd x=T(M,u).
\end{equation}

Now translate $K$ and $L$ in opposite $\pm u$-directions:
\[
    K_\tau=K+\tau\beta u,\qquad L_\tau=L-\tau\alpha u.
\]
Then $\alpha K_\tau+\beta L_\tau=S$ for all $\tau\in\R$.
As $\tau$ varies, $K_\tau$ and $L_\tau$ move in opposite directions across the fixed plane $E$, so the ratios $T_+^E(K_\tau)/T_-^E(K_\tau)$ and $T_+^E(L_\tau)/T_-^E(L_\tau)$ vary continuously, the former from $0$ to $\infty$ and the latter from $\infty$ to $0$. Hence for some $\tau_0\in\mathbb R$,
\[
\frac{T_+^E(K_{\tau_0})}{T_-^E(K_{\tau_0})}
=
\frac{T_+^E(L_{\tau_0})}{T_-^E(L_{\tau_0})}.
\]
Since $S\cap H^\pm \supset \alpha(K_{\tau_0}\cap H^\pm)+\beta(L_{\tau_0}\cap H^\pm)$,
Step~1 applied in each half-space gives
\[
    T_\pm^E(S)^{\frac1{n+2}}
    \ge \alpha T_\pm^E(K_{\tau_0})^{\frac1{n+2}}
      + \beta T_\pm^E(L_{\tau_0})^{\frac1{n+2}}.
\]
Hence
\[
 T(S,u)^{\frac1{n+2}}=T_E(S)^{\frac1{n+2}}
    \ge \alpha T_E(K_{\tau_0})^{\frac1{n+2}}
      + \beta T_E(L_{\tau_0})^{\frac1{n+2}}.
\]

Since $T(\cdot,u)$ is invariant under translation, by \eqref{TE-relation}, we obtain
\[
T_E(K_{\tau_0})\ge T(K_{\tau_0},u)=T(K,u), \qquad T_E(L_{\tau_0})\ge T(L_{\tau_0},u)=T(L,u).
\]
Therefore
\[
T(S,u)^{\frac1{n+2}}
\ge \alpha T(K,u)^{\frac1{n+2}}+\beta T(L,u)^{\frac1{n+2}},
\]
which proves \eqref{eq-planar-Hadwiger}.

\smallskip
\noindent\emph{Step 3: polar moments.}
By the homogeneity and translation invariance of \(I\), we may assume that $I(K)=I(L)=1$, and the centroids of \(K\) and \(L\) are at the origin.

For $u\in\bS^{n-1}$, define the quadratic form
\[f(u):=T_{u^\perp}(K)-T_{u^\perp}(L)=u^T(M_K-M_L)u,\]
where $M_K=\int_K xx^T\,\dd x$ and $M_L=\int_L xx^T\,\dd x$. Since $ \tr (M_K-M_L)=I(K)-I(L)=0$, there exists an orthonormal basis \( \{u_i\}_{i=1}^n\) such that
\[
    f(u_i)= u_i^T (M_K-M_L) u_i=0,\qquad i=1,\dots,n.
\]
Now apply \eqref{eq-planar-Hadwiger} to the hyperplanes $u_i^\perp$:
\[
    T_{u_i^\perp}(K+L)^{\frac1{n+2}}
    \ge T_{u_i^\perp}(K)^{\frac1{n+2}}+T_{u_i^\perp}(L)^{\frac1{n+2}}.
\]
Summing over $i$, using $T_{u_i^\perp}(K)=T_{u_i^\perp}(L)$ and $I(M)=\sum_{i=1}^n T_{u_i^\perp}(M)$, we obtain
\[
    I(K+L)^{\frac1{n+2}}\ge I(K)^{\frac1{n+2}}+I(L)^{\frac1{n+2}}.
\]
This proves the inequality.

For equality, equality must hold in Step~1, hence in the Brunn-Minkowski inequality in $\R^{n+2}$ for the associated lifted bodies. Therefore the relevant one-sided lifted bodies are homothetic, which forces the original convex bodies to be homothetic. Conversely, homothetic bodies give equality by homogeneity.
\end{proof}

We now return to the endpoint $q=n+2$. By \eqref{polar-dual-q},
\[
    \tV_{n+2}(K)=\frac{n+2}{n}\int_K |x|^2\,\dd x.
\]
If $K$ is origin-symmetric, then $g(K)=0$, so
\[
    I(K)=\int_K |x|^2\,\dd x=\frac{n}{n+2}\tV_{n+2}(K).
\]
Therefore Hadwiger's inequality immediately gives Theorem~\ref{thm-endpoint}.

\begin{proof}[Proof of Theorem~\ref{thm-endpoint}]
Let $K,L\subset\R^n$ be origin-symmetric convex bodies. Then $g(K)=g(L)=g(K+L)=0$, and hence
\[
    I(M)=\frac{n}{n+2}\tV_{n+2}(M),\qquad M=K,L,K+L.
\]
Applying Theorem~\ref{thm-Hadwiger}, we get
\[
    \tV_{n+2}(K+L)^{\frac1{n+2}}
    \ge \tV_{n+2}(K)^{\frac1{n+2}}+\tV_{n+2}(L)^{\frac1{n+2}}.
\]
If equality holds, then $K$ and $L$ are homothetic by Theorem~\ref{thm-Hadwiger}. Since both are origin-symmetric, the translation part vanishes, so they are dilates of each other.
\end{proof}

\section{The Reilly approach}
\label{sec-unconditional-prep}

Let $\alpha=q-n \in (-n,1]$. We denote
\[
  \tilde{\mu}_\alpha(K):=\int_K |x|^\alpha\,\dd x .
\]
Then $\tilde{\mu}_\alpha$ is $(n+\alpha)$-homogeneous and $\tV_q(K)=\frac qn\tilde{\mu}_\alpha(K)$. 
Hence Theorem~\ref{thm-unconditional} is equivalent to
\begin{equation}\label{tilde-mu-alpha-BM}
  \tilde{\mu}_\alpha((1-\lambda) K+\lambda L)
  \geq
  \tilde{\mu}_\alpha(K)^{1-\lambda} \tilde{\mu}_\alpha(L)^{\lambda},\quad -n<\alpha\le 1,
\end{equation}
for all $\lambda\in[0,1]$ and for all unconditional convex bodies $K,L\subset\bR^{n}$.

Define the potential and the weighted measure
\[
  W_\alpha(x)=-\alpha\log|x|,
  \qquad
  \dd\tilde{\mu}_\alpha(x)=e^{-W_\alpha}\dd x=|x|^\alpha\dd x .
\]
On $\R^n\setminus\{0\}$, the associated elliptic operator is
\[
  L_\alpha u
  =\Delta u-\langle\nabla W_\alpha,\nabla u\rangle
  =\Delta u+\alpha\,\frac{x\cdot\nabla u}{|x|^2}
  =|x|^{-\alpha}\divg\!\bigl(|x|^\alpha\nabla u\bigr).
\]

For a bounded Lipschitz domain $K\subset\R^n$ containing the origin, set
\[
  H^1(K,\tilde{\mu}_\alpha)
  =
  \bigl\{u\in L^2(K,\tilde{\mu}_\alpha):
  \nabla u\in L^2(K,\tilde{\mu}_\alpha)\bigr\},
\]
where derivatives are taken in the distributional sense. The weight
$|x|^\alpha$ belongs to the Muckenhoupt class $A_2(\R^n)$ for
$|\alpha|<n$. Thus the space $H^1(K,\tilde{\mu}_\alpha)$ is a Hilbert
space and supports the weighted Poincar\'e theory. Since the
singularity is away from $\partial K$, the ordinary trace theorem on a
collar neighbourhood of $\partial K$ gives the boundary trace estimates
used below. We refer to \cite{FKS82} and \cite{HKM06} for the general
theory of degenerate elliptic equations with $A_2$ weights.

\subsection{The singular Neumann problem}
\label{sec-singular-neumann}

\begin{lem}\label{lem-neumann}
  Let $-n<\alpha<n$, and let $K\in\mathcal{K}_{+}^n$. Let
  $\varphi\in C^\infty(\partial K)$ and set
  \[
    \dd\tilde{\mu}_\alpha=|x|^\alpha\,\dd x,
    \qquad
    \dd\tilde{\mu}_{\alpha,\partial K}
    =
    |x|^\alpha\,\dd H^{n-1},
  \]
  and
  \[
    C_\varphi
    =
    \frac{
      \int_{\partial K}\varphi\,\dd\tilde{\mu}_{\alpha,\partial K}}
    {      \tilde{\mu}_\alpha(K)}.
  \]
  Then the normalized weighted Neumann problem
  \[
    L_\alpha u=C_\varphi\quad\text{in }K,
    \qquad
    u_\nu=\varphi\quad\text{on }\partial K,
    \qquad
    \int_K u\,\dd\tilde{\mu}_\alpha=0
  \]
  has a unique weak solution
  $u\in H^1_\diamond(K,\tilde{\mu}_\alpha)$, where
  \[
    H^1_\diamond(K,\tilde{\mu}_\alpha)
    =
    \left\{
      v\in H^1(K,\tilde{\mu}_\alpha):
      \int_K v\,\dd\tilde{\mu}_\alpha=0
    \right\}.
  \]
  Equivalently, $u$ is the unique element of
  $H^1_\diamond(K,\tilde{\mu}_\alpha)$ satisfying
  \begin{equation}\label{eq-neumann-weak}
    \int_K\langle\nabla u,\nabla\eta\rangle\,
    \dd\tilde{\mu}_\alpha
    =
    -C_\varphi\int_K\eta\,\dd\tilde{\mu}_\alpha
    +
    \int_{\partial K}\varphi\eta\,
    \dd\tilde{\mu}_{\alpha,\partial K}
  \end{equation}
  for every $\eta\in H^1(K,\tilde{\mu}_\alpha)$.

    Moreover,
  \[
    u\in C^\infty(\operatorname{int}K\setminus\{0\}),
    \qquad
    L_\alpha u=C_\varphi\quad\text{in }K\setminus\{0\},
    \qquad
    u_\nu=\varphi\quad\text{on }\partial K,
  \]
  and, for every $0<\varepsilon<\operatorname{dist}(0,\partial K)$,
  \[
    u\in W^{2,2}\bigl(K\setminus\overline{B_\varepsilon}\bigr).
  \]
  If in addition $K$ and $\varphi$ are unconditional, then $u$ is
  unconditional.
\end{lem}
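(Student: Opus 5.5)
Existence and uniqueness come from Lax--Milgram on the closed subspace $H^1_\diamond(K,\tilde{\mu}_\alpha)$. Since $|x|^\alpha\in A_2(\R^n)$ for $|\alpha|<n$ and $K$ is a bounded Lipschitz (indeed $C^2$) domain, the weighted Poincar\'e inequality holds:
\[
\int_K v^2\,\dd\tilde{\mu}_\alpha\le C\int_K|\nabla v|^2\,\dd\tilde{\mu}_\alpha,\qquad v\in H^1_\diamond(K,\tilde{\mu}_\alpha).
\]
Hence the bilinear form $a(u,\eta)=\int_K\langle\nabla u,\nabla\eta\rangle\,\dd\tilde{\mu}_\alpha$ is coercive on $H^1_\diamond$.

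Next I would check that the right-hand side of \eqref{eq-neumann-weak}, $\ell(\eta)=-C_\varphi\int_K\eta\,\dd\tilde{\mu}_\alpha+\int_{\partial K}\varphi\eta\,\dd\tilde{\mu}_{\alpha,\partial K}$, is bounded. The first term is bounded because $\tilde{\mu}_\alpha(K)<\infty$ (as $\alpha>-n$) and by Cauchy--Schwarz. For the second term, fix a cutoff $\chi$ equal to $1$ near $\partial K$ and supported away from $B_{\varepsilon}$. On $\operatorname{supp}\chi$ the weight is bounded above and below, so the unweighted trace theorem gives $\|\eta\|_{L^2(\partial K)}\le C\|\chi\eta\|_{H^1}\le C\|\eta\|_{H^1(K,\tilde{\mu}_\alpha)}$.

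Lax--Milgram then gives a unique $u\in H^1_\diamond$ with $a(u,\eta)=\ell(\eta)$ for all $\eta\in H^1_\diamond$. The choice of $C_\varphi$ makes $\ell(1)=0$, and $a(u,1)=0$, so the identity extends to all $\eta\in H^1=H^1_\diamond\oplus\R$. Uniqueness in $H^1_\diamond$ follows from coercivity.

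For regularity, I would note that on $K\setminus\overline{B_{\varepsilon/2}}$ the operator $L_\alpha$ is uniformly elliptic with smooth coefficients. Interior elliptic regularity then gives $u\in C^\infty(\operatorname{int}K\setminus\{0\})$, and testing with $\eta\in C_c^\infty(\operatorname{int}K\setminus\{0\})$ shows $L_\alpha u=C_\varphi$ classically there. Near $\partial K$ I would localize with $\chi$. The function $\chi u$ solves an unweighted Neumann problem $\divg(|x|^\alpha\nabla(\chi u))=g$ with $g\in L^2$ and smooth conormal data, on a $C^2$ domain. Standard $W^{2,2}$ Neumann regularity (e.g.\ via difference quotients in tangential directions plus the equation for the normal second derivative) gives $u\in W^{2,2}(K\setminus\overline{B_\varepsilon})$. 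Integrating by parts against arbitrary $\eta$ then recovers $u_\nu=\varphi$ in the trace sense; if $\partial K$ is only $C^2$, this is as far as boundary regularity goes, which suffices.

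Unconditionality follows from uniqueness. For each coordinate reflection $\sigma$, both the weight and the domain are invariant, and $\varphi\circ\sigma=\varphi$, $\nu\circ\sigma=\sigma\nu$. Hence $u\circ\sigma\in H^1_\diamond$ satisfies the same weak formulation (change variables in \eqref{eq-neumann-weak} with test function $\eta\circ\sigma$), so $u\circ\sigma=u$.

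The main obstacle is the functional-analytic setup at the singular point: the weighted Poincar\'e inequality (compactness of $H^1(K,\tilde{\mu}_\alpha)\hookrightarrow L^2(K,\tilde{\mu}_\alpha)$ for $A_2$ weights), and the fact that smooth functions are dense so that the weak formulation is meaningful. For these I would cite \cite{FKS82,HKM06}, or argue directly: away from the origin compactness is classical, and near the origin $\tilde{\mu}_\alpha(B_r)\to0$ combined with a weighted Sobolev gain gives uniform smallness of the $L^2$ mass.
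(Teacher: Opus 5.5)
Your proposal is correct and follows essentially the same route as the paper. Both arguments run Lax--Milgram on $H^1_\diamond(K,\tilde{\mu}_\alpha)$ using the $A_2$ weighted Poincar\'e inequality and a collar trace estimate, extend to all test functions via $\ell(1)=0$, apply elliptic regularity away from the origin, and obtain unconditionality from uniqueness. Your cutoff localization $\chi u$ just spells out the $W^{2,2}$ Neumann step that the paper handles by citation.
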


\begin{proof}
  Write $w(x)=|x|^\alpha$, so that
  $\dd\tilde{\mu}_\alpha=w\,\dd x$. Since $0\in\operatorname{Int}K$,
  the weight is smooth and bounded above and below by positive
  constants in a collar neighbourhood of $\partial K$. Hence the
  ordinary trace theorem in this collar gives
  \begin{equation}\label{eq-trace-estimate-tilde}
    \|\eta\|_{L^2(\partial K,\tilde{\mu}_{\alpha,\partial K})}
    \le C\|\eta\|_{H^1(K,\tilde{\mu}_\alpha)}
    \qquad
    \text{for all }\eta\in H^1(K,\tilde{\mu}_\alpha).
  \end{equation}
  Therefore,
  \[
    F(\eta):=
    -C_\varphi\int_K\eta\,\dd\tilde{\mu}_\alpha
    +
    \int_{\partial K}\varphi\eta\,
    \dd\tilde{\mu}_{\alpha,\partial K}
  \]
  defines a bounded linear functional on
  $H^1(K,\tilde{\mu}_\alpha)$. It follows from the definition of $C_\varphi$ that $F(1)=0$.
  Since $|x|^\alpha\in A_2(\mathbb R^n)$ for $|\alpha|<n$, the weighted
  Poincar\'e inequality holds on the bounded connected Lipschitz domain
  $K$:
  \begin{equation}\label{eq-wpoincare-tilde}
    \int_K |v-v_{\tilde{\mu}_\alpha}|^2
    \,\dd\tilde{\mu}_\alpha
    \le
    C_{K,\alpha}
    \int_K |\nabla v|^2\,\dd\tilde{\mu}_\alpha ,
  \end{equation}
  where $ v_{\tilde{\mu}_\alpha}
    =
    \frac1{\tilde{\mu}_\alpha(K)}
    \int_K v\,\dd\tilde{\mu}_\alpha$.
  Thus, on $H^1_\diamond(K,\tilde{\mu}_\alpha)$, the bilinear form
  \[
    B(v,\eta)
    =\int_K\langle\nabla v,\nabla\eta\rangle
    \,\dd\tilde{\mu}_\alpha
  \]
  is bounded and coercive. The Lax-Milgram theorem gives a unique
  $u\in H^1_\diamond(K,\tilde{\mu}_\alpha)$ such that
  \[
    B(u,\eta)=F(\eta)
    \qquad
    \text{for all }\eta\in H^1_\diamond(K,\tilde{\mu}_\alpha).
  \]
    For any $\eta=\eta_0+\eta_{\tilde{\mu}_\alpha}\in H^1(K,\tilde{\mu}_\alpha)$, where $\eta_{\tilde{\mu}_\alpha}$ is constant and $\eta_0\in H^1_\diamond(K,\tilde{\mu}_\alpha)$, by $F(1)=0$, we get
  \[
    B(u,\eta)=B(u,\eta_0)=F(\eta_0)=F(\eta),
  \]
  which proves \eqref{eq-neumann-weak} for every
  $\eta\in H^1(K,\tilde{\mu}_\alpha)$.

  Taking test functions in $C_c^\infty(K\setminus\{0\})$ gives
  \[
    |x|^{-\alpha}\divg(|x|^\alpha\nabla u)=C_\varphi
  \]
  in the sense of distributions on $K\setminus\{0\}$, that is,
  $L_\alpha u=C_\varphi$. The boundary term in
  \eqref{eq-neumann-weak} gives the weak conormal condition
  $|x|^\alpha u_\nu=|x|^\alpha\varphi$ on $\partial K$. Since
  $|x|^\alpha$ is smooth and strictly positive near $\partial K$, this
  is the usual Neumann condition $u_\nu=\varphi$.
  On every compact subset of $\operatorname{int}K\setminus\{0\}$, the operator
  $L_\alpha$ is uniformly elliptic with smooth coefficients. Standard
  interior regularity implies \(u\in C^\infty(\operatorname{int}K\setminus\{0\})\),
  and the equation and boundary condition hold classically away from
  the origin. The claimed $W^{2,2}$ regularity up to the boundary
  follows from standard Neumann estimates on the $C^2$ domain
  $K\setminus\overline{B_\varepsilon}$; see, e.g., \cite{Lie13}.
  Uniqueness follows by testing the difference of two solutions.

  If $K$ and $\varphi$ are unconditional, then for every coordinate
  reflection $\sigma$ the function $u_\sigma(x)=u(\sigma x)$ has the
  same weighted mean, satisfies the same weak problem, and belongs to
  $H^1_\diamond(K,\tilde{\mu}_\alpha)$. By uniqueness,
  $u_\sigma=u$. Thus $u$ is unconditional.
\end{proof}

\begin{rmk}
  For $K\in\mathcal{K}_{+}^n$, the conclusion of Lemma~\ref{lem-neumann} remains true by first smoothing $\varphi$ in the $C^1$ topology and then passing to the limit. 
  Indeed, the trace estimate \eqref{eq-trace-estimate-tilde},
  the weighted Poincar\'e inequality \eqref{eq-wpoincare-tilde}, and
  the Lax-Milgram bound are all stable under uniform $C^1$
  approximation of $\varphi$; the resulting solutions converge in
  $H^1_\diamond(K,\tilde{\mu}_\alpha)$ to the desired weak solution.
\end{rmk}

Because the density $|x|^\alpha$ may be singular at the origin,
the solution $u$ is not \emph{a priori} smooth there in the negative
parameter case. The following expansion provides the precise asymptotic
behaviour needed to justify the singular Reilly formula. Let
$q=n+\alpha>0$; note that $q\in(0,n+1]$ in the full range.

\begin{lem}\label{lem-expansion} 
  Let $n\ge2$, $-n<\alpha<n$, and let $u$ be the unconditional Neumann
  solution from Lemma~\ref{lem-neumann} on a ball $B_R\subset K$.
  Set $q=n+\alpha$. Then there exist coefficients $a_{m,\ell}$ such that for every
  $R_0<R$,
  \begin{equation}\label{eq-expansion}
    u(r,\theta)
    =a_0+\frac{C_\varphi}{2q}r^2
     +\sum_{\substack{m\ge2\\m\ \text{even}}}
       \sum_{\ell=1}^{d_m^{\rm unc}} a_{m,\ell}\,
       r^{\beta_m}Y_{m,\ell}^{\rm unc}(\theta),
  \end{equation}
  where $\{Y_{m,\ell}^{\rm unc}\}$ is an orthonormal basis of the
  unconditional spherical harmonics of degree $m$, and
  \begin{equation}\label{eq-beta-m}
    \beta_m
    =\frac{-(q-2)+\sqrt{(q-2)^2+4m(m+n-2)}}{2}.
  \end{equation}
  The series converges in $H^1(B_{R_0},\mu_\alpha)$ and, together with
  all its derivatives, locally uniformly on $B_{R_0}\setminus\{0\}$.
  Consequently, as $r\downarrow0$,
  \begin{equation}\label{eq-asymptotics}
    u=a_0+O(r^{\beta_2}+r^2),\qquad
    |\nabla u|=O(r^{\beta_2-1}+r),\qquad
    |D^2u|=O(r^{\beta_2-2}+1),
  \end{equation}
  where $\beta_2$ is the exponent in \eqref{eq-beta-m} for $m=2$.
  Moreover,
  \begin{equation}\label{eq-hessian-l2}
    D^2u\in L^2(B_{R/2},\mu_\alpha).
  \end{equation}
\end{lem}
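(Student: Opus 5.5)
The plan is separation of variables for $L_\alpha$ on $B_R$ in polar coordinates. Since $\nabla W_\alpha=-\alpha x/|x|^2$, we have
\[
L_\alpha=\partial_r^2+\frac{n-1+\alpha}{r}\partial_r+\frac{1}{r^2}\Delta_{\bS^{n-1}}
=\partial_r^2+\frac{q-1}{r}\partial_r+\frac{1}{r^2}\Delta_{\bS^{n-1}}.
\]
First I subtract the explicit particular solution $\frac{C_\varphi}{2q}r^2$. This works because $L_\alpha r^2=2+2(q-1)=2q$. The remainder $v=u-\frac{C_\varphi}{2q}r^2$ then satisfies $L_\alpha v=0$ on $B_R\setminus\{0\}$, is smooth there by Lemma~\ref{lem-neumann}, and lies in $H^1(B_R,\tilde\mu_\alpha)$.

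Next I expand $v(r,\cdot)$ in spherical harmonics for each $0<r<R$. Because $u$ is unconditional (Lemma~\ref{lem-neumann}), $v$ is invariant under coordinate reflections. Projection onto harmonics commutes with these reflections, so only unconditional harmonics $Y^{\rm unc}_{m,\ell}$ occur. Each such harmonic is even in every variable, which forces $m$ to be even. Writing $v=\sum v_{m,\ell}(r)Y_{m,\ell}^{\rm unc}$, the smoothness of $v$ away from $0$ lets us differentiate under the projection. Each coefficient then solves the Euler ODE
\[
v''+\frac{q-1}{r}v'-\frac{m(m+n-2)}{r^2}v=0,
\]
whose indicial roots are $\beta_m$ from \eqref{eq-beta-m} and $\beta_m^-=-(q-2)-\beta_m$. (For $m=0$ with $q=2$ the roots coincide and one gets $\log r$ instead.)

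The key step, and the main obstacle, is excluding the singular branch $r^{\beta_m^-}$ (or $\log r$ when $m=0$) using only $v\in H^1(B_R,\tilde\mu_\alpha)$. By orthonormality,
\[
\int_{B_R}|\nabla v|^2\,d\tilde\mu_\alpha\ \ge\ \int_0^R\big(|v_{m,\ell}'|^2+m(m+n-2)r^{-2}|v_{m,\ell}|^2\big)r^{q-1}\,dr.
\]
A term $r^{\beta^-}$ contributes $\int_0 r^{2\beta^--2+q-1}dr$, which is finite only if $2\beta^-+q-2>0$. But $2\beta_m^-+q-2=-(q-2)-2\beta_m<0$ whenever $\beta_m\geq 0$ and $q>2$. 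In general $2\beta_m+q-2=\sqrt{(q-2)^2+4m(m+n-2)}\ge0$, which covers all $q$. So the gradient integral diverges unless the coefficient vanishes. For $m=0$ with $q\le 2$ the second solution is $r^{2-q}$ or $\log r$. Here I need more care: such a term has $|\nabla v|^2 r^{q-1}\sim r^{1-q}$, which is integrable. To exclude it, I would use the weak formulation \eqref{eq-neumann-weak} with test functions supported near the origin. That formulation forbids a flux $\lim_{r\to0}r^{q-1}\partial_r v_{0}\neq0$, i.e.\ a point source at $0$. Concretely, testing with a radial cutoff $\eta_\varepsilon$ equal to $1$ on $B_\varepsilon$ and letting $\varepsilon\to0$ forces the flux to vanish.

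With only regular branches left, we have $v_{m,\ell}(r)=a_{m,\ell}r^{\beta_m}$, and $a_{m,\ell}=v_{m,\ell}(R_1)R_1^{-\beta_m}$ for any $R_1\in(R_0,R)$. Convergence then follows from $|v_{m,\ell}(R_1)|\le C\|v(R_1,\cdot)\|_{L^2}$ together with the geometric decay $(r/R_1)^{\beta_m}$ on $r\le R_0$. Since $\beta_m$ grows linearly in $m$ and derivatives of $Y_m$ grow polynomially, this gives locally uniform convergence with all derivatives on $B_{R_0}\setminus\{0\}$. Summing the weighted energies gives convergence in $H^1(B_{R_0},\tilde\mu_\alpha)$. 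The asymptotics \eqref{eq-asymptotics} follow because $\beta_m$ is increasing in $m$: the tail is dominated by the $m=2$ term, with the same geometric-series bound.

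Finally, \eqref{eq-hessian-l2} follows from $|D^2u|^2=O(r^{2\beta_2-4}+1)$. The weighted integral $\int_0 r^{2\beta_2-4+q-1}dr$ is finite iff $2\beta_2+q-4>0$. Using $2\beta_2+q-2=\sqrt{(q-2)^2+8n}$, this becomes $\sqrt{(q-2)^2+8n}>2$, which holds since $n\ge1$.
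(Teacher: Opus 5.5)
Your proposal is correct and follows essentially the same route as the paper's proof. You subtract $\frac{C_\varphi}{2q}r^2$, project onto unconditional (hence even-degree) spherical harmonics, kill the singular Euler branch by weighted energy for $m\ge1$ and by the zero-flux (no Dirac mass) argument for the radial mode, and conclude the asymptotics and $D^2u\in L^2$ from $2\beta_2+q-4=\sqrt{(q-2)^2+8n}-2>0$. The differences are cosmetic: you get convergence via an intermediate radius $R_1$ (geometric decay $(R_0/R_1)^{\beta_m}$) rather than rapid decay of the coefficients of the smooth trace, and your identity $2\beta_m^-+q-2=-\sqrt{(q-2)^2+4m(m+n-2)}$ shows divergence for $m\ge1$ and all $q>0$ more cleanly than the paper's bound.
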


\begin{proof}
  Let $v=u-\frac{C_\varphi}{2q}|x|^2$. Since $L_\alpha(|x|^2)=2q$, the function $v$ is unconditional,
  belongs to $H^1(B_R,\tilde{\mu}_\alpha)$, and satisfies $L_\alpha v=0$
  weakly in $B_R$.  Standard interior elliptic regularity
  \cite[Chapters~6,8]{GT01} gives $v\in C^\infty(B_R\setminus\{0\})$.

  In polar coordinates,
  \[
    L_\alpha
    =\partial_{rr}+\frac{q-1}{r}\partial_r
     +\frac1{r^2}\Delta_{\mathbb{S}^{n-1}} .
  \]
  Let $\cH_m$ denote the space of spherical harmonics of degree $m$
  on $\mathbb{S}^{n-1}$, with $-\Delta_{\mathbb{S}^{n-1}}Y_m=m(m+n-2)Y_m$.  The
  orthogonal decomposition $L^2(\mathbb{S}^{n-1})=\bigoplus_{m=0}^\infty\cH_m$
  is standard; see \cite[Appendix]{Sch14} or
  \cite[Section~3]{Gro96}.  Let $v_m(r)\in\cH_m$ be the projection
  of $v(r,\cdot)$.  Projecting the equation $L_\alpha v=0$ onto
  $\cH_m$ yields the ODE
  \begin{equation}\label{eq-ode}
    v_m''+\frac{q-1}{r}v_m'
    -\frac{m(m+n-2)}{r^2}v_m=0,\qquad 0<r<R .
  \end{equation}

  Looking for solutions of the form $r^\gamma$ gives the indicial
  equation $\gamma(\gamma+q-2)=m(m+n-2)$, whose roots are
  precisely $\beta_m^+=\beta_m$ (as in \eqref{eq-beta-m}) and
  $\beta_m^-=-(q-2)-\beta_m$.  Hence
  \[
    v_m(r)=r^{\beta_m}A_m+r^{\beta_m^-}B_m,
  \]
  with the usual modification when $m=0$ (the two roots are $0$ and
  $2-q$, giving $v_0(r)=A_0+B_0 r^{2-q}$, with the second solution
  $\log r$ if $q=2$).

  \emph{Exclusion of the singular branch.}
  For $m\ge1$ and a non‑zero $Y_m\in\cH_m$,
  \[
    \int_{B_\varepsilon}|\nabla(r^\gamma Y_m)|^2\,\dd\tilde{\mu}_\alpha
    \asymp\int_0^\varepsilon r^{2\gamma+q-3}\,\dd r ,
  \]
  where $A\asymp B$ if $cB\le A\le CB$ for some universal constants $c,C>0$. Substituting $\gamma=\beta_m^-$ and using
  $\beta_m^-=-(q-2)-\beta_m$ gives the exponent
  $-(q-1)-2\beta_m\le-(q-1)<-1$ (since $q>0$).  Hence the integral diverges as $\varepsilon\to0$,
  contradicting $v\in H^1(B_R,\tilde{\mu}_\alpha)$.  Thus $B_m=0$ for all $m\ge1$.
  For $m=0$, the singular solution $r^{2-q}$ (or $\log r$ when $q=2$) may have finite weighted
  $H^1$ energy when $0<q<2$.  It is nevertheless excluded because it is not a weak
  solution across the origin: indeed,
  \[
    r^{q-1}\partial_r(r^{2-q})=2-q,\qquad
    r^{q-1}\partial_r(\log r)=1\quad(q=2),
  \]
  so a nonzero coefficient produces a nonzero limiting flux through small spheres,
  i.e.\ a Dirac mass at the origin in
  $\operatorname{div}(|x|^\alpha\nabla u)$.  The weak Neumann equation has only the
  prescribed constant right-hand side $C_\varphi$ and no Dirac mass.
  Therefore the coefficient of this mode is zero.

  Therefore $v_m(r)=r^{\beta_m}g_m$ for every $m$, where
  $g_m=v_m(R_0,\cdot)$ for some fixed $0<R_0<R$ and we set
  $\beta_0=0$ (the constant solution).

  \emph{Unconditional symmetry.}
  Because $v$ is unconditional, each $g_m$ belongs to the subspace
  $\cH_m^{\rm unc}$ of spherical harmonics invariant under all
  coordinate reflections.  A homogeneous harmonic polynomial in
  $\cH_m^{\rm unc}$ contains only monomials with even exponent in
  each variable; therefore $m$ must be even.  Thus
  $\cH_m^{\rm unc}=\{0\}$ for odd $m$, and only even $m$ appear in
  the expansion.

  \emph{Convergence and asymptotics.}
  The trace $v(R_0,\cdot)$ is smooth on $\mathbb{S}^{n-1}$.  Standard
  elliptic estimates on the sphere \cite[Section~3]{Gro96},
  \cite[Chapter~III]{Cha84} imply that its spherical harmonic
  coefficients decay faster than any negative power of~$m$:
  $\|g_m\|_{C^k(\mathbb{S}^{n-1})}\le C_{k,N}\,(1+m)^{-N}$ for every
  $k,N\ge0$.  Since $\beta_m\asymp m$, the exponential factor
  $(r/R_0)^{\beta_m}\le (r/R_0)^{\beta_2}\,2^{-(\beta_m-\beta_2)}$
  (for $r\le R_0/2$) dominates all polynomial factors.  The series
  therefore converges in $H^1(B_{R_0},\tilde{\mu}_\alpha)$ and may be
  differentiated term by term on compact subsets of
  $B_{R_0}\setminus\{0\}$.  This yields the asymptotics
  \eqref{eq-asymptotics}.

  Finally, $|D^2u|^2=O(r^{2\beta_2-4}+1)$.  The radial integral
  $\int_0^\delta r^{2\beta_2+q-5}\,\dd r$ converges because
  \[
    2\beta_2+q-4
    =\sqrt{(q-2)^2+8n}-2>0.
  \]
  Hence $D^2u\in L^2(B_{R/2},\tilde{\mu}_\alpha)$, proving
  \eqref{eq-hessian-l2}.
\end{proof}

\subsection{The singular weighted Reilly formula}\
\label{sec-singular-reilly}

We recall the generalized weighted Reilly formula from
Kolesnikov-Milman~\cite[Theorem~1.1]{KM18}. For a smooth bounded
domain $\Omega$, a smooth potential $W$, and
$u\in C^\infty(\overline\Omega)$,
\begin{equation}\label{eq-reilly-smooth}
  \begin{aligned}
    \int_\Omega(L_W u)^2 e^{-W}\dd x
    =&\int_\Omega\bigl(\|D^2u\|_{\HS}^2
       +D^2W(\nabla u,\nabla u)\bigr)e^{-W}\dd x
       +\int_{\partial\Omega}H_W u_\nu^2 e^{-W}\dd H^{n-1}\\
    &+\int_{\partial\Omega}\mathrm{II}(\nabla_{\partial\Omega}u,
       \nabla_{\partial\Omega}u)\,e^{-W}\dd H^{n-1}
       -2\int_{\partial\Omega}
       \langle\nabla_{\partial\Omega}u_\nu,
       \nabla_{\partial\Omega}u\rangle\,e^{-W}\dd H^{n-1},
  \end{aligned}
\end{equation}
where $H_W=H-\langle\nabla W,\nu\rangle$ and $\mathrm{II}(X,Y)=\langle D_X\nu,Y\rangle$.

Our potential $W_\alpha=-\alpha\log|x|$ is singular at the origin. We
apply \eqref{eq-reilly-smooth} on
$K_\varepsilon=K\setminus\overline B_\varepsilon$ and let
$\varepsilon\downarrow0$.

\begin{lem}
  \label{lem-inner-boundary}
  Let $K$ and $u$ be as in Lemma~\ref{lem-neumann}. For
  $\varepsilon>0$ small, let $\Sigma_\varepsilon=\partial B_\varepsilon$
  be the inner boundary of $K_\varepsilon$, with outward unit normal
  $\nu_\varepsilon=-e_r$ relative to $K_\varepsilon$. Then
  \begin{equation}\label{eq-inner-vanishing}
    \begin{aligned}
      \mathcal B_\varepsilon(u)
      :=&\int_{\Sigma_\varepsilon}
         H_{W_\alpha,\varepsilon}u_{\nu_\varepsilon}^2
         \,\dd\tilde{\mu}_{\alpha,\Sigma_\varepsilon}
        +\int_{\Sigma_\varepsilon}
         \mathrm{II}_\varepsilon(\nabla_{\Sigma_\varepsilon}u,
         \nabla_{\Sigma_\varepsilon}u)
         \,\dd\tilde{\mu}_{\alpha,\Sigma_\varepsilon}\\
        &-2\int_{\Sigma_\varepsilon}
         \langle\nabla_{\Sigma_\varepsilon}u_{\nu_\varepsilon},
         \nabla_{\Sigma_\varepsilon}u\rangle
         \,\dd\tilde{\mu}_{\alpha,\Sigma_\varepsilon}
    \end{aligned}
  \end{equation}
  tends to $0$ as $\varepsilon\downarrow0$.
\end{lem}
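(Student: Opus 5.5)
The plan is to compute each of the three boundary integrals on the small sphere $\Sigma_\varepsilon$ explicitly, and bound it using the asymptotics \eqref{eq-asymptotics} from Lemma~\ref{lem-expansion}.

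\textbf{Geometric quantities on $\Sigma_\varepsilon$.} With respect to $K_\varepsilon$, the outward normal is $\nu_\varepsilon=-e_r$. Hence $\Sigma_\varepsilon$ is concave as seen from $K_\varepsilon$, and
\[
\mathrm{II}_\varepsilon=-\varepsilon^{-1}g_{\Sigma_\varepsilon},\qquad H=-(n-1)/\varepsilon .
\]
For the potential, $\nabla W_\alpha=-\alpha x/|x|^2$, so $\langle\nabla W_\alpha,\nu_\varepsilon\rangle=\alpha/\varepsilon$. This gives
\[
H_{W_\alpha,\varepsilon}=-(n-1+\alpha)/\varepsilon=-(q-1)/\varepsilon .
\]
The weighted area of $\Sigma_\varepsilon$ is
\[
\tilde\mu_{\alpha,\Sigma_\varepsilon}(\Sigma_\varepsilon)=|\bS^{n-1}|\,\varepsilon^{n-1+\alpha}=c\,\varepsilon^{q-1}.
\]
Consequently every term of $\mathcal B_\varepsilon(u)$ is bounded by
\[
C\varepsilon^{q-2}\sup_{\Sigma_\varepsilon}|\nabla u|^2+C\varepsilon^{q-1}\sup_{\Sigma_\varepsilon}|\nabla_{\Sigma}u_{\nu_\varepsilon}|\,|\nabla u| .
\]

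\textbf{Bounding the terms.} By \eqref{eq-asymptotics}, $|\nabla u|=O(\varepsilon^{\beta_2-1}+\varepsilon)$ and $|D^2u|=O(\varepsilon^{\beta_2-2}+1)$. The tangential gradient of $u_{\nu_\varepsilon}=-\partial_r u$ along the sphere is bounded by $|D^2u|+\varepsilon^{-1}|\nabla u|$, since differentiating $\partial_r u=\langle\nabla u,x/|x|\rangle$ tangentially produces a $D^2u$ term and a $|\nabla u|/\varepsilon$ term. The first two terms are then
\[
O\bigl(\varepsilon^{q-2}(\varepsilon^{2\beta_2-2}+\varepsilon^2)\bigr)=O\bigl(\varepsilon^{2\beta_2+q-4}+\varepsilon^{q}\bigr).
\]
The third term is likewise $O\bigl(\varepsilon^{q-1}(\varepsilon^{\beta_2-2}+\varepsilon^{-1}\varepsilon)(\varepsilon^{\beta_2-1}+\varepsilon)\bigr)$, which is of the same order. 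The exponent $2\beta_2+q-4=\sqrt{(q-2)^2+8n}-2$ is positive, as already noted at the end of the proof of Lemma~\ref{lem-expansion}, and $q>0$. Expanding the products, the cross terms have exponents
\[
q-2+\beta_2=\tfrac12\bigl((2\beta_2+q-4)+q\bigr)>0,
\]
so every term tends to zero.

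\textbf{Sharper alternative.} One can do better by using the explicit leading term. The $m=2$ mode gives $u\approx a_0+r^{\beta_2}Y$ with $Y$ a degree-two unconditional harmonic, and the quadratic term $\frac{C_\varphi}{2q}r^2$ is radial. Plugging these in, each boundary integral is a homogeneous power $\varepsilon^{2\beta_2+q-4}$ times a spherical integral, plus higher-order terms. So the same positive exponent governs the decay. The crude sup bounds above already suffice, and this refinement is not needed.

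\textbf{Expected obstacle.} The main point requiring care is the justification that \eqref{eq-asymptotics} holds uniformly on spheres, including for tangential derivatives of $\partial_r u$. This follows from the locally uniform convergence of the differentiated series on $B_{R_0}\setminus\{0\}$ together with the exponential decay of the higher modes established in Lemma~\ref{lem-expansion}. Once that is granted, the lemma reduces to checking the sign of exponents, and in particular to $2\beta_2+q-4>0$. Note that unconditionality is exactly what removes the $m=1$ mode. Without it, $\beta_1$ would enter, and for small $q$ the exponent $2\beta_1+q-4$ could fail to be positive.
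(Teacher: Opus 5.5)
Your proof is correct and is essentially the paper's argument. You use the same explicit values $\mathrm{II}_\varepsilon=-\varepsilon^{-1}g_{\Sigma_\varepsilon}$, $H_{W_\alpha,\varepsilon}=-(q-1)/\varepsilon$ and weighted area $\sim\varepsilon^{q-1}$, and combine them with \eqref{eq-asymptotics} to get the exponents $2\beta_2+q-4$, $\beta_2+q-2$, $q$. You are slightly more careful than the paper: you derive the bound on $\nabla_{\Sigma_\varepsilon}u_{\nu_\varepsilon}$ from $|D^2u|+\varepsilon^{-1}|\nabla u|$, and you check the cross exponent explicitly.

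One correction to your closing aside, which does not affect the proof. In fact $2\beta_1+q-4=\sqrt{(q-2)^2+4(n-1)}-2\ge 2\sqrt{n-1}-2\ge 0$, with equality only at $n=q=2$. So without unconditionality the $m=1$ mode would not break the crude bound for small $q$.
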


\begin{proof}
  On $\Sigma_\varepsilon$ we have $\nu_\varepsilon=-e_r$.  For
  $X,Y\in T\Sigma_\varepsilon$,
  $D_X\nu_\varepsilon=-\frac1\varepsilon X$, whence
  $\mathrm{II}_\varepsilon(X,Y)=-\frac1\varepsilon\langle X,Y\rangle$
  and $H_\varepsilon=-\frac{n-1}{\varepsilon}$.  Moreover
  $\nabla W_\alpha=-\frac\alpha\varepsilon e_r$ on
  $\Sigma_\varepsilon$, so
  $\langle\nabla W_\alpha,\nu_\varepsilon\rangle
   =\frac\alpha\varepsilon$ and
  \[
    H_{W_\alpha,\varepsilon}
    =H_\varepsilon-\langle\nabla W_\alpha,\nu_\varepsilon\rangle
    =-\frac{q-1}{\varepsilon}
  \]
  since $q=n+\alpha$.
  The weighted surface measure is
  $\dd\tilde{\mu}_{\alpha,\Sigma_\varepsilon}
   =\varepsilon^{q-1}\dd\theta$.

  The asymptotic estimates \eqref{eq-asymptotics} give, for
  $x\in\Sigma_\varepsilon$,
  \[
    |u_{\nu_\varepsilon}|+|\nabla_{\Sigma_\varepsilon}u|
    \le C(\varepsilon^{\beta_2-1}+\varepsilon),\qquad
    |\nabla_{\Sigma_\varepsilon}u_{\nu_\varepsilon}|
    \le C(\varepsilon^{\beta_2-2}+1).
  \]
  Consequently,
  \[
    \begin{aligned}
      \bigl|\mathcal B_\varepsilon(u)\bigr|
      &\le C\varepsilon^{-1}
         \bigl(\varepsilon^{\beta_2-1}+\varepsilon\bigr)^2
         \varepsilon^{q-1}
        +C\bigl(\varepsilon^{\beta_2-2}+1\bigr)
          \bigl(\varepsilon^{\beta_2-1}+\varepsilon\bigr)
          \varepsilon^{q-1}\\[2pt]
      &\le C\bigl(
         \varepsilon^{2\beta_2+q-4}
        +\varepsilon^{\beta_2+q-2}
        +\varepsilon^{q}\bigr).
    \end{aligned}
  \]
  The dominant exponent is $2\beta_2+q-4$, which is strictly
  positive because
  $2\beta_2+q-4=\sqrt{(q-2)^2+8n}-2>0$ for all
  $n\ge2$ and $q>0$.  Hence $\mathcal B_\varepsilon(u)\to0$.
\end{proof}

We also need the Hessian of the singular potential:
\[
  D^2W_\alpha(x)
  =
  -\frac\alpha{|x|^2}\bigl(\Id-2e_r\otimes e_r\bigr),
  \qquad
  e_r=\frac x{|x|}.
\]
For $\nabla_Tu=\nabla u-(\partial_ru)e_r$, this gives
\begin{equation}\label{eq-hessian-split}
  D^2W_\alpha(\nabla u,\nabla u)
  =
  -\alpha\frac{|\nabla_Tu|^2}{|x|^2}
  +
  \alpha\frac{|\partial_ru|^2}{|x|^2}.
\end{equation}

\begin{prop}[Singular weighted Reilly formula]
  \label{prop-singular-reilly}
  Under the assumptions of Lemma~\ref{lem-neumann},
  \begin{equation}\label{eq-reilly-final}
    \begin{aligned}
      C_\varphi^2\tilde{\mu}_\alpha(K)
      =&\int_K\bigl(\|D^2u\|_{\HS}^2
         +D^2W_\alpha(\nabla u,\nabla u)\bigr)
         \dd\tilde{\mu}_\alpha
       +\int_{\partial K}H_{W_\alpha}u_\nu^2
         \,\dd\tilde{\mu}_{\alpha,\partial K}\\
       &+\int_{\partial K}\mathrm{II}(\nabla_{\partial K}u,
         \nabla_{\partial K}u)
         \,\dd\tilde{\mu}_{\alpha,\partial K}
       -2\int_{\partial K}
         \langle\nabla_{\partial K}u_\nu,
         \nabla_{\partial K}u\rangle
         \,\dd\tilde{\mu}_{\alpha,\partial K}.
    \end{aligned}
  \end{equation}
\end{prop}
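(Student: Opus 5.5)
The plan is to apply the smooth Kolesnikov--Milman formula \eqref{eq-reilly-smooth} on the punctured domain $K_\varepsilon=K\setminus\overline{B_\varepsilon}$ and then let $\varepsilon\downarrow0$. On $K_\varepsilon$ the potential $W_\alpha$ is smooth. By Lemma~\ref{lem-neumann}, $u\in C^\infty(\operatorname{int}K\setminus\{0\})\cap W^{2,2}(K_\varepsilon)$ and $L_\alpha u=C_\varphi$ classically on $K_\varepsilon$. The smooth formula is stated for $u\in C^\infty(\overline\Omega)$. Near the inner sphere $\Sigma_\varepsilon$ the function $u$ is smooth, so no difficulty arises there. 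At the outer boundary $\partial K$ I would first prove the identity for $u$ replaced by smooth approximants $u_k\to u$ in $W^{2,2}(K_\varepsilon)$. For instance, take the solutions with $C^\infty$ smoothed data on smoothed domains, or use density of $C^\infty(\overline{K_\varepsilon})$ in $W^{2,2}$. Then pass to the limit: every term is continuous in the $W^{2,2}(K_\varepsilon)$ norm, with the boundary terms controlled by the trace theorem ($W^{2,2}\to H^{1/2}$ for $\nabla u$). The term $\langle\nabla_{\partial K}u_\nu,\nabla_{\partial K}u\rangle$ is handled by the $H^{1/2}$–$H^{-1/2}$ duality after integrating by parts on $\partial K$, or more simply by invoking $u_\nu=\varphi$ smooth.

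This gives, for each small $\varepsilon$,
\[
C_\varphi^2\tilde\mu_\alpha(K_\varepsilon)=\int_{K_\varepsilon}\bigl(\|D^2u\|_{\HS}^2+D^2W_\alpha(\nabla u,\nabla u)\bigr)\dd\tilde\mu_\alpha+\mathcal B_{\partial K}(u)+\mathcal B_\varepsilon(u),
\]
where $\mathcal B_{\partial K}(u)$ denotes the three boundary integrals over $\partial K$ appearing in \eqref{eq-reilly-final}. Since $\partial K_\varepsilon=\partial K\cup\Sigma_\varepsilon$, the boundary contribution splits exactly into these outer terms, which are independent of $\varepsilon$, plus the inner term $\mathcal B_\varepsilon(u)$ of \eqref{eq-inner-vanishing}. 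Lemma~\ref{lem-inner-boundary} gives $\mathcal B_\varepsilon(u)\to0$. On the left, $\tilde\mu_\alpha(K_\varepsilon)\to\tilde\mu_\alpha(K)$ because $|x|^\alpha$ is locally integrable for $\alpha>-n$.

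The remaining step, which I expect to be the main point, is the convergence of the interior integral. We need $\|D^2u\|_{\HS}^2$ and $D^2W_\alpha(\nabla u,\nabla u)$ to be integrable against $\tilde\mu_\alpha$ near the origin, so that dominated (or monotone) convergence applies. The Hessian term is handled by \eqref{eq-hessian-l2}. For the potential term, \eqref{eq-hessian-split} shows $|D^2W_\alpha(\nabla u,\nabla u)|\le|\alpha|\,|\nabla u|^2/|x|^2$. By \eqref{eq-asymptotics} this is $O(r^{2\beta_2-4}+1)$, and it is integrated against $r^{q-1}\dd r\,\dd\theta$. This gives the radial integrand $r^{2\beta_2+q-5}+r^{q-1}$, which is integrable since $2\beta_2+q-4=\sqrt{(q-2)^2+8n}-2>0$ and $q>0$. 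Both interior integrands are therefore in $L^1(K,\tilde\mu_\alpha)$, and letting $\varepsilon\downarrow0$ yields \eqref{eq-reilly-final}. Away from the origin, the global integrability on $K\setminus B_{R/2}$ follows from the $W^{2,2}$ regularity in Lemma~\ref{lem-neumann}, because there the weight is bounded.
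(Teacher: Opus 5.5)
Your proposal is correct and follows essentially the same route as the paper: apply the Kolesnikov--Milman formula on $K_\varepsilon=K\setminus\overline{B_\varepsilon}$, approximating in $W^{2,2}(K_\varepsilon)$ to handle the merely $C^2$ outer boundary. You then discard the inner boundary term via Lemma~\ref{lem-inner-boundary}, and pass to the limit in the interior integral using the integrability near the origin supplied by Lemma~\ref{lem-expansion}. Your explicit check that the radial integrand for $|D^2W_\alpha(\nabla u,\nabla u)|\,|x|^\alpha$ is $O(r^{2\beta_2+q-5}+r^{q-1})$ just spells out what the paper asserts in one line. As in the paper, this tacitly relies on the unconditional expansion of Lemma~\ref{lem-expansion}, so unconditionality of $K$ and $\varphi$ is being assumed.
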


\begin{proof}
  If $K$ is $C^\infty$, apply \eqref{eq-reilly-smooth} directly to
  $K_\varepsilon=K\setminus\overline{B_\varepsilon}$ with $W=W_\alpha$.
  For $K\in\mathcal{K}_+^n$ (i.e.\ merely $C^2$), the formula follows by
  first smoothing $K$ and approximating $u$ in
  $W^{2,2}(K\setminus\overline{B_\varepsilon})$, then passing to the
  limit; all boundary traces below are well-defined by the
  $W^{1,2}$ trace theorem on the $C^2$ boundary. Apply \eqref{eq-reilly-smooth} to $K_\varepsilon$ with
  $W=W_\alpha$. The left-hand side equals \(C_\varphi^2\tilde{\mu}_\alpha(K_\varepsilon)
    \to
    C_\varphi^2\tilde{\mu}_\alpha(K).\)
  The inner boundary contribution $\mathcal B_\varepsilon(u)$ tends to zero by Lemma~\ref{lem-inner-boundary}. The outer boundary terms are
  independent of $\varepsilon$. By Lemma~\ref{lem-expansion}, $(\|D^2u\|^2+|D^2W_\alpha(\nabla u,\nabla u)|)|x|^\alpha$ is integrable near the origin, so the integral over
  $B_\varepsilon$ vanishes as $\varepsilon\downarrow0$. Passing to the
  limit yields \eqref{eq-reilly-final}.
\end{proof}

\section{Hardy estimates and proof of Theorem~\ref{thm-unconditional}}
\label{sec-unconditional-proof}

In this section we prove the Hardy estimates needed to control the
unfavorable terms in the singular Reilly formula, and then
derive Theorem~\ref{thm-unconditional}. 

\subsection{Hardy-type estimates on unconditional convex bodies}\
\label{sec-hardy-slices}

The following inequality is the analytic core of the coordinate‑slice
argument for positive $\alpha$. The offset parameter $c$ is essential for obtaining the
optimal constant.

\begin{lem}\label{lem-hardy-offset} 
  Let $0<\alpha<3$, $c>0$, $R>0$, and $f\in C^1([0,R])$ with
  $f(0)=0$.  Then
  \begin{equation}\label{eq-hardy-offset}
    \int_0^R|f'(t)|^2(t^2+c^2)^{\alpha/2}\,\dd t
    \ge(3-\alpha)c^2
    \int_0^R f(t)^2(t^2+c^2)^{(\alpha-4)/2}\,\dd t.
  \end{equation}
\end{lem}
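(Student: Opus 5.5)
The plan is a ground-state (Picone-type) substitution: choose an explicit positive function $\phi$ on $(0,R]$ with $\phi(0)=0$, set $g=\phi'/\phi$, and expand the square $w(f'-gf)^2\ge 0$, where $w(t)=(t^2+c^2)^{\alpha/2}$. Write $s=t^2+c^2$. Expanding, and integrating by parts with $2ff'=(f^2)'$ on $[\delta,R]$, gives
\[
\int_\delta^R w f'^2\,\dd t\ \ge\ \bigl[wgf^2\bigr]_\delta^R+\int_\delta^R\bigl(-(wg)'-wg^2\bigr)f^2\,\dd t .
\]
So it is enough to find $g\ge 0$ that makes the bracket harmless and the coefficient $-(wg)'-wg^2$ equal to $(3-\alpha)c^2 s^{(\alpha-4)/2}$.

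The key step is the choice of $\phi$. I would try the ansatz $\phi=t\,s^{\beta}$, which vanishes at $0$ like $f$ does. Then $g=\frac1t+\frac{2\beta t}{s}$, and with $a=\alpha/2$ a direct computation gives
\[
-(wg)'-wg^2=s^{a-2}\Bigl[-(2a+6\beta)s-4\beta(a-1+\beta)t^2\Bigr].
\]
Take $\beta=-\tfrac12$. The $s$-coefficient becomes $3-\alpha$, and the $t^2$-coefficient becomes $2(a-\tfrac32)=\alpha-3$. Substituting $s=t^2+c^2$, the $t^2$ contributions cancel exactly and we are left with $(3-\alpha)c^2s^{(\alpha-4)/2}$, which is precisely the weight on the right of \eqref{eq-hardy-offset}. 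Correspondingly $\phi=t/\sqrt{t^2+c^2}$, and
\[
g=\frac1t-\frac{t}{s}=\frac{c^2}{ts}>0 .
\]
This explicit cancellation is what makes the offset constant optimal, and I expect finding this ansatz to be the main step.

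It remains to handle the boundary terms. At $t=R$, the term $w g f^2\ge 0$ because $g>0$, so it can be dropped in the right direction. At $t=\delta\to 0$, we have $wgf^2=c^{\alpha-2}\,\frac{f(\delta)^2}{\delta}\,(1+o(1))$. Since $f\in C^1$ with $f(0)=0$, $f(\delta)^2\le \|f'\|_\infty^2\delta^2$, so this term is $O(\delta)\to 0$. The integrand $(3-\alpha)c^2s^{(\alpha-4)/2}f^2$ is bounded, so letting $\delta\to0$ yields \eqref{eq-hardy-offset}. The hypothesis $\alpha<3$ is used only to make the constant positive; the computation itself does not need $\alpha>0$.
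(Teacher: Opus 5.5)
Your proof is correct and is essentially the paper's. The paper rescales to $c=1$ and substitutes $t=\sinh s$, so its ground state $\phi=\tanh s$ is exactly your $\phi=t/\sqrt{t^2+c^2}$, and it then runs the same completed-square argument with the same treatment of the two boundary terms. (One harmless slip: near $t=0$ one has $wg\approx c^{\alpha}/t$, not $c^{\alpha-2}/t$, which does not affect your $O(\delta)$ conclusion.)
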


\begin{proof}
  By the change of variables $t=cs$ and $F(s)=f(cs)$, both sides of
  \eqref{eq-hardy-offset} are multiplied by the same factor
  $c^{\alpha-1}$.  It suffices to prove the case $c=1$. Let $t=\sinh s$, $S=\operatorname{arcsinh}R$, and $g(s)=f(\sinh s)$. Then the desired inequality is equivalent to
  \begin{equation}\label{eq-hardy-offset-2}
       \int_0^S |g'(s)|^2\cosh^{\alpha-1}s\,\dd s
    \ge
    (3-\alpha)
    \int_0^S g(s)^2\cosh^{\alpha-3}s\,\dd s .
  \end{equation}

  Set $w(s)=\cosh^{\alpha-1}s$ and $\phi(s)=\tanh s>0$. We have
    \[-\frac{(w\phi')'}{\phi}
    =(3-\alpha)\cosh^{\alpha-3}s.\]
  For $\delta\in(0,S)$, the non-negativity of the square gives
  \[
    0
    \le
    \int_\delta^S
    w\left|g'-\frac{\phi'}{\phi}g\right|^2\,\dd s .
  \]
  Expanding and integrating the mixed term by parts, we obtain
  \[
    \int_\delta^S w|g'|^2\,\dd s
    -
    \int_\delta^S
    \left(-\frac{(w\phi')'}{\phi}\right)g^2\,\dd s
    \ge
    \left[
      w\frac{\phi'}{\phi}g^2
    \right]_{\delta}^{S}.
  \]
  The boundary contribution at $S$ is non-negative. At the lower
  endpoint, since $g(0)=0$ and $g\in C^1([0,S])$, we have
  $g(s)=O(s)$, while $ \phi(s)\sim s,\,\phi'(s)\to1$ and $w(s)\to1$ as $s\downarrow0$. Hence
  \[
    w(\delta)\frac{\phi'(\delta)}{\phi(\delta)}g(\delta)^2
    =O(\delta)\to0 .
  \]
  Letting $\delta\downarrow0$ yields \eqref{eq-hardy-offset-2}. We complete the proof.
\end{proof}

\begin{rmk}
  The constant $(3-\alpha)$ in Lemma~\ref{lem-hardy-offset} is sharp.  Replacing the offset weight by
  the classical one $t^\alpha$ (as in the standard Hardy inequality)
  would yield only the constant $(1-\alpha)^2/4$, which is too weak
  to reach $\alpha=1$.
\end{rmk}

On the other hand, we establish the estimate for negative $\alpha$. Set
$a=-\alpha>0$.

\begin{lem}\label{lem-hardy-neg}
Let $a>0$, $c>0$, $R>0$, and $f\in C^1([0,R])$ with $f(0)=0$. Then
\begin{equation}\label{eq-hardy-neg}
 \int_0^R |f'(t)|^2(t^2+c^2)^{-a/2}\,\dd t
 \ge
 a\int_0^R f(t)^2(t^2+c^2)^{-(a+2)/2}\,\dd t.
\end{equation}
\end{lem}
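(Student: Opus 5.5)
The plan is to follow the ground-state (Picone-type) argument of Lemma~\ref{lem-hardy-offset}. Here, though, the computation can be done directly in the variable $t$, without rescaling and without the substitution $t=\sinh s$. Set
\[
w(t)=(t^2+c^2)^{-a/2},\qquad \phi(t)=t .
\]
The key identity is that $\phi$ is an exact supersolution with the desired potential:
\[
(w\phi')'=w'(t)=-a\,t\,(t^2+c^2)^{-(a+2)/2},
\qquad\text{so}\qquad
-\frac{(w\phi')'}{\phi}=a\,(t^2+c^2)^{-(a+2)/2},
\]
which is precisely the weight on the right-hand side of \eqref{eq-hardy-neg}. (As a consistency check, the substitution $t=c\sinh s$ turns the inequality into $\int |g'|^2\cosh^{-a-1}s\,\dd s\ge a\int g^2\cosh^{-a-1}s\,\dd s$ with ground state $\sinh s$, which is the same thing.)

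For $0<\delta<R$, I would start from
\[
0\le\int_\delta^R w\Bigl|f'-\frac{\phi'}{\phi}f\Bigr|^2\dd t .
\]
Expanding the square gives $w|f'|^2-2w\frac{\phi'}{\phi}ff'+w\frac{\phi'^2}{\phi^2}f^2$. I would write the mixed term as $-w\frac{\phi'}{\phi}(f^2)'$ and integrate it by parts. Using $\bigl(w\phi'/\phi\bigr)'=(w\phi')'/\phi-w\phi'^2/\phi^2$, the $w\phi'^2/\phi^2$ terms cancel, leaving
\[
\int_\delta^R w|f'|^2\,\dd t-\int_\delta^R\Bigl(-\frac{(w\phi')'}{\phi}\Bigr)f^2\,\dd t\ \ge\ \Bigl[w\frac{\phi'}{\phi}f^2\Bigr]_\delta^R .
\]

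The boundary terms are then handled separately. At $t=R$ the term $w(R)f(R)^2/R$ is nonnegative, so it can be dropped. At $t=\delta$, since $f(0)=0$ and $f\in C^1$, we have $f(\delta)=O(\delta)$, so $w(\delta)f(\delta)^2/\delta=O(\delta)\to0$. Letting $\delta\downarrow0$ and inserting the identity for $-(w\phi')'/\phi$ then yields \eqref{eq-hardy-neg}.

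The only step needing care is the lower endpoint limit. It is harmless here because $c>0$ keeps $w$ bounded near $0$, so the only singular factor is $1/\phi$, and it is compensated by $f(\delta)^2=O(\delta^2)$. All integrals are finite, since $f$ and $f'$ are bounded and $w$ is bounded on $[0,R]$.
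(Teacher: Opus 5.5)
Your proof is correct and is essentially the paper's argument. The paper rescales to $c=1$ and substitutes $t=\tan s$, but its ground state $\phi(s)=\tan s$ is exactly your $\phi(t)=t$, so your version is the same Picone computation done directly in $t$ without the change of variables.
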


\begin{proof}
By the change of variables $t=cs$ and $F(s)=f(cs)$, both sides of
\eqref{eq-hardy-neg} are multiplied by the same factor
$c^{-a-1}$.  It suffices to prove the case $c=1$. Let $t=\tan s$, $S=\arctan R$, and $g(s)=f(\tan s)$. Then the desired inequality is equivalent to
\begin{equation}\label{eq-hardy-neg-2}
     \int_0^S |g'(s)|^2\cos^{a+2}s\,\dd s
  \ge
     a \int_0^S g(s)^2\cos^a s\,\dd s .
\end{equation}

Set $w(s)=\cos^{a+2}s$ and $\phi(s)=\tan s>0$. We have
\[
 -\frac{(w\phi')'}{\phi}
 = a\cos^a s.
\]
For $\delta\in(0,S)$, the non-negativity of the square gives
\[
 0
 \le
 \int_\delta^S
 w\left|g'-\frac{\phi'}{\phi}g\right|^2\,\dd s .
\]
Expanding and integrating the mixed term by parts, we obtain
\[
 \int_\delta^S w|g'|^2\,\dd s
 -
 \int_\delta^S
 \left(-\frac{(w\phi')'}{\phi}\right)g^2\,\dd s
 \ge
 \left[
   w\frac{\phi'}{\phi}g^2
 \right]_{\delta}^{S}.
\]
The boundary contribution at $S$ is non-negative. At the lower
endpoint, since $g(0)=0$ and $g\in C^1([0,S])$, we have
$g(s)=O(s)$, while $ \phi(s)\sim s,\,\phi'(s)\to1$ and $w(s)\to1$ as $s\downarrow0$. Hence
\[
 w(\delta)\frac{\phi'(\delta)}{\phi(\delta)}g(\delta)^2
 =O(\delta)\to0 .
\]
Letting $\delta\downarrow0$ yields \eqref{eq-hardy-neg-2}. This completes the proof.
\end{proof}

\begin{lem}\label{lem-slice} 
Let $K\subset\R^n$ be an unconditional convex body, and let $u\in C^\infty(\operatorname{int}K\setminus\{0\})$ be unconditional with $D^2u\in L^2(K,|x|^\alpha\,dx)$, where either $0<\alpha<3$, or $-n<\alpha<0$. Then the following hold.
\begin{enumerate}[(i)]
\item  If $0<\alpha<3$, then for every $i=1,\dots,n$,
\begin{equation}\label{eq-slice-positive}
 \int_K u_{ii}^2|x|^\alpha\,dx
 \ge
 (3-\alpha)\int_K (|x|^2-x_i^2)\,u_i^2\,|x|^{\alpha-4}\,dx .
\end{equation}

\item If $-n<\alpha<0$, then for every $i=1,\dots,n$, 
\begin{equation}\label{eq-slice-negative}
 \int_K u_{ii}^2|x|^{\alpha}\,dx
 \ge -\alpha\int_K u_i^2 |x|^{\alpha-2}\,dx .
\end{equation}
\end{enumerate}
\end{lem}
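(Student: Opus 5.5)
The plan is to reduce both inequalities to the one-dimensional Hardy estimates, Lemma~\ref{lem-hardy-offset} and Lemma~\ref{lem-hardy-neg}, applied on coordinate lines in the $x_i$-direction. Fix $i$ and write $x=(x_i,x')$ with $x'\in\R^{n-1}$. Set $c=|x'|$, so that $|x|^2=x_i^2+c^2$. By unconditionality, $K$ is the union over $x'$ in the projection $K'$ of the segments $\{|x_i|\le R(x')\}$. By Fubini, and by the evenness of $u$ in $x_i$, it suffices to prove, for a.e.\ $x'$ with $c>0$, the one-dimensional inequality on $[0,R(x')]$ for
\[
f(t)=u_i(t,x').
\]
Since $u$ is even in $x_i$, its derivative $u_i$ is odd in $x_i$, so $f(0)=0$. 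Also $f'(t)=u_{ii}(t,x')$. The set $\{c=0\}$ is a line, of measure zero, and can be discarded.

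For (i), apply Lemma~\ref{lem-hardy-offset} with this $c$. Its right-hand side is
\[
(3-\alpha)\,c^2\int_0^R f^2\,(t^2+c^2)^{(\alpha-4)/2}\,dt,
\]
and $c^2(t^2+c^2)^{(\alpha-4)/2}=(|x|^2-x_i^2)|x|^{\alpha-4}$. Integrating over $x'\in K'$ (and doubling for $t<0$) gives \eqref{eq-slice-positive} directly. For (ii), apply Lemma~\ref{lem-hardy-neg} with $a=-\alpha$. Here $(t^2+c^2)^{-(a+2)/2}=|x|^{\alpha-2}$, so integrating over $x'$ gives \eqref{eq-slice-negative}. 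Note that Lemma~\ref{lem-hardy-neg} needs no upper bound on $a$, so the whole range $-n<\alpha<0$ is covered.

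The main obstacle is regularity. The hypotheses only give $u\in C^\infty(\operatorname{int}K\setminus\{0\})$ with $D^2u\in L^2$, while the lemmas are stated for $f\in C^1([0,R])$.
\begin{itemize}
\item For $c>0$ the open segment avoids the origin, so $f$ is smooth on $[0,R)$. I would apply the lemma on $[0,R-\delta]$ and let $\delta\downarrow0$ by monotone convergence. Both lemmas hold on any subinterval starting at $0$, since the boundary term at the right endpoint is nonnegative and was simply dropped.
\item The left side is finite for a.e.\ $x'$ by Fubini, because $u_{ii}^2|x|^\alpha$ is integrable.
\item If the left side of the target inequality is infinite, there is nothing to prove. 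Otherwise, the $x'$-integral of the right sides is controlled slice by slice, with no integrability assumption needed on the right, since the integrands are nonnegative and Tonelli applies.
\end{itemize}

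One further point needs checking: the proofs of both Hardy lemmas use $g(s)=O(s)$ at the left endpoint. For $c>0$ this holds because $f$ is $C^1$ near $t=0$ on that slice and $f(0)=0$. Thus no behaviour of $u$ near the origin (Lemma~\ref{lem-expansion}) is needed here.
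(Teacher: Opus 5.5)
Your proposal is correct and follows the paper's proof: slice in the $x_i$-direction, use oddness of $u_i$ to get $f(0)=0$, apply Lemma~\ref{lem-hardy-offset} (respectively Lemma~\ref{lem-hardy-neg} with $a=-\alpha$) with $c=|x'|$, and integrate over $x'$ by Tonelli. Your truncation to $[0,R(x')-\delta]$ followed by monotone convergence adds a detail the paper omits, since the paper applies the one-dimensional lemmas on the full slice without checking that $f$ is $C^1$ up to $t=R(x')$.
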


\begin{proof}
Fix $i$ and write $x=te_i+x'$ with $t=x_i\in\R$,
$x'\in e_i^\perp$. Since $K$ is unconditional, the coordinate
slice in the $x_i$-direction is a symmetric interval
$[-R_i(x'),R_i(x')]$.  Since $u$ is unconditional, $u_i$ is odd
in the $i$-th variable, hence $u_i(0,x')=0$.

For $x'\neq0$ with non-empty slice, define
$f(t)=u_i(te_i+x')$ on $[0,R_i(x')]$. The exceptional slice
$x'=0$ has zero measure in $e_i^\perp$ and is irrelevant for integration.

\begin{enumerate}[(i)]
\item 
Lemma~\ref{lem-hardy-offset} with $c=|x'|$ gives
\[
 \int_0^{R_i}
 u_{ii}(te_i+x')^2(t^2+|x'|^2)^{\alpha/2}\,\dd t
 \ge(3-\alpha)|x'|^2
 \int_0^{R_i}
 u_i(te_i+x')^2(t^2+|x'|^2)^{(\alpha-4)/2}\,\dd t.
\]
Integrating over $x'\in e_i^\perp$ and applying Tonelli's theorem gives
\eqref{eq-slice-positive}.

\item 
Lemma~\ref{lem-hardy-neg} with $c=|x'|$ and $a=-\alpha$ gives
\[
 \int_0^{R_i}
 u_{ii}(te_i+x')^2(t^2+|x'|^2)^{\alpha/2}\,\dd t
 \ge
 -\alpha
 \int_0^{R_i} u_i(te_i+x')^2(t^2+|x'|^2)^{(\alpha-2)/2}\,\dd t,
\]
Integrating over $x'\in e_i^\perp$ gives \eqref{eq-slice-negative}.

\end{enumerate}
\end{proof}

For $x\neq0$, decompose the gradient as
\[\nabla u=\partial_r u\,e_r+\nabla_T u,\]
where $e_r=\frac{x}{|x|}$ and $\partial_r u=\langle\nabla u,e_r\rangle$.

\begin{prop}\label{prop-tangential}
  Under the assumptions of Lemma~\ref{lem-slice}, the following estimates hold.
  \begin{enumerate}[(i)]
  \item If $0<\alpha<3$, then
  \begin{equation}\label{eq-tangential-pos}
    \int_K|\nabla_T u|^2|x|^{\alpha-2}\dd x
    \le \frac{2}{3-\alpha} \sum_{i=1}^n\int_K u_{ii}^2|x|^\alpha\dd x.
  \end{equation}

  \item If $-n<\alpha<0$, then
  \begin{equation}\label{eq-nabla-neg}
    \int_K|\nabla u|^2|x|^{\alpha-2}\dd x
    \le \frac{1}{-\alpha} \sum_{i=1}^n\int_K u_{ii}^2|x|^\alpha\dd x.
  \end{equation}
   \end{enumerate}
\end{prop}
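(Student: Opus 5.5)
The proof reduces to Lemma~\ref{lem-slice}: sum its slice estimates over $i=1,\dots,n$, and in case (i) compare the resulting weight with the tangential gradient through a pointwise algebraic inequality. All integrands involved are nonnegative. By Tonelli's theorem, summing and integrating the slice inequalities is legitimate even if some integrals on the left are a priori infinite; the right-hand sides are finite because $D^2u\in L^2(K,|x|^\alpha dx)$.

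\emph{Case (ii), $-n<\alpha<0$.} Summing \eqref{eq-slice-negative} over $i$ and using $\sum_i u_i^2=|\nabla u|^2$ gives
\[
\sum_{i=1}^n\int_K u_{ii}^2|x|^\alpha\,dx\ \ge\ -\alpha\int_K|\nabla u|^2|x|^{\alpha-2}\,dx .
\]
Dividing by $-\alpha>0$ yields \eqref{eq-nabla-neg}. Nothing more is needed here.

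\emph{Case (i), $0<\alpha<3$.} Summing \eqref{eq-slice-positive} over $i$ gives
\[
\sum_i\int_K u_{ii}^2|x|^\alpha\,dx\ \ge\ (3-\alpha)\int_K \Big(\sum_i(|x|^2-x_i^2)u_i^2\Big)|x|^{\alpha-4}\,dx .
\]
It then suffices to prove, for every $x\neq0$, the pointwise inequality
\[
|x|^2|\nabla_T u|^2\ \le\ 2\sum_{i=1}^n(|x|^2-x_i^2)u_i^2 .
\]
Multiplying it by $|x|^{\alpha-4}$, integrating, and combining with the display above gives \eqref{eq-tangential-pos} with the constant $2/(3-\alpha)$.

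This pointwise inequality is the only real step. To check it, use
\[
|x|^2|\nabla_Tu|^2=|x|^2|\nabla u|^2-(x\cdot\nabla u)^2 ,
\qquad
\sum_i(|x|^2-x_i^2)u_i^2=|x|^2|\nabla u|^2-\sum_i x_i^2u_i^2 .
\]
With these, the claim becomes
\[
2\sum_i x_i^2u_i^2\ \le\ |x|^2|\nabla u|^2+\Big(\sum_i x_iu_i\Big)^2 .
\]
Expand $|x|^2|\nabla u|^2=\sum_{i,j}x_i^2u_j^2$ and $(\sum_i x_iu_i)^2=\sum_{i,j}x_iu_ix_ju_j$. The diagonal terms $i=j$ on the right sum to exactly $2\sum_i x_i^2u_i^2$. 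The off-diagonal terms, grouped over pairs $i<j$, give
\[
x_i^2u_j^2+x_j^2u_i^2+2x_iu_ix_ju_j=(x_iu_j+x_ju_i)^2\ \ge0 .
\]
This proves the pointwise inequality and hence (i). I expect no real obstacle: once this algebraic identity is spotted, the rest is just summation and Tonelli.
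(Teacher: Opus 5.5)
Your proposal is correct and follows the paper's proof. Case (ii) is the summed slice estimate \eqref{eq-slice-negative}, and case (i) combines the summed \eqref{eq-slice-positive} with the pointwise bound $|x|^2|\nabla_T u|^2\le 2\sum_i(|x|^2-x_i^2)u_i^2$. The paper checks that bound via the Lagrange identity $|x|^2|\nabla_Tu|^2=\sum_{i<j}(x_iu_j-x_ju_i)^2$ together with $(a-b)^2\le 2a^2+2b^2$; this is the same algebra as your $(x_iu_j+x_ju_i)^2\ge 0$, since $2(a^2+b^2)-(a-b)^2=(a+b)^2$.
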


\begin{proof}
  The elementary identity
  \[
    |x|^2|\nabla_T u|^2
    =|x|^2|\nabla u|^2-(x\cdot\nabla u)^2
    =\sum_{1\le i<j\le n}(x_i u_j-x_j u_i)^2
  \]
  together with $(a-b)^2\le2a^2+2b^2$ yields
  \[
    |x|^2|\nabla_T u|^2
    \le2\sum_{i<j}\bigl(x_i^2u_j^2+x_j^2u_i^2\bigr)
    =2\sum_{i=1}^n\bigl(|x|^2-x_i^2\bigr)u_i^2.
  \]
  Multiplying by $|x|^{\alpha-4}$ and integrating gives
  \begin{equation}\label{est-grad-T}
       \int_K|\nabla_T u|^2|x|^{\alpha-2}\dd x
    \le2\sum_{i=1}^n
    \int_K\bigl(|x|^2-x_i^2\bigr)u_i^2|x|^{\alpha-4}\dd x.
  \end{equation}

  \begin{enumerate}[(i)]
  \item If $0<\alpha<3$, it follows from \eqref{est-grad-T} and \eqref{eq-slice-positive} that
  \[
    \int_K|\nabla_T u|^2|x|^{\alpha-2}\dd x
    \le\frac{2}{3-\alpha}\sum_{i=1}^n\int_K u_{ii}^2|x|^\alpha\dd x.
  \]

  \item If $-n<\alpha<0$, from \eqref{eq-slice-negative} we directly have
  \[
    \int_K|\nabla u|^2|x|^{-a-2}\dd x
    \le \frac{1}{a}\sum_{i=1}^n\int_K u_{ii}^2|x|^{-a}\dd x.
  \]
  \end{enumerate}
\end{proof}

\subsection{Nonnegativity, rigidity, and strict local log-concavity}\
\label{sec-nonnegative-logconcavity}

For $W_{\alpha}=-\alpha\log|x|$, we define
\begin{equation}\label{eq-I-alpha}
  I_\alpha(u)
  =\int_K\Bigl(
    \|D^2u\|_{\HS}^2
    +D^2 W_{\alpha}(\nabla u,\nabla u)
  \Bigr)\dd\tilde{\mu}_\alpha . 
\end{equation}

\begin{prop}\label{prop-energy}
  Let $K\in\mathcal{K}_{+}^n$ be unconditional, and let $u$ be
  the unconditional Neumann solution from Lemma~\ref{lem-neumann}. If
  $-n<\alpha\le1$, then $I_\alpha(u)\ge0$.
  Moreover, if $I_\alpha(u)=0$, then $u$ is constant on $K$, and
  consequently $\varphi = u_\nu = 0$ on $\partial K$.
\end{prop}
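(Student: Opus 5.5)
The plan is to split the weighted Bochner term using \eqref{eq-hessian-split}, use the diagonal Hessian entries to absorb the unfavourable part of $D^2W_\alpha(\nabla u,\nabla u)$ through Proposition~\ref{prop-tangential}, and read off rigidity from the terms that remain. First I would check that $u$ satisfies the hypotheses of Lemma~\ref{lem-slice}. By Lemma~\ref{lem-neumann}, $u$ is unconditional and smooth on $\operatorname{int}K\setminus\{0\}$. By Lemma~\ref{lem-expansion} (near the origin) together with the $W^{2,2}$ estimate on $K\setminus\overline{B_\varepsilon}$, we have $D^2u\in L^2(K,|x|^\alpha dx)$. The slice functions $f(t)=u_i(te_i+x')$ are only $C^1$ on the open slice, so I would apply Lemmas~\ref{lem-hardy-offset} and \ref{lem-hardy-neg} on $[0,R_i(x')-\delta]$ and let $\delta\downarrow0$. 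Throughout I use $\|D^2u\|_{\HS}^2=\sum_i u_{ii}^2+\sum_{i\ne j}u_{ij}^2$ and $|\nabla u|^2=|\partial_ru|^2+|\nabla_Tu|^2$.

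\emph{Case $\alpha=0$.} Here $I_0(u)=\int_K\|D^2u\|_{\HS}^2\,dx\ge0$. If $I_0(u)=0$, then $u$ is affine, and an unconditional affine function is constant.

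\emph{Case $0<\alpha\le1$.} By \eqref{eq-hessian-split},
\[
I_\alpha(u)=\int_K\Bigl(\sum_i u_{ii}^2+\sum_{i\ne j}u_{ij}^2\Bigr)|x|^\alpha\,dx-\alpha\int_K|\nabla_Tu|^2|x|^{\alpha-2}\,dx+\alpha\int_K|\partial_ru|^2|x|^{\alpha-2}\,dx .
\]
Applying \eqref{eq-tangential-pos} to the middle term gives
\[
I_\alpha(u)\ge\Bigl(1-\tfrac{2\alpha}{3-\alpha}\Bigr)\sum_i\int_K u_{ii}^2|x|^\alpha+\sum_{i\ne j}\int_K u_{ij}^2|x|^\alpha+\alpha\int_K|\partial_ru|^2|x|^{\alpha-2}.
\]
Since $2\alpha/(3-\alpha)\le1$ exactly when $\alpha\le1$, all three terms are nonnegative, so $I_\alpha(u)\ge0$.

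For rigidity in this case, $I_\alpha(u)=0$ forces $u_{ij}=0$ for $i\ne j$ and $\partial_ru=0$ on $K\setminus\{0\}$. Because the coefficient of $\sum_i u_{ii}^2$ vanishes when $\alpha=1$, I cannot conclude $u_{ii}=0$ directly. Instead, vanishing mixed partials give $u=\sum_i g_i(x_i)$. Then $\partial_ru=0$ gives $\sum_i x_ig_i'(x_i)=0$, and setting all coordinates but one to zero shows $g_i'\equiv0$. Hence $u$ is constant and $\varphi=u_\nu=0$.

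\emph{Case $-n<\alpha<0$.} Rewrite
\[
D^2W_\alpha(\nabla u,\nabla u)=\alpha\frac{|\nabla u|^2}{|x|^2}-2\alpha\frac{|\nabla_Tu|^2}{|x|^2}.
\]
By \eqref{eq-nabla-neg}, $-\alpha\int_K|\nabla u|^2|x|^{\alpha-2}\le\sum_i\int_K u_{ii}^2|x|^\alpha$. Therefore
\[
I_\alpha(u)\ge\sum_{i\ne j}\int_K u_{ij}^2|x|^\alpha+2|\alpha|\int_K|\nabla_Tu|^2|x|^{\alpha-2}\ge0 .
\]

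For rigidity in this case, equality gives $u_{ij}=0$ for $i\ne j$ and $\nabla_Tu=0$. So $u$ is both separable and radial, which forces $u=c|x|^2+\mathrm{const}$. Equality must also hold in the slice Hardy inequality of Lemma~\ref{lem-hardy-neg}, whose proof discards the nonnegative boundary term $\bigl[w\frac{\phi'}{\phi}g^2\bigr]$ at $s=S$. For $u_i=2cx_i$, this boundary term is strictly positive on every nontrivial slice unless $c=0$. Hence $u$ is constant and $\varphi=0$.

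The step I expect to be the main obstacle is rigidity, not the inequality itself. In the borderline case $\alpha=1$ the diagonal Hessian control is fully consumed, and in the case $\alpha<0$ the identification $u=c|x|^2$ must be combined with the equality analysis of the Hardy inequality, including its discarded boundary term. Justifying that equality passes through the approximation near the slice endpoints needs some care as well.
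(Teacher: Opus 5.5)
Your proof is correct. The inequality part is the paper's argument. You use the same splitting via \eqref{eq-hessian-split} and the same use of \eqref{eq-tangential-pos}, which gives the coefficient $\frac{3(1-\alpha)}{3-\alpha}$ for $0<\alpha\le1$. You also use the same rewriting $D^2W_\alpha(\nabla u,\nabla u)=\alpha|\nabla u|^2|x|^{-2}-2\alpha|\nabla_Tu|^2|x|^{-2}$ combined with \eqref{eq-nabla-neg} for $\alpha<0$.

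You differ from the paper in the rigidity step.

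For $0<\alpha\le1$ the paper splits into two cases. If $\alpha<1$, it also obtains $u_{ii}=0$, so $u$ is affine. If $\alpha=1$, it combines $\partial_ru=0$ with the equation $L_\alpha u=C_\varphi$ in polar coordinates, $r^{-2}\Delta_{\mathbb S^{n-1}}g=C_\varphi$, to force $C_\varphi=0$ and $g$ constant. Your argument instead uses $u_{ij}=0$ to get $u=\sum_i g_i(x_i)$. You then restrict $\partial_ru=0$ to the coordinate axes, which by unconditionality cover the full range of each coordinate, and get $g_i'\equiv0$. This treats the whole range $0<\alpha\le1$ uniformly and never uses the PDE, which is a small simplification.

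For $\alpha<0$ both routes reach $u=A+B|x|^2$ (your $c|x|^2+\mathrm{const}$). The paper then substitutes into \eqref{eq-I-alpha} and gets $I_\alpha(u)=4B^2(n+\alpha)\tilde{\mu}_\alpha(K)$, so $B=0$ because $n+\alpha=q>0$.

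Your detour through the equality case of Lemma~\ref{lem-hardy-neg} is valid. You correctly see that for linear $f$ the interior square $w|g'-\frac{\phi'}{\phi}g|^2$ vanishes identically, so only the endpoint term at $s=S$ can give strictness. The detour is unnecessary, though. The quantity you need to be positive is $\sum_i\int_K u_{ii}^2|x|^\alpha+\alpha\int_K|\nabla u|^2|x|^{\alpha-2}$. For $u=A+B|x|^2$ this equals $4B^2(n+\alpha)\tilde{\mu}_\alpha(K)$ directly.

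This also removes the approximation concern you raise at the end. In any case, $f(t)=2Bt$ is smooth up to the slice endpoint, so no truncation is needed for it.
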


\begin{proof}
  Lemma~\ref{lem-expansion} guarantees
  $D^2u\in L^2(K,\tilde{\mu}_\alpha)$, so all integrals are finite.
  Using \eqref{eq-hessian-split} and
  $\|D^2u\|_{\HS}^2=\sum_i u_{ii}^2+2\sum_{i<j}u_{ij}^2$, we have
  \[
    I_\alpha(u)
    =
    \sum_i\int_K u_{ii}^2\,\dd\tilde{\mu}_\alpha
    +2\sum_{i<j}\int_K u_{ij}^2\,\dd\tilde{\mu}_\alpha
    -\alpha\int_K\frac{|\nabla_Tu|^2}{|x|^2}\dd\tilde{\mu}_\alpha
    +\alpha\int_K\frac{|\partial_ru|^2}{|x|^2}\dd\tilde{\mu}_\alpha .
  \]

  \begin{enumerate}[(i)]
  \item \emph{Case $0<\alpha\le1$:}
  By \eqref{eq-tangential-pos}, we obtain
  \[
    I_\alpha(u)
    \ge
    \frac{3(1-\alpha)}{3-\alpha}
    \sum_{i=1}^n\int_K u_{ii}^2\,\dd\tilde{\mu}_\alpha
    + 2\sum_{i<j}\int_K u_{ij}^2\,\dd\tilde{\mu}_\alpha
    + \alpha\int_K\frac{|\partial_ru|^2}{|x|^2}\dd\tilde{\mu}_\alpha
    \ge0.
  \]
   Now suppose $I_\alpha(u)=0$.

  If $0<\alpha<1$, the coefficients above are strictly positive, so
  equality forces $u_{ii}=0$ for all $i$ and $u_{ij}=0$ for $i\neq j$.
  Hence $u$ is affine on $K\setminus\{0\}$. By unconditionality, the linear part must vanish, and hence $u$ is constant.

  If $\alpha=1$, $I_1(u)=0$ gives $u_{ij}=0$ for $i\neq j$ and $\partial_ru=0$.
  Hence $u(r,\theta)=g(\theta)$ on a small ball $B_{r_0}\subset K$.
  Then $L_Wu=C_\varphi$ reduces to $r^{-2}\Delta_{\mathbb{S}^{n-1}}g=C_\varphi$.
  Comparing two radii yields $C_\varphi=0$ and $\Delta g=0$, so $g$ is constant.
  Since $\partial_ru=0$, $u$ is constant in $K$.

  \item \emph{Case $-n<\alpha<0$:}
  By \eqref{eq-nabla-neg}, we get
  \[
    I_\alpha(u)
    \ge
    -2\alpha\int_K\frac{|\nabla_Tu|^2}{|x|^2}\dd\tilde{\mu}_\alpha
    +2\sum_{i<j}\int_Ku_{ij}^2\,\dd\tilde{\mu}_\alpha
    \ge0.
  \]
    If $I_\alpha(u)=0$, equality forces $\nabla_Tu=0$ and $u_{ij}=0$ for
  $i\neq j$. The condition $\nabla_Tu=0$ makes $u$ radial,
  $u=F(r)$. For a radial function,
  \[
    u_{ij}=\left(F''(r)-\frac{F'(r)}r\right)\frac{x_ix_j}{r^2},\qquad i\ne j.
  \]
  Hence $F''=F'/r$, so $u=A+B|x|^2$.
  Substituting into \eqref{eq-I-alpha} gives
  \[
    I_\alpha(u)=4B^2(n+\alpha)\int_K|x|^\alpha\dd x,
  \]
  where $n+\alpha>0$.  Since $I_\alpha(u)=0$ and
  $\tilde{\mu}_\alpha(K)>0$, we get $B=0$, so $u$ is constant.
  
  \item \emph{Case $\alpha=0$:}
  Then $I_0(u)=\int_K\|D^2u\|_{\HS}^2\dd x\ge0$.

  If $I_0(u)=0$, then $D^2u=0$ a.e., so $u$ is affine on
  $K\setminus\{0\}$. Unconditionality implies that the linear part vanishes, so $u$ is constant.
  \end{enumerate}

  We complete the proof.
\end{proof}

Let $-n<\alpha\le 1$. Let $K$ be an unconditional convex body in $\mathcal{K}_{+}^n$ with support function $h=h_K$. Consider an unconditional smooth perturbation with support function
\[
  h_s=h+s\psi,
  \qquad \psi\in C_{und}^2(\mathbb S^{n-1}),
\]
where $C^k_{und}(\mathbb{S}^{n-1})$ denotes the space of $C^k$ functions on $\mathbb{S}^{n-1}$ that are invariant under all coordinate reflections. Let $K_s$ be the corresponding convex body for $|s|$ sufficiently
small. The normal speed of the moving boundary at $s=0$ is $ \varphi=\psi\circ\nu_K$, where $\nu_K:\partial K\to\mathbb S^{n-1}$ is the Gauss map. Set
\[
  \tilde{\mu}(s)=\tilde{\mu}_\alpha(K_s)
  =\int_{K_s}|x|^\alpha\,\dd x .
\]

\begin{prop}\label{prop-local-logconcave-strict} 
  Under the above assumptions,
  \[
    \left.\frac{\dd^2}{\dd s^2}\right|_{s=0}
    \log \tilde{\mu}(s)\le 0 .
  \]
  Moreover, if $\psi\not\equiv 0$, the inequality is strict.
\end{prop}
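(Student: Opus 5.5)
Following Kolesnikov--Milman, I will reduce the claim to the singular weighted Reilly formula (Proposition~\ref{prop-singular-reilly}) combined with the nonnegativity of $I_\alpha$ (Proposition~\ref{prop-energy}).

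\emph{Step 1 (the target inequality).} By Lemma~\ref{lem-variation}, with the weight $|x|^\alpha$ (that is, $W=W_\alpha$ and $q=n+\alpha$),
\[
\tilde\mu'(0)=\int_{\partial K}\varphi\,\dd\tilde\mu_{\alpha,\partial K},\qquad
\tilde\mu''(0)=\int_{\partial K}\bigl(H_{W_\alpha}\varphi^2-\langle\II^{-1}\nabla_{\partial K}\varphi,\nabla_{\partial K}\varphi\rangle\bigr)\dd\tilde\mu_{\alpha,\partial K}.
\]
Since $(\log\tilde\mu)''=(\tilde\mu\tilde\mu''-(\tilde\mu')^2)/\tilde\mu^2$, and $(\tilde\mu')^2/\tilde\mu(K)=C_\varphi^2\tilde\mu_\alpha(K)$ by the definition of $C_\varphi$ in Lemma~\ref{lem-neumann}, it suffices to show
\[
\int_{\partial K}\bigl(H_{W_\alpha}\varphi^2-\langle\II^{-1}\nabla_{\partial K}\varphi,\nabla_{\partial K}\varphi\rangle\bigr)\dd\tilde\mu_{\alpha,\partial K}\le C_\varphi^2\tilde\mu_\alpha(K),
\]
with strict inequality when $\psi\not\equiv0$.

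\emph{Step 2 (Reilly formula and the boundary term).} Let $u$ be the unconditional Neumann solution of Lemma~\ref{lem-neumann} with data $\varphi$, which is unconditional because $\psi$ and $K$ are. Since $\varphi\in C^1$ only, I invoke the remark following that lemma. Proposition~\ref{prop-singular-reilly}, with $u_\nu=\varphi$, gives
\[
C_\varphi^2\tilde\mu_\alpha(K)=I_\alpha(u)+\int_{\partial K}H_{W_\alpha}\varphi^2\,\dd\tilde\mu_{\alpha,\partial K}+\int_{\partial K}\bigl(\II(\nabla_{\partial K}u,\nabla_{\partial K}u)-2\langle\nabla_{\partial K}\varphi,\nabla_{\partial K}u\rangle\bigr)\dd\tilde\mu_{\alpha,\partial K}.
\]
Because $\II>0$, completing the square pointwise gives
\[
\II(\nabla u,\nabla u)-2\langle\nabla\varphi,\nabla u\rangle\ge-\langle\II^{-1}\nabla\varphi,\nabla\varphi\rangle,
\]
with equality exactly when $\nabla_{\partial K}u=\II^{-1}\nabla_{\partial K}\varphi$. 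Combined with $I_\alpha(u)\ge0$ from Proposition~\ref{prop-energy}, this yields the inequality of Step~1, hence $(\log\tilde\mu)''(0)\le0$.

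\emph{Step 3 (strictness).} Suppose $(\log\tilde\mu)''(0)=0$. Then $I_\alpha(u)=0$, so the rigidity part of Proposition~\ref{prop-energy} gives that $u$ is constant and $\varphi=0$ on $\partial K$. Therefore $\psi\circ\nu_K=0$, and since the Gauss map of a $C^2_+$ body is onto $\mathbb S^{n-1}$, $\psi\equiv0$. This proves strictness.

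I expect the main obstacle to be regularity. The Reilly identity, and the trace of $\nabla_{\partial K}u$ used in the pointwise completion of the square, require $u\in W^{2,2}$ near $\partial K$ and must survive the approximation of $\varphi$. I would handle this by proving everything first for smooth $\psi$ and smooth $K$, using the $W^{2,2}(K\setminus\overline{B_\varepsilon})$ regularity from Lemma~\ref{lem-neumann}. I would then pass to the limit, noting that both sides of the Step~1 inequality are continuous in the $C^2$ data. Strictness needs more care, because the rigidity argument uses the exact solution rather than a limit. For this I would apply Proposition~\ref{prop-energy} directly to the weak solution for the given $C^1$ data $\varphi$, which is legitimate since Lemma~\ref{lem-expansion} still gives the required $D^2u\in L^2$.
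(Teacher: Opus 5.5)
Your proposal is correct and follows the paper's proof essentially step for step. You reduce to $\tilde\mu(0)\tilde\mu''(0)\le\tilde\mu'(0)^2$ via Lemma~\ref{lem-variation} and $\tilde\mu'(0)=C_\varphi\tilde\mu_\alpha(K)$, apply the singular Reilly formula (Proposition~\ref{prop-singular-reilly}) to the unconditional Neumann solution, drop $I_\alpha(u)\ge0$, complete the square using $\II>0$, and get strictness from the rigidity in Proposition~\ref{prop-energy} plus surjectivity of the Gauss map, exactly as the paper does (your closing regularity discussion only makes explicit what the paper delegates to the remark after Lemma~\ref{lem-neumann}).
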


\begin{proof}
  Since $\psi\in C^2_{und}(\mathbb{S}^{n-1})$ and $\nu_K$ is $C^1$, we
  have $\varphi\in C^1(\partial K)$. As noted after
  Lemma~\ref{lem-neumann}, the Neumann solution exists and satisfies
  the same conclusions as in the smooth case.

  It suffices to show $\tilde{\mu}(0)\tilde{\mu}''(0)\le\tilde{\mu}'(0)^2$.
  By Lemma~\ref{lem-variation},
  \[
    \tilde{\mu}(0)=\tilde{\mu}_\alpha(K),\quad\tilde{\mu}'(0)=C_\varphi\tilde{\mu}_\alpha(K),
  \]
  and
  \[
  \tilde{\mu}''(0)=\int_{\partial K}\bigl(H_{W_\alpha}\varphi^2-\mathrm{II}^{-1}(\nabla_{\partial K}\varphi,\nabla_{\partial K}\varphi)\bigr)\,d\tilde{\mu}_{\alpha,\partial K}.
  \]
  Applying the singular weighted Reilly formula \eqref{eq-reilly-final} and Proposition \ref{prop-energy}, we obtain
   \begin{align*}
    \frac{\tilde{\mu}'(0)^2}{\tilde{\mu}(0)}
    &=
    I_\alpha(u)
    +
    \int_{\partial K}
      H_{W_\alpha}\varphi^2\,
      \dd\tilde{\mu}_{\alpha,\partial K}  \\
    &\quad
    +
    \int_{\partial K}
      \mathrm{II}(\nabla_{\partial K}u,\nabla_{\partial K}u)\,
      \dd\tilde{\mu}_{\alpha,\partial K}
    -
    2\int_{\partial K}
      \big\langle
        \nabla_{\partial K}\varphi,
        \nabla_{\partial K}u
      \big\rangle
      \dd\tilde{\mu}_{\alpha,\partial K}  \\
    &\ge
    \int_{\partial K}
      H_{W_\alpha}\varphi^2\,
      \dd\tilde{\mu}_{\alpha,\partial K}
    +
    \int_{\partial K}
      \mathrm{II}(\nabla_{\partial K}u,\nabla_{\partial K}u)\,
      \dd\tilde{\mu}_{\alpha,\partial K}       \\
    &\quad
    -
    2\int_{\partial K}
      \big\langle
        \nabla_{\partial K}\varphi,
        \nabla_{\partial K}u
      \big\rangle
      \dd\tilde{\mu}_{\alpha,\partial K}.
  \end{align*}
  Using $\mathrm{II}>0$ for $K\in\mathcal{K}_{+}^n$, and applying the Cauchy-Schwarz inequality, we deduce
    \[
    \frac{\tilde{\mu}'(0)^2}{\tilde{\mu}(0)}
    \ge
    \int_{\partial K}
    \left(
      H_{W_\alpha}\varphi^2
      -
      \mathrm{II}^{-1}
      \bigl(
        \nabla_{\partial K}\varphi,
        \nabla_{\partial K}\varphi
      \bigr)
    \right)
    \dd\tilde{\mu}_{\alpha,\partial K}
    =\tilde{\mu}''(0).
  \]
  Moreover, if $\psi\not\equiv0$ and the inequality is not strict, then the equality implies $I_\alpha(u)=0$. Proposition~\ref{prop-energy} then forces $\varphi = u_\nu = 0$ on $\partial K$. Since $\varphi = \psi \circ \nu_K$ and the Gauss map $\nu_K$ is onto $\mathbb{S}^{n-1}$, we obtain $\psi \equiv 0$, a contradiction. Hence the inequality is strict.
\end{proof}

\subsection{Proof of Theorem~\ref{thm-unconditional}}
\label{sec-proof-unc-BM}

\begin{proof}[Proof of Theorem~\ref{thm-unconditional}]
  Put \(q=n+\alpha\). Then \(\alpha=q-n\in(-n,1]\), and $ \tV_q(\cdot)=\frac qn\,\tilde{\mu}_\alpha(\cdot).$
  Hence it suffices to prove the Brunn-Minkowski inequality for
  \(\tilde{\mu}_\alpha\).

  \emph{Smooth case and equality.}
  Assume first that \(K,L\) are unconditional \(C^2_+\) convex bodies.
  For \(t\in[0,1]\), set $K_t=(1-t)K+tL$.
  Each \(K_t\) is again unconditional and \(C^2_+\).  Fix \(t_0\in(0,1)\).
  In a neighbourhood of \(t_0\),
  \[
    h_{K_t}=h_{K_{t_0}}+(t-t_0)(h_L-h_K),
  \]
  so \(K_t\) is a smooth unconditional perturbation of \(K_{t_0}\).  Proposition~\ref{prop-local-logconcave-strict} therefore
  gives
  \[
    \frac{\mathrm d^2}{\mathrm dt^2}\Big|_{t=t_0}
    \log\tilde{\mu}_\alpha(K_t)\le0.
  \]
  Since \(t_0\in(0,1)\) is arbitrary,
  \(\log\tilde{\mu}_\alpha(K_t)\) is concave on \([0,1]\). Hence
  \begin{equation}\label{ineq-mu-0}
       \tilde{\mu}_\alpha((1-t)K+tL)
    \ge
    \tilde{\mu}_\alpha(K)^{1-t}\tilde{\mu}_\alpha(L)^t.
  \end{equation}
  
   Let $a=\tilde{\mu}_\alpha(K)^{\frac1q}$, $b=\tilde{\mu}_\alpha(L)^{\frac1q}$, and $t_0=\frac{b}{a+b}$. Let $\widehat K=\frac{K}{a}$ and $\widehat L=\frac{L}{b}$, so that
  \[
    \tilde{\mu}_\alpha(\widehat K)=\tilde{\mu}_\alpha(\widehat L)=1.
  \]
  Applying \eqref{ineq-mu-0} to \(\widehat K,\widehat L\) gives
  \[
    \tilde{\mu}_\alpha\!\left(\frac{K+L}{a+b}\right)
    =
    \tilde{\mu}_\alpha((1-t_0)\widehat K+t_0\widehat L)
    \ge 1.
  \]
  By the \((n+\alpha)\)-homogeneity of \(\tilde{\mu}_\alpha\), we obtain
  \[
    \tilde{\mu}_\alpha(K+L)^{\frac1q}
    \ge
    \tilde{\mu}_\alpha(K)^{\frac1q}
    +
    \tilde{\mu}_\alpha(L)^{\frac1q},
  \]
    which implies the desired inequality \eqref{unc-BM} for \(C^2_+\) unconditional bodies.

    Moreover, suppose equality holds in \eqref{unc-BM}. This is equivalent to
  \[
    \tilde{\mu}_\alpha((1-t_0)\widehat K+t_0\widehat L)=1.
  \]
  Define $G(t)=\log\tilde{\mu}_\alpha((1-t)\widehat K+t\widehat L)$ for $t\in[0,1]$.  As shown above, \(G\) is concave, and $G(0)=G(\lambda)=G(1)=0$. Hence \(G\equiv0\) on \([0,1]\). 
  If \(\widehat K\neq\widehat L\),
  Proposition~\ref{prop-local-logconcave-strict} gives
  \(G''(t_0)<0\) for every \(t_0\in(0,1)\), which is a contradiction.
  Therefore \(\widehat K=\widehat L\), so \(K\) and \(L\) are dilates of each other.
  The converse is immediate from the homogeneity of \(\tV_q\).
  
  \emph{Approximation.}
  For arbitrary unconditional convex bodies \(K,L\), standard
  approximation yields sequences \(K_j,L_j\) of unconditional \(C^\infty_+\)
  convex bodies such that
  \[
    K_j\to K,\qquad L_j\to L,\qquad
    K_j+L_j\to K+L
  \]
  in the Hausdorff metric, and
  \[
    \tilde{\mu}_\alpha(K_j)\to\tilde{\mu}_\alpha(K),\qquad
    \tilde{\mu}_\alpha(L_j)\to\tilde{\mu}_\alpha(L),\qquad
    \tilde{\mu}_\alpha(K_j+L_j)\to\tilde{\mu}_\alpha(K+L).
  \]
 
  Indeed, one may convolve the support function on \(O(n)\), symmetrize by coordinate reflections, and add a small Euclidean ball; the convergence of $\tilde{\mu}_\alpha$ follows from dominated convergence for \(\alpha\ge0\) and from polar coordinates for \(\alpha<0\). 
  Applying the smooth case to \(K_j,L_j\) and passing to
  the limit proves \eqref{unc-BM} for all unconditional convex bodies.

  The proof of Theorem~\ref{thm-unconditional} is complete.
\end{proof}

\section{Uniqueness consequences}
\label{sec-8}

In this final section, we derive uniqueness results for dual curvature
measures as applications of the Brunn-Minkowski inequalities established
in the previous sections.

Let $\tC_q(K,\cdot)$ be the $q$-th dual curvature measure of $K$ and,
for $p\in\R$, let
\[
  d\tC_{p,q}(K,u)=h_K(u)^{-p}\,d\tC_q(K,u)
\]
be the $(p,q)$-th dual curvature measure introduced by Lutwak-Yang-Zhang
\cite{LYZ18}. When $p=1$ and $K$ is strictly convex, the variational
formula gives
\begin{equation}\label{variation-dual-curvature}
  \left.\frac{d}{dt}\right|_{t=0}\tV_q(K_t)
  =q\int_{\mathbb S^{n-1}} f(u)\,d\tC_{1,q}(K,u),
\end{equation}
where $h_{K_t}=h_K+t f$ and $f\in C^2(\mathbb S^{n-1})$. See \cite[Theorem~3.1]{HLYZ16} or \cite[Section~3]{XZ22}.

A standard consequence of a Brunn-Minkowski inequality is the
corresponding Minkowski inequality.  Since the derivation is short and
will be used explicitly in the uniqueness proof, we include it here
together with the identification of the equality case.

\begin{lem}\label{lem-Minkowski}
  Let $0<q\le n+1$ and let $K,L$ be unconditional $C^2_+$ convex bodies.
  Then
  \begin{equation}\label{Mink-unc}
    \int_{\mathbb S^{n-1}} h_L(u)\,d\tC_{1,q}(K,u)
    \ge \tV_q(K)^{1-\frac1q}\,\tV_q(L)^{\frac1q},
  \end{equation}
  with equality if and only if $K$ and $L$ are dilates of each other.
\end{lem}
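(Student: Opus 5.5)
The plan is to derive the Minkowski inequality from the Brunn--Minkowski inequality of Theorem~\ref{thm-unconditional} by the usual first-variation argument along the Minkowski combination $K_t=K+tL$.

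\textbf{Step 1: the one-sided derivative.} For $t\ge0$ we have $h_{K_t}=h_K+t h_L$, and $K_t$ is unconditional and of class $C^2_+$. Theorem~\ref{thm-unconditional} gives
\[
\tV_q(K+tL)^{1/q}\ge \tV_q(K)^{1/q}+t\,\tV_q(L)^{1/q}.
\]
The two sides are equal at $t=0$. Differentiating at $t=0^+$ with \eqref{variation-dual-curvature}, where $f=h_L$, gives
\[
\frac1q\tV_q(K)^{\frac1q-1}\cdot q\int_{\mathbb S^{n-1}} h_L\,d\tC_{1,q}(K,u)\ \ge\ \tV_q(L)^{1/q}.
\]
This rearranges to \eqref{Mink-unc}. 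Here $h_L$ is only $C^2$ on the sphere, which is exactly what \eqref{variation-dual-curvature} requires.

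\textbf{Step 2: sufficiency of dilates.} If $L=\lambda K$, then \eqref{variation-dual-curvature} with $f=h_K$ and homogeneity of $\tV_q$ give
\[
\int h_K\,d\tC_{1,q}(K,\cdot)=\tfrac1q\,\tfrac{d}{dt}\big|_{t=0}\tV_q((1+t)K)=\tV_q(K).
\]
So both sides of \eqref{Mink-unc} equal $\lambda\tV_q(K)$.

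\textbf{Step 3: equality forces dilates (the main step).} Suppose equality holds in \eqref{Mink-unc}. Normalize, using homogeneity, so that $\tV_q(K)=\tV_q(L)=1$. Set $G(t)=\log\tilde\mu_\alpha((1-t)K+tL)$. The proof of Theorem~\ref{thm-unconditional} shows $G$ is concave on $[0,1]$ with $G(0)=G(1)=0$.

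Compute $G'(0^+)$ from \eqref{variation-dual-curvature} with $f=h_L-h_K$:
\[
G'(0^+)=q\Big(\int h_L\,d\tC_{1,q}(K,\cdot)-\tV_q(K)\Big)=0
\]
under equality. A concave function with $G(0)=0$ and $G'(0)=0$ satisfies $G\le 0$. Concavity together with $G(0)=G(1)=0$ gives $G\ge0$. Hence $G\equiv 0$.

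Now suppose $K\ne L$. Proposition~\ref{prop-local-logconcave-strict} applies at any $t_0\in(0,1)$ with perturbation direction $\psi=h_L-h_K\not\equiv0$, which is unconditional and $C^2$. It gives $G''(t_0)<0$, a contradiction. So $K=L$ after normalization, i.e.\ the original bodies are dilates.

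The point needing care is justifying the derivative computations. Differentiability of $G$ at interior points and the one-sided derivative at $0$ hold because $(1-t)K+tL$ stays in $C^2_+$ for $t$ in a neighborhood of $[0,1]$ (for $t$ slightly negative the support function $h_K+t(h_L-h_K)$ remains $C^2_+$). Therefore \eqref{variation-dual-curvature} applies directly at $t=0$.
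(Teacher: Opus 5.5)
Your proof is correct and matches the paper's for the inequality itself: both take the one-sided derivative of $t\mapsto\tV_q(K+tL)^{1/q}$ at $t=0$ via \eqref{variation-dual-curvature}. For the equality case, the paper uses concavity of $F(t)=\tV_q(K+tL)^{1/q}$ to get $\tV_q(L)^{1/q}=F'(0)\ge F(1)-F(0)\ge \tV_q(L)^{1/q}$, i.e.\ equality in \eqref{unc-BM}, and then quotes the equality case of Theorem~\ref{thm-unconditional}; you instead inline that same rigidity argument (concavity of $G$ plus the strict second variation of Proposition~\ref{prop-local-logconcave-strict}), which is an equally valid and only cosmetically different route.
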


\begin{proof}
  For $t\ge0$ set $K_t=K+tL$.  Its support function is
  $h_{K_t}=h_K+t h_L$, so we may apply the variational formula
  \eqref{variation-dual-curvature} with $f=h_L$.  Because
  $\frac{d}{dt}\tV_q(K_t)=q\int h_L\,d\tC_{1,q}(K_t)$, evaluating at
  $t=0$ gives
  \[
    \tV_q'(0):=\left.\frac{d}{dt}\right|_{t=0}\tV_q(K_t)
    =q\int_{\mathbb S^{n-1}} h_L\,d\tC_{1,q}(K).
  \]
  The Brunn-Minkowski inequality
  $\tV_q(K_t)^{\frac1q}\ge\tV_q(K)^{\frac1q}+t\tV_q(L)^{\frac1q}$
  (Theorem~\ref{thm-unconditional}) implies that the function
  $F(t)=\tV_q(K_t)^{\frac1q}$ satisfies
  $F(t)-F(0)\ge t\tV_q(L)^{\frac1q}$.  Since $K$ is $C^2_+$, $F$ is
  differentiable at $0$ and
  $F'(0)=\frac1q\tV_q(K)^{\frac1q-1}\tV_q'(0)$.  Hence
  \[
    \frac1q\tV_q(K)^{\frac1q-1}\cdot q\int h_L\,d\tC_{1,q}(K)
    \ge\tV_q(L)^{\frac1q},
  \]
  which is precisely \eqref{Mink-unc}.

  Now suppose equality holds in \eqref{Mink-unc}.  Then
  $F'(0)=F(1)-F(0)$, i.e.\ the linear bound for $F$ is attained at the
  derivative.  Because $F$ is concave on $[0,1]$ (a consequence of the
  Brunn-Minkowski inequality; see the proof of
  Theorem~\ref{thm-unconditional}), the only way this can happen is
  that $F$ is affine on $[0,1]$.  The equality case characterization
  of Theorem~\ref{thm-unconditional} (established in the proof above)
  forces $K$ and $L$ to be dilates.
  Conversely, if $L=\lambda K$, one verifies directly that both sides
  of \eqref{Mink-unc} equal $\lambda\tV_q(K)$, so equality holds.
\end{proof}

\begin{rmk}
  The same argument, using Theorem~\ref{thm-endpoint} instead of
  Theorem~\ref{thm-unconditional}, yields the Minkowski inequality
  \begin{equation}\label{Mink-endpoint}
    \int_{\mathbb S^{n-1}} h_L(u)\,d\tC_{1,n+2}(K,u)
    \ge \tV_{n+2}(K)^{1-\frac1{n+2}}\,\tV_{n+2}(L)^{\frac1{n+2}}
  \end{equation}
  for origin‑symmetric $C^2_+$ convex bodies $K,L$, again with equality
  exactly for dilates.
\end{rmk}

We also need the elementary identity that expresses the total mass of
$\tC_{1,q}$.

\begin{lem}\label{lem-totalmass}
  For any $K\in\cK_+^n$ and any $q>0$,
  \begin{equation}\label{totalmass}
    \int_{\mathbb S^{n-1}} h_K(u)\,d\tC_{1,q}(K,u)=\tV_q(K).
  \end{equation}
\end{lem}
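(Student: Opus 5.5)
The identity follows from the variational formula \eqref{variation-dual-curvature} applied to the dilation family $K_t=(1+t)K$, combined with the $q$-homogeneity of $\tV_q$.

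\emph{Setting up the family.} Take $f=h_K\in C^2(\mathbb S^{n-1})$; this is legitimate since $K\in\cK_+^n$. Then
\[
h_{K_t}=h_K+t h_K=(1+t)h_K,
\]
so $K_t=(1+t)K$ for $|t|$ small. This is an admissible smooth support-function perturbation, and it stays in $\cK_+^n$.

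\emph{Differentiating in two ways.} By $q$-homogeneity (which follows directly from \eqref{def-dual-q}, since $\rho_{(1+t)K}=(1+t)\rho_K$),
\[
\tV_q(K_t)=(1+t)^q\,\tV_q(K),
\qquad\text{so}\qquad
\left.\frac{d}{dt}\right|_{t=0}\tV_q(K_t)=q\,\tV_q(K).
\]
On the other hand, \eqref{variation-dual-curvature} with $f=h_K$ gives
\[
\left.\frac{d}{dt}\right|_{t=0}\tV_q(K_t)=q\int_{\mathbb S^{n-1}} h_K\,d\tC_{1,q}(K,u).
\]
Equating the two expressions and dividing by $q>0$ yields \eqref{totalmass}.

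\emph{Cross-check.} As an independent confirmation, one can use the explicit form of the dual curvature measure, $d\tC_q(K,\cdot)=\frac1n (\nu_K)_\#\bigl(\langle x,\nu\rangle\,|x|^{q-n}\,d\cH^{n-1}\bigr)$. Then $d\tC_{1,q}=h_K^{-1}\,d\tC_q$, so
\[
\int_{\mathbb S^{n-1}} h_K\,d\tC_{1,q}(K,u)=\tC_q(K,\mathbb S^{n-1})=\frac1n\int_{\partial K}\langle x,\nu\rangle|x|^{q-n}\,d\cH^{n-1}.
\]
The divergence theorem applied to the field $x|x|^{q-n}$, whose divergence is $q|x|^{q-n}$ and which is integrable near $0$ for $q>0$, gives
\[
\frac1n\int_{\partial K}\langle x,\nu\rangle|x|^{q-n}\,d\cH^{n-1}=\frac qn\int_K|x|^{q-n}\,dx=\tV_q(K)
\]
by \eqref{polar-dual-q}.

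There is essentially no obstacle here. The only point needing care is that \eqref{variation-dual-curvature} applies with $f=h_K$, which holds because $K\in\cK_+^n$ makes $h_K$ a $C^2$ function.
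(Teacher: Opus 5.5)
Your proposal is correct and matches the paper's proof: differentiate $\tV_q((1+t)K)$ at $t=0$ once by $q$-homogeneity and once by the variational formula \eqref{variation-dual-curvature} with $f=h_K$, then equate. Your divergence-theorem cross-check is also valid and gives a direct proof that bypasses the variational formula; the flux through small spheres $\partial B_\varepsilon$ vanishes like $\varepsilon^{q}$ because $q>0$.
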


\begin{proof}
  Scale $K$ by setting $K_t=(1+t)K$, so that $h_{K_t}=(1+t)h_K$.
  On one hand, $\tV_q(K_t)=(1+t)^q\tV_q(K)$; differentiating at $t=0$
  gives $\tV_q'(0)=q\tV_q(K)$.  On the other hand, the variational
  formula \eqref{variation-dual-curvature} with $f=h_K$ yields
  $\tV_q'(0)=q\int h_K\,d\tC_{1,q}(K)$.  Equating the two expressions
  proves \eqref{totalmass}.
\end{proof}

\begin{cor}\label{cor-uniqueness-1}
  Let $0<q\le n+1$ and let $K,L$ be unconditional $C^2_+$ convex
  bodies.  If
  \[
    \tC_{1,q}(K,\cdot)=\tC_{1,q}(L,\cdot),
  \]
  then $K=L$ unless $q=1$, in which case $K$ and $L$ are dilates of each other.
\end{cor}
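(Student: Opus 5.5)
The plan is to combine the Minkowski inequality of Lemma~\ref{lem-Minkowski} with the total mass identity of Lemma~\ref{lem-totalmass}, in the standard way.

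\emph{Step 1: apply the Minkowski inequality in both directions.} Assume $\tC_{1,q}(K,\cdot)=\tC_{1,q}(L,\cdot)$. Integrating $h_L$ against this common measure and using Lemma~\ref{lem-totalmass} for $L$ gives
\[
\tV_q(L)=\int_{\mathbb S^{n-1}} h_L\,d\tC_{1,q}(L,u)=\int_{\mathbb S^{n-1}} h_L\,d\tC_{1,q}(K,u)\ge \tV_q(K)^{1-\frac1q}\tV_q(L)^{\frac1q}.
\]
This simplifies to $\tV_q(L)^{1-\frac1q}\ge \tV_q(K)^{1-\frac1q}$. Exchanging the roles of $K$ and $L$ gives the reverse inequality.

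\emph{Step 2: the case $q\neq1$.} Here the exponent $1-\frac1q$ is nonzero, so $\tV_q(K)=\tV_q(L)$. Then equality holds in the displayed inequality. The equality case of Lemma~\ref{lem-Minkowski} therefore gives $L=\lambda K$ for some $\lambda>0$.

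Homogeneity of $\tC_{1,q}$ pins down $\lambda$. From $\tC_q(\lambda K,\cdot)=\lambda^q\tC_q(K,\cdot)$ and $h_{\lambda K}=\lambda h_K$ we get $\tC_{1,q}(\lambda K,\cdot)=\lambda^{q-1}\tC_{1,q}(K,\cdot)$. Alternatively, $\tV_q(\lambda K)=\lambda^q\tV_q(K)=\tV_q(K)$ forces $\lambda=1$ directly. Either way, $K=L$.

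\emph{Step 3: the case $q=1$.} Step 1 yields no information on volumes, since both sides become trivial. Instead, apply Lemma~\ref{lem-totalmass} and Lemma~\ref{lem-Minkowski} to see that
\[
\int h_L\,d\tC_{1,1}(K,u)=\tV_1(L)=\tV_1(K)^{0}\tV_1(L)^{1},
\]
so equality holds in \eqref{Mink-unc}. Hence $K$ and $L$ are dilates. Conversely, the measure $\tC_{1,1}$ is $0$-homogeneous, so dilates genuinely share the same measure; this is why dilates is the correct conclusion when $q=1$.

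\emph{Main obstacle.} The only delicate point is checking the scaling behaviour of $\tC_{1,q}$, together with the precise equality characterization in Lemma~\ref{lem-Minkowski}. Both are already available from earlier results, so the argument is essentially bookkeeping.
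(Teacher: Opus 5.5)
Your proposal is correct and follows essentially the same route as the paper. You apply Lemma~\ref{lem-Minkowski} in both directions, use Lemma~\ref{lem-totalmass} to get $\tV_q(K)=\tV_q(L)$ for $q\neq1$, invoke the equality case to obtain $L=\lambda K$ and fix $\lambda=1$ by homogeneity (your variant via $\lambda^q\tV_q(K)=\tV_q(K)$ is equally valid), and for $q=1$ use equality in a single Minkowski inequality to force dilates.
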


\begin{proof}
  From Lemma~\ref{lem-Minkowski} we have the two inequalities
  \[
    \int h_L\,d\tC_{1,q}(K) \ge \tV_q(K)^{1-\frac1q}\tV_q(L)^{\frac1q}, \qquad
    \int h_K\,d\tC_{1,q}(L) \ge \tV_q(L)^{1-\frac1q}\tV_q(K)^{\frac1q}.
  \]
  Set $A=\tV_q(K)^{1/q}$ and $B=\tV_q(L)^{1/q}$.  By the assumption
  $\tC_{1,q}(K)=\tC_{1,q}(L)$ and Lemma~\ref{lem-totalmass}, the
  left-hand sides equal $B^q$ and $A^q$, respectively. Hence
  \[
    B^q \ge A^{q-1}B,\qquad A^q \ge B^{q-1}A.
  \]
  For every $q\neq1$, dividing by $A,B>0$ gives
  $B^{q-1}=A^{q-1}$, and since the power function is strictly
  monotone on $(0,\infty)$, we obtain $A=B$. Therefore equality holds
  in both Minkowski inequalities, and the equality case of
  Lemma~\ref{lem-Minkowski} yields $L=\lambda K$ for some
  $\lambda>0$.  Substituting into $\tC_{1,q}(K)=\tC_{1,q}(\lambda K)$
  and using the homogeneity
  $d\tC_{1,q}(\lambda K,u)=\lambda^{q-1}\,d\tC_{1,q}(K,u)$ gives  $\lambda^{q-1}=1$, hence $\lambda=1$. Thus $K=L$.

  If $q=1$, the two inequalities above become identities and give no
  information about $A$ and $B$.  However, by the first Minkowski
  inequality together with the assumption and Lemma~\ref{lem-totalmass}, equality must hold in that Minkowski inequality, so its equality case gives that $K$ and $L$ are dilates of each other.
  Since $\tC_{1,1}$ is dilation-invariant, this is optimal and the
  dilation factor is arbitrary.
\end{proof}

\begin{cor}\label{cor-uniqueness-endpoint}
  Let $K,L$ be origin‑symmetric $C^2_+$ convex bodies and suppose
  \[
    \tC_{1,n+2}(K,\cdot)=\tC_{1,n+2}(L,\cdot).
  \]
  Then $K=L$.
\end{cor}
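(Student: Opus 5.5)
The argument follows Corollary~\ref{cor-uniqueness-1} verbatim, with Lemma~\ref{lem-Minkowski} replaced by the endpoint Minkowski inequality \eqref{Mink-endpoint}. First I make sure \eqref{Mink-endpoint} and its equality case are available for origin-symmetric $C^2_+$ bodies. For $t\ge0$ set $K_t=K+tL$, which is again origin-symmetric. By Theorem~\ref{thm-endpoint}, $F(t)=\tV_{n+2}(K_t)^{1/(n+2)}$ is concave on $[0,1]$, since for origin-symmetric bodies the Brunn--Minkowski inequality is equivalent to $1/(n+2)$-concavity (Lemma~\ref{lem-equivalent}). Differentiating at $t=0$ via \eqref{variation-dual-curvature} with $f=h_L$ gives \eqref{Mink-endpoint}.

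Next I handle the equality case of \eqref{Mink-endpoint}. If equality holds, then $F'(0)=F(1)-F(0)$. Concavity then forces $F$ to be affine on $[0,1]$, so equality holds in \eqref{endpoint-BM} for $K$ and $L$. The equality statement of Theorem~\ref{thm-endpoint}, inherited from Hadwiger's Theorem~\ref{thm-Hadwiger}, then shows that $K$ and $L$ are dilates.

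Now assume $\tC_{1,n+2}(K,\cdot)=\tC_{1,n+2}(L,\cdot)$ and write $q=n+2$, $A=\tV_q(K)^{1/q}$, $B=\tV_q(L)^{1/q}$. Applying \eqref{Mink-endpoint} to $(K,L)$ and to $(L,K)$ and using Lemma~\ref{lem-totalmass}, the left-hand sides become $\int h_L\,d\tC_{1,q}(L)=B^q$ and $\int h_K\,d\tC_{1,q}(K)=A^q$. This yields $B^q\ge A^{q-1}B$ and $A^q\ge B^{q-1}A$, that is, $B^{q-1}\ge A^{q-1}$ and $A^{q-1}\ge B^{q-1}$. Since $q-1=n+1>0$, we get $A=B$, so equality holds in \eqref{Mink-endpoint}. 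By the equality case, $L=\lambda K$ for some $\lambda>0$. The homogeneity $\tC_{1,q}(\lambda K,\cdot)=\lambda^{q-1}\tC_{1,q}(K,\cdot)$ then gives $\lambda^{n+1}=1$, hence $\lambda=1$ and $K=L$.

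The main obstacle is the equality case of \eqref{Mink-endpoint}, because it rests on the equality characterization in Hadwiger's inequality. That characterization was only sketched in the proof of Theorem~\ref{thm-Hadwiger}, through equality in the lifted Brunn--Minkowski inequality in $\R^{n+2}$. Everything else is formal. If that step needs reinforcement, I would argue directly: with $A=B$, equality in the Minkowski inequality together with concavity of $F$ gives equality in \eqref{endpoint-BM}, and Theorem~\ref{thm-endpoint} as stated then yields dilates. So I will rely on Theorem~\ref{thm-endpoint}'s stated equality case rather than reprove it.
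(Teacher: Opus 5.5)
Your proof is correct and follows the paper's route: the paper says the argument of Corollary~\ref{cor-uniqueness-1} carries over verbatim, using \eqref{Mink-endpoint} and Lemma~\ref{lem-totalmass} with $q=n+2$, and that is what you do. You also spell out how \eqref{Mink-endpoint} and its equality case follow from Theorem~\ref{thm-endpoint}, which the paper only asserts in a remark; no special case arises because $q-1=n+1>0$.
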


\begin{proof}
  The argument is identical to that of
  Corollary~\ref{cor-uniqueness-1}, now using the Minkowski inequality
  \eqref{Mink-endpoint} together with Lemma~\ref{lem-totalmass} for $q=n+2$.
\end{proof}

\begin{rmk}
  More general uniqueness statements for $(p,q)$-th dual curvature measures can be obtained by combining the $L_p$ variational formula
  with the corresponding $L_p$ Brunn-Minkowski inequalities.  In the range $p\ge q$ such uniqueness results were established by Xi-Zhang \cite{XZ22}.  
  In contrast, for the case $p=0$, Xiong and Yang
  \cite[Theorem~1.2]{XiongY26} proved the non-uniqueness results for origin-symmetric convex bodies when $q>n$.
\end{rmk}

\begin{ack}
	  H. Li and Y. Zhang were supported by NSFC grant No. 12471047.
    Y. Wan was supported by Hong Kong RGC grant (Early Career Scheme) of Hong Kong No. 24304222 and No. 14300623, and NSFC grant No. 12222122.
\end{ack}

\end{document}